\documentclass[11pt,letterpaper]{article}

\usepackage[margin=1in]{geometry}
\usepackage[affil-it]{authblk}
\usepackage{amsmath,amssymb,amsthm,mathtools}
\usepackage{bbm}
\usepackage{enumitem}
\usepackage{microtype}
\usepackage{xcolor}
\usepackage{aliascnt}
\usepackage[colorlinks=true,linkcolor=blue!55!black,citecolor=green!45!black,urlcolor=blue!65!black]{hyperref}
\usepackage[capitalize,noabbrev]{cleveref}

\usepackage{tikz}
\usetikzlibrary{positioning,arrows.meta}

\allowdisplaybreaks
\setlist[itemize]{leftmargin=1.6em,itemsep=0.25em,topsep=0.4em}
\setlist[enumerate]{leftmargin=1.8em,itemsep=0.25em,topsep=0.4em}

\newtheorem{theorem}{Theorem}[section]
\newaliascnt{proposition}{theorem}
\newtheorem{proposition}[proposition]{Proposition}
\aliascntresetthe{proposition}
\newaliascnt{lemma}{theorem}
\newtheorem{lemma}[lemma]{Lemma}
\aliascntresetthe{lemma}
\newaliascnt{corollary}{theorem}
\newtheorem{corollary}[corollary]{Corollary}
\aliascntresetthe{corollary}
\newaliascnt{claim}{theorem}

\aliascntresetthe{claim}
\theoremstyle{definition}
\newaliascnt{definition}{theorem}

\aliascntresetthe{definition}
\theoremstyle{remark}
\newaliascnt{remark}{theorem}

\aliascntresetthe{remark}

\newcommand{\R}{\mathbb{R}}
\newcommand{\E}{\mathbb{E}}
\newcommand{\Pp}{\mathbb{P}}
\newcommand{\1}{\mathbbm{1}}
\newcommand{\op}{\mathrm{op}}

\newcommand{\KL}{\mathrm{D}_{\mathrm{KL}}}
\newcommand{\Cov}{\operatorname{Cov}}
\newcommand{\Var}{\operatorname{Var}}
\newcommand{\Tr}{\operatorname{Tr}}
\newcommand{\diag}{\operatorname{diag}}

\newcommand{\norm}[1]{\lVert #1 \rVert}
\newcommand{\dd}{\mathrm{d}}

\title{Optimal Covariance Inflation under Gaussian Tilts}
\author[1,2]{Minbo Gao\thanks{\href{mailto:gmb17@tsinghua.org.cn}
  {\texttt{gmb17@tsinghua.org.cn}}}}
\author[3]{Zhengfeng Ji\thanks{\href{mailto:jizhengfeng@tsinghua.edu.cn}
  {\texttt{jizhengfeng@tsinghua.edu.cn}}}}
\author[1,2]{Chenghua Liu\thanks{\href{mailto:liuch.russell@gmail.com}
  {\texttt{liuch.russell@gmail.com}}}}
\affil[1]{
  Institute of Software, Chinese Academy of Sciences, Beijing, China
}
\affil[2]{
  University of Chinese Academy of Sciences, Beijing, China
}
\affil[3]{
  Department of Computer Science and Technology, Tsinghua University,
  Beijing, China
}
\date{}

\begin{document}

\maketitle

\begin{abstract}
Covariance-sensitive analyses of Gaussian annealing for sampling from a convex
body require controlling how much covariance can grow under a radial Gaussian
tilt.
For an isotropic convex body $K\subseteq\R^n$, let
$\mu_{K,t}(\dd x) \propto e^{-t\norm{x}^2}\1_K(x)\,\dd x$,
and let $Q_n$ be the supremum of $\norm{\Cov(\mu_{K,t})}_{\op}$ over all
such $K$ and all $t>0$.
We prove the sharp bound $Q_n=\Theta(n^{2/5})$, closing the gap between the
known $\Omega(n^{1/3})$ lower bound and the
$O(\sqrt{n\log(en)})$ upper bound.
The upper bound applies not only to uniform measures on convex bodies but to
every compactly supported isotropic logconcave probability measure.
It combines a dimension-free variance bound for quadratic forms with a
R\'enyi comparison at a nearby time, projected moment estimates, and
relative-entropy control along the Gaussian-tilt path.
For the matching lower bound, we construct an explicit unconditional convex
body whose axial coordinate is coupled to the transverse quadratic energy.
Moderate-deviation estimates show that an appropriate tilt creates
directional variance $\Omega(n^{2/5})$.


\end{abstract}

\newpage
\section{Introduction}\label{sec:introduction}
Uniform sampling from a convex body given only by a membership oracle is a
canonical problem in randomized algorithms and a basic primitive for volume
estimation, integration, and related high-dimensional computations
\cite{DyerFriezeKannan1991,ApplegateKannan1991}.
A central algorithmic paradigm is to reach the target distribution by
annealing through a sequence of intermediate logconcave measures, an approach
developed through the random-walk, conductance, and simulated-annealing
frameworks for convex bodies
\cite{LovaszSimonovits1993,KannanLovaszSimonovits1997,
LovaszVempala2006,LovaszVempala2007}.
The efficiency of such a scheme depends critically on how these intermediate
targets are spaced.
If consecutive targets are too far apart, a sample from
one may no longer provide a sufficiently warm start for the next; if they are
too close, the algorithm pays for too many stages.
Thus, beyond the cost of
sampling from any single target, one must understand how rapidly the
distribution can change along the entire annealing path.

Gaussian cooling gives a particularly clean realization of this paradigm
\cite{CousinsVempala2015,CousinsVempala2018}.
Starting from a sharply
concentrated Gaussian restricted to the body, one gradually removes the
quadratic penalty until reaching the uniform distribution.
The Proximal Sampler, which calls a restricted Gaussian oracle (RGO), provides
a natural way to sample from the intermediate logconcave targets
\cite{LeeShenTian2021,ChenChewiSalimWibisono2022,
KookVempala2025Cold,KookVempala2026Zeroth}.
In recent covariance-sensitive analyses, the warmness of one target relative
to the next is quantified by order-$q$ R\'enyi divergence.
Controlling this divergence along the Gaussian path reduces to controlling
radial fluctuations, which in turn depend on the covariance spectrum of the
intermediate measures
\cite{KookVempala2025Cold,KookVempala2026Zeroth}.
For an isotropic body, the uniform measure has variance one in every
direction, and each intermediate target is a radial Gaussian tilt of this
measure.
The annealing analysis requires an upper bound on the largest directional
variance.
This leads to the following stability question:
\begin{center}
  \emph{How much can the largest directional variance grow under a radial
  Gaussian tilt of an initially isotropic distribution?}
\end{center}

There is a simple geometric reason why the answer need not be
dimension-free.
Although multiplication by $e^{-t\norm{x}^2}$ favors points
closer to the origin and decreases the total quadratic energy, it need not
decrease the variance in every direction.
A convex body may contain a long
axial direction that is accessible only on transverse slices with atypically
small quadratic energy.
Under the uniform measure, such slices may carry very little mass, so the
axial marginal remains narrow.
A radial Gaussian tilt
disproportionately favors precisely these low-energy slices, thereby exposing
the long direction while shrinking many transverse directions.
Hence, the total second moment can decrease even as one covariance eigenvalue
grows.
The relevant issue is therefore not whether the tilt contracts the distribution
globally, but how much anisotropy it can create while doing so.

To make this question precise, let $K\subseteq\R^n$ be an isotropic convex
body, let $\mu_K$ denote its uniform probability measure, and for $t\geq 0$
define
\begin{equation}\label{eq:intro-tilt}
  \dd\mu_{K,t}(x)
  :=Z_{K,t}^{-1}e^{-t\norm{x}^2}\,\dd\mu_K(x),
\end{equation}
where $Z_{K,t}$ is the normalizing constant.
We consider the largest directional variance that can appear anywhere along
this path:
\begin{equation}\label{eq:intro-Qn}
  Q_n
  :=
  \sup_{\substack{K\subseteq\R^n\\
                  K\ \mathrm{isotropic}}}
  \ \sup_{t>0}
  \norm{\Cov(\mu_{K,t})}_{\op}.
\end{equation}
This quantity has a direct algorithmic interpretation.
In covariance-sensitive analyses of Gaussian cooling, particularly the
order-preserving annealing scheme of Kook and Vempala
\cite{KookVempala2025Cold,KookVempala2026Zeroth}, an upper bound on $Q_n$
controls how much the tilt parameter $t$ may change between consecutive
targets and therefore controls the covariance-dependent contribution to the
number of queries to the membership oracle.

However, the asymptotic behavior of $Q_n$ was previously unresolved.
Kook and Vempala conjectured that covariance should remain stable up to a
universal constant under quadratic tilts
\cite[Section~1.2.2]{KookVempala2025Cold}.
This would have bounded the largest covariance eigenvalue by a universal
constant throughout the path.
Subsequent work disproved such dimension-free stability and established the
following bounds
\cite[Proposition~6.3 and Lemma~4.3]{KookVempala2026Zeroth}:
\begin{equation*}
  \Omega(n^{1/3})
  \leq
  Q_n
  \leq
  O\!\left(\sqrt{n\log(en)}\right).
\end{equation*}
Related geometric constructions had already indicated that a quadratic tilt
can reveal substantial variance in a distinguished direction
\cite[Proposition~44]{Bizeul2026}.
Thus, an initially isotropic body can become polynomially anisotropic under a
radial Gaussian reweighting, but the correct scale of this instability
remained open.

Our main result determines this scale exactly:
$Q_n=\Theta(n^{2/5})$.
The upper bound in fact holds more generally for every compactly supported
isotropic logconcave probability measure, not only for uniform measures on
convex bodies.
The matching lower bound is witnessed by an explicit
\emph{unconditional} isotropic convex body, showing that covariance inflation
of order $n^{2/5}$ persists even under independent coordinate-sign
symmetries.
Thus, radial Gaussian reweighting can create polynomial
anisotropy from an isotropic starting law, and $n^{2/5}$ is the sharp
worst-case scale of this phenomenon.


\subsection{Main results}

\begin{theorem}
\label{thm:main-covariance}
There exist universal constants $c,C>0$ such that the following hold.

\begin{enumerate}
  \item[\textup{(i)}]
  Let $\mu_0$ be any compactly supported isotropic logconcave probability
  measure on $\R^n$, and for $t\geq 0$ let
  $\dd\mu_t(x)\propto e^{-t\norm{x}^2}\,\dd\mu_0(x)$.
  Then, for every $t\geq 0$,
  \begin{equation*}
    \norm{\Cov(\mu_t)}_{\op}
    \leq
    Cn^{2/5}.
  \end{equation*}

  \item[\textup{(ii)}]
  For every sufficiently large $n$, there exist an unconditional isotropic
  convex body $K_n\subseteq\R^n$ and a precision $t_n>0$ such that
  \begin{equation*}
    \norm{\Cov(\mu_{K_n,t_n})}_{\op}
    \geq
    cn^{2/5}.
  \end{equation*}
\end{enumerate}

Consequently, $Q_n=\Theta(n^{2/5})$.
\end{theorem}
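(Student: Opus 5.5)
The two parts are proved separately. For (i), I would use the dimension‑free estimate $\norm{\Cov(\mu_t)}_{\op}\le 1/(2t)$ for large $t$ and a relative‑entropy comparison with $\mu_0$ for small $t$, balancing at $t\asymp n^{-2/5}$. For (ii), I would build a body in which the long axial direction is open only on transverse slices of atypically small energy, and tune the tilt so that it selects those slices.

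\emph{Upper bound.} Fix a unit vector $\theta$ and a threshold $T=n^{-2/5}$.
\begin{itemize}
\item \emph{Large $t$.} For $t\ge T$, $\mu_t$ is $2t$‑strongly logconcave, since it is logconcave times $e^{-t\norm{x}^2}$. Brascamp--Lieb then gives $\Cov(\mu_t)\preceq (2t)^{-1}I\preceq n^{2/5}I$.
\item \emph{Small $t$.} For $t\le T$, I would bound $\Var_{\mu_t}\langle\theta,x\rangle\le \E_{\mu_t}\langle\theta,x\rangle^2$. Under $\mu_0$ the marginal $\langle\theta,x\rangle$ is isotropic logconcave, so $\E_{\mu_0}e^{c|\langle\theta,x\rangle|}\le 2$. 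The Donsker--Varadhan variational formula then yields
\[
\E_{\mu_t}|\langle\theta,x\rangle|\lesssim 1+\KL(\mu_t\|\mu_0).
\]
\item \emph{Upgrading to second moments.} To pass from first to second moments I would use reverse Hölder for the logconcave marginal of $\mu_t$, or the same argument with $e^{c|\langle\theta,x\rangle|}$ and level sets. This gives $\E_{\mu_t}\langle\theta,x\rangle^2\lesssim (1+\KL(\mu_t\|\mu_0))^2$.
\item \emph{Entropy along the path.} Along the tilt path,
\[
\KL(\mu_t\|\mu_0)=-t\E_{\mu_t}\norm{x}^2-\log Z_t\le t\bigl(\E_{\mu_0}\norm{x}^2-\E_{\mu_t}\norm{x}^2\bigr)=t\int_0^t\Var_{\mu_s}(\norm{x}^2)\,\dd s .
\]
If $\Var_{\mu_s}(\norm{x}^2)\lesssim n$ for $s\le T$, then $\KL\lesssim T^2 n=n^{1/5}$. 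Hence $\E_{\mu_t}\langle\theta,x\rangle^2\lesssim n^{2/5}$, which is exactly the claimed scale.
\end{itemize}

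The main obstacle is the bound $\Var_{\mu_s}(\norm{x}^2)\lesssim n$ along the path. The dimension‑free variance bound for quadratic forms gives roughly $\Var_{\mu_s}(\norm{x}^2)\lesssim \norm{\Cov(\mu_s)}_{\op}\Tr\Cov(\mu_s)$. We have $\Tr\Cov(\mu_s)\le\E_{\mu_s}\norm{x}^2\le n$, but the operator‑norm factor makes a naive bootstrap lose powers of $n$.

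I would first try to avoid that factor through a Rényi comparison at a nearby time $s'$ slightly larger than $s$. The idea is to write $\mu_s$ as a bounded‑density perturbation of $\mu_{s'}$, apply the quadratic‑form bound to projected moments of $\mu_{s'}$, and use the monotone quantity $\E_{\mu_s}\norm{x}^2$ to control the integral $\int\Var_{\mu_s}(\norm{x}^2)\,\dd s$ globally. As a fallback, I would run a continuity argument on $M(t)=\sup_{s\le t}\norm{\Cov(\mu_s)}_{\op}$. It only needs to close with a bound of the form $M\lesssim (1+T^2 n M^{\alpha})^2$ for some $\alpha<1/2$. To reach such an $\alpha$, I would take $\Var_{\mu_s}(\norm{x}^2)$ in the $\ell_2$‑projected form $\sum_i\Var(x_i^2)$ plus off‑diagonal terms, bounding the latter by a Hilbert--Schmidt norm rather than by the operator norm.

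\emph{Lower bound.} I would write $x=(y,z)$ with $y\in\R$ the axial coordinate and $z\in\R^{n-1}$, and take
\[
K=\bigl\{(y,z): z\in B,\ |y|\le a+b\,(R-\norm{z}^2)_+\bigr\}.
\]
Here $B$ is a cube or another unconditional body in which $\norm{z}^2$ has mean $m\approx n$ and fluctuations of order $\sqrt n$, and $R=m-\lambda\sqrt n$ lies in the lower tail. The profile $z\mapsto a+b(R-\norm{z}^2)_+$ is not concave, because the positive part of a concave function is not concave; the height should instead be taken as $a+b\max\bigl(0,\dots\bigr)$ replaced by a concave majorant such as $\min\{a',\,a+b(R-\norm{z}^2)\}$, adjusted so that the resulting body remains convex. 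The body is unconditional in every coordinate.

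Under the uniform measure the set $\{\norm{z}^2<R\}$ has small mass. Moderate‑deviation estimates for $\norm{z}^2$ then show that the axial variance stays $O(1)$ after a bounded rescaling to isotropy. After tilting, $\E\norm{z}^2$ shifts down by about $tn$. Choosing $tn\asymp\lambda\sqrt n$ makes the low‑energy slices carry constant mass, where $y$ ranges over length about $b\lambda\sqrt n$. The free parameters $a,b,\lambda,t$ are constrained by isotropy, which requires $b^2\cdot\Pp_0(\text{low slice})\cdot(\text{length})^2\lesssim 1$, and by the Gaussian lower‑tail probability $e^{-c\lambda^2}$ with cubic corrections. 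Optimizing these constraints should give $\Var_{\mu_{K,t}}(y)\gtrsim n^{2/5}$, matching the upper bound and reproducing the same $n^{1/5}$ entropy budget.
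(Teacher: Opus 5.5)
The gaps are in the Hilbert--Schmidt control for the upper bound and in the choice of geometry for the lower bound. Your large-$t$ step matches the paper's. Your one-direction transfer is sound: Donsker--Varadhan with $f=c\lvert\langle\theta,x\rangle\rvert$, plus the reverse H\"older (Borell) inequality for the logconcave law $\mu_t$, does give $\norm{M_t}_{\op}\lesssim(1+H(t))^2$.

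\textbf{Upper bound.} The gap is the step you flag yourself, $\Var_{\mu_s}(\norm{X}^2)\lesssim n$, and neither remedy closes it. Monotonicity of $\E_{\mu_s}\norm{X}^2$ only gives $\int_0^t\Var_{\mu_s}(\norm{X}^2)\,\dd s\le n$, hence $H(t)\le nt$, which is useless at $t=n^{-2/5}$. The fallback cannot reach the sharp exponent. With your threshold $T=n^{-2/5}$, so $T^2n=n^{1/5}$, the inequality $M\lesssim(1+T^2nM^\alpha)^2$ only yields $M\lesssim n^{2/(5(1-2\alpha))}$. Re-optimizing $T$ against $1/(2T)$ gives $n^{2/(5-2\alpha)}$, so every $\alpha>0$ loses a power.

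What is missing is a Hilbert--Schmidt bound $S_t=\norm{M_t}_{\mathrm{HS}}\lesssim\sqrt n$, drawn from the whole spectrum rather than from one direction. By \cref{lem:quadratic-variance}, $\Var_t(\norm{X}^2)\le10S_t^2$. If $\Tr(PM)\lesssim k+q^2$ for every rank-$k$ projection $P$, then $a_j(M)\lesssim1+q^2/j$, so $S^2\lesssim n+q^4\lesssim n$ once $q^4\le n$ (\cref{cor:spectral-profile}).

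For a target time $T$, the paper proves this projected bound at $T_-=T(1-1/q)$. It uses $\mathrm{D}_{q'}(\mu_{T_-}\|\mu_0)\le H(T)$ (from convexity of $\phi$), H\"older, and projected Paouris moments. It then carries both bounds to $T$ through the coupled estimates $\lvert(\log S_t)'\rvert\le10\norm{M_t}_{\op}$ and $\mathcal D^+\log\norm{M_t}_{\op}\le10S_t$ on the short interval $[T_-,T]$. Finally, a first-contact argument gives $H(t)\le Cnt^2$ for $t\le cn^{-3/8}$, which is exactly where $q\asymp1+nt^2$ satisfies $q^4\le n$.

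Your own tool would also do the job if applied to $f=c\norm{PX}$. Paouris gives $\log\E_0 e^{c\norm{PX}}\lesssim\sqrt k$, and reverse H\"older for $\mu_t$ then yields $\Tr(PM_t)\lesssim k+(1+H(t))^2$ at time $t$ itself, bypassing $T_-$ and the propagation lemma. As written, though, the proposal never obtains Hilbert--Schmidt control, so the entropy budget $H\lesssim n^{1/5}$ remains unproved.

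\textbf{Lower bound.} The body is never pinned down, and the mechanism you describe stops at $n^{1/3}$. You keep the typical slices in the body (under the uniform law, $\{\norm{z}^2<R\}$ has small mass) and lengthen the axis only on rare low-energy slices. As you note, the flat base with a positive part is not convex. Suppose the height is made concave while staying positive on the typical window $\norm{z}^2=m+O(\sqrt n)$. Then your family has height affine in the energy with slope $b$, and isotropy forces $b\sqrt n\lesssim1$. A tilt that lowers the energy by $\asymp tn$ then raises the height by only $O(t\sqrt n)$. The tilted axial variance is therefore $\lesssim\min\{(1+t\sqrt n)^2,t^{-1}\}\lesssim n^{1/3}$, which is the Bizeul/Kook--Vempala geometry discussed in the paper.

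Your isotropy constraint charges the rare slices against a unit-height base, which is bookkeeping for the nonconvex body. For that body the same count would allow axial variance of order nearly $\sqrt n$ (take $\lambda\asymp\sqrt{\log n}$), beyond part (i), so it cannot guide the optimization.

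The paper's key move is to drop the base entirely. Its raw body \eqref{eq:lower-raw-body} gives axial half-length $(d-2\Delta_d-\norm{x}^2)/(2\Delta_d)$ on the cube. With $\Delta_d=d^{3/5}\gg\sqrt d$, it lives entirely at moderate-deviation depth. There the energy overshoot is of order $d/\Delta_d$, so the raw axial variance is $\asymp r_d^{-2}$ with $r_d=\Delta_d^2/d=d^{1/5}$ (Cram\'er--Petrov locally, Hoeffding in the tail). Meanwhile the transverse variance stays in $[1/4,3]$. Isotropization therefore dilates the axis by $r_d$. The tilt $t_d=Ar_d^{-2}$, for which $dt_d=A\Delta_d$, keeps slice mass at least $1/2$ on the whole window $\lvert z\rvert\le t_d^{-1/2}$.

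In your parametrization, with $R=m-\lambda\sqrt n$, this is $a=0$, $\lambda\asymp n^{1/10}$, and $b\asymp\lambda/\sqrt n$ in isotropic units. The balance $\min\{\lambda^4,\sqrt n/\lambda\}$ is what produces $n^{2/5}$.
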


The upper bound is a stability theorem for the entire Gaussian-tilt path of an
arbitrary compactly supported isotropic logconcave measure.
The lower bound already occurs for an unconditional body, that is, a body
invariant under independent sign changes of its coordinates.
Thus, the extremal covariance growth does not require a highly asymmetric
geometry: even under these coordinate symmetries, a radial Gaussian
reweighting can create a directional variance of order $n^{2/5}$.
Together, the two statements identify the exact worst-case growth of the
largest covariance eigenvalue under Gaussian tilting.

The geometric theorem feeds directly into Gaussian annealing.
We state the result for the Gaussian-annealing scheme based on the Proximal
Sampler in Kook and Vempala \cite{KookVempala2026Zeroth}.
Let $K\subseteq\R^n$ be an isotropic convex body given by a membership oracle,
assume that $K$ contains the unit ball $B_1(0)$, and let $\pi$ denote the
uniform probability measure on $K$.
For $q\geq1$ and probability measures $\nu\ll\pi$, let
$\mathrm{D}_q(\nu\|\pi)$ denote their order-$q$ R\'enyi divergence.



\subsection{Technique overview}

In this overview, $\lesssim$ and $\gtrsim$ hide universal constant factors,
and $\asymp$ denotes bounds in both directions.


\subsubsection{Upper bound}

\paragraph{Why the previous approach stops.}
When \(t\) is large, the Gaussian factor \(e^{-t\|x\|^2}\) makes the
tilted measure strongly logconcave, and Brascamp--Lieb immediately gives
\[
\|\operatorname{Cov}(\mu_t)\|_{\mathrm{op}} \lesssim t^{-1}.
\]
The difficult regime is therefore smaller \(t\), where this bound becomes too
weak. The previous \(O(\sqrt{n\log(en)})\) estimate \cite{KookVempala2026Zeroth} handles
sufficiently weak quadratic tilts using an early-stopping argument: once the
tilted distribution is \(O(1)\)-close to the initial distribution in R\'enyi
divergence, reverse H\"older transfers directional fourth moments and gives a
constant-factor covariance bound. The available R\'enyi-closeness estimate,
however, does not extend to the scale \(t\asymp n^{-2/5}\) needed here.

A different natural approach would be to propagate covariance bounds directly
from the isotropic endpoint \(t=0\), but this also loses too much. Write
\[
M_t := \mathbb{E}_t[XX^\top],
\qquad
A_t := \|M_t\|_{\mathrm{op}},
\qquad
S_t := \|M_t\|_{\mathrm{HS}}.
\]
Isotropy gives \(A_0=1\), but already \(S_0=\sqrt n\), and the local evolution
of \(A_t\) depends on \(S_t\). Our key departure is to obtain the required
second-moment bounds at a nearby earlier time \(T_-<T\), and then propagate
them only over the remaining short interval to the target \(T\).

\paragraph{Local estimates.}
The Gaussian weighting need not preserve the barycenter, so we work with the
uncentered second-moment matrix \(M_t\); since
\(\operatorname{Cov}(\mu_t)\preceq M_t\), controlling \(M_t\) suffices.
Differentiating along the path and using a non-centered consequence of
Letwin's dimension-free variance bound for quadratic forms \cite{Let26} gives
\[
\operatorname{Var}_t(\|X\|^2) \lesssim S_t^2,
\qquad
\left|\frac{\dd}{\dd t}\log S_t\right| \lesssim A_t,
\qquad
\mathcal{D}^+\log A_t \lesssim S_t,
\]
where $\mathcal{D}^+$ stands for the upper right Dini derivative.
Thus, the largest eigenvalue and the Hilbert--Schmidt norm control each other's
local evolution. These estimates are strong over short intervals, but the
initial value \(S_0=\sqrt n\) makes propagation all the way from \(t=0\) too
costly.

\paragraph{Projected second moments near the target.}
Fix a target time \(T>0\), choose \(q\ge 2\), and set
\[
T_- := T\left(1-\frac{1}{q}\right),
\qquad
H(T) := D_{\mathrm{KL}}(\mu_T\|\mu_0).
\]
The exponential-family structure of the quadratic tilt gives an exact
R\'enyi comparison between \(\mu_{T_-}\) and the initial isotropic distribution
\(\mu_0\), with the change-of-measure cost controlled by \(H(T)\).
Taking \(q\) sufficiently large relative to \(H(T)\), H\"older's inequality
and the projected Paouris moment bound \cite{Pao06,ALL+14} imply that, for
every rank-\(k\) orthogonal projection \(P\),
\[
\mathbb{E}_{T_-}\|PX\|^2 \lesssim k+q^2.
\]
Applying this estimate to the leading eigenspaces of \(M_{T_-}\) gives, in
the relevant parameter range,
\[
A_{T_-}\lesssim q^2,
\qquad
S_{T_-}\lesssim \sqrt n.
\]
The Hilbert--Schmidt bound is essential: a bound only on the largest
eigenvalue would not suffice, since the differential estimate for \(A_t\)
depends on \(S_t\). Because \(T_-\) is close to \(T\), the local estimates
then propagate these bounds only across the final short interval.

\paragraph{Relative entropy and the final scale.}
It remains to control how large \(q\) must be. The same Hilbert--Schmidt norm
that appears in the local estimates also controls the growth of the relative
entropy:
\[
H'(t)
=
t\,\operatorname{Var}_t(\|X\|^2)
\lesssim
tS_t^2.
\]
This yields a feedback between the R\'enyi comparison and the local
second-moment bounds. A first-contact argument closes the feedback: at a
hypothetical first crossing of a sufficiently large multiple of \(nt^2\),
the preceding argument gives \(S_t\lesssim\sqrt n\), and therefore
\(H'(t)\lesssim nt\), contradicting the derivative required at the crossing.
Consequently,
$H(t)\lesssim nt^2$
throughout the range needed for the argument. We may therefore choose \(q\)
on the scale \(1+nt^2\), which gives
\[
A_t
\lesssim
(1+nt^2)^2
\lesssim
1+n^2t^4.
\]

Finally, we combine this small-\(t\) estimate with Brascamp--Lieb. The two
bounds
$n^2t^4$ and
$
t^{-1}$
meet when \(t\asymp n^{-2/5}\), at which point both are of order
\(n^{2/5}\). Hence
\[
\|\operatorname{Cov}(\mu_t)\|_{\mathrm{op}}
\lesssim n^{2/5}
\]
for every \(t\ge 0\).

\subsubsection{Lower bound}

To match the upper bound, we begin with the desired behavior of the tilted
axial marginal. The condition for retaining Gaussian-scale fluctuations then
determines both the slice geometry and the critical precision.

\paragraph{Marginal criterion.}
Let \(d=n-1\), and write \((y,z)\in\R^d\times\R\) for the coordinates after
isotropization. At the precision \(t_d\) chosen below, integrating out \(y\)
shows that the \(Z\)-marginal has density proportional to
\(e^{-t_dz^2}p_d(z)\), where \(p_d(z)\) is the probability that a sample from
the tilted transverse product measure satisfies the slice constraint at height
\(z\). If \(p_d(z)\) remains bounded below by a universal constant throughout
\(\lvert z\rvert\le t_d^{-1/2}\), then the slice factor does not suppress the
Gaussian mass on its natural fluctuation scale, and
\(\Var(Z)\gtrsim t_d^{-1}\). The construction must therefore arrange two
effects that at first seem opposed. Under the uniform law on the raw body, the
axial marginal should be very narrow, so that isotropization substantially
expands raw axial distances. Under the tilt, the slices corresponding to the
resulting Gaussian window should instead acquire constant mass. The same
geometry first hides the axial direction and then allows the tilt to reveal
it.

\paragraph{Moderate-deviation geometry.}
For the moment, regard \(\Delta_d\) as a parameter and set
\(r_d:=\Delta_d^2/d\), the corresponding moderate-deviation rate. We use the
raw unconditional body
\[
K_d^{(0)}
:=
\left\{
(x,\lambda)\in[-\sqrt3,\sqrt3]^d\times\R:
\norm{x}^2+2\Delta_d\lvert\lambda\rvert
\le d-2\Delta_d
\right\}.
\]
The normalization \(\sqrt3\) makes \(d\) the mean transverse quadratic energy.
At raw height \(s=\lvert\lambda\rvert\), the relative slice volume \(G_d(s)\)
is the probability that \(\sum_{i=1}^d(V_i^2-1)\) falls below
\(-2\Delta_d(1+s)\), where the \(V_i\)'s are independent and uniform on
\([-\sqrt3,\sqrt3]\). A Cram\'er--Petrov estimate \cite{Petrov75} shows that,
for bounded \(s\), the ratio \(G_d(s)/G_d(0)\) decays on the axial scale
\(r_d^{-1}\), while a Hoeffding bound makes the remaining tail negligible in
the relevant moment integrals. The important quantity is this relative slice
profile, not the very small value of \(G_d(0)\) itself. The latter appears in
both the total mass and the axial second moment and therefore cancels from
their ratio, leaving the raw axial variance
\(b:=\Var(\lambda^{(0)})\asymp r_d^{-2}\). Although the retained slices are
rare, their points are not concentrated near the origin; a separate
concentration argument shows that the transverse variance
\(a:=\Var(X_1^{(0)})\) remains of constant order. Hence diagonal
isotropization gives \(z=b^{-1/2}\lambda\) with
\(b^{-1/2}\asymp r_d\), while changing the transverse coordinates only by
constant factors. This is the key geometric gain. Bounded raw heights become
an axial interval of width \(\asymp r_d\) in isotropic coordinates.

\paragraph{Critical scale.}
The tilt now exploits this dilation. Since isotropization changes the
transverse scale only by a constant factor, a tilt of precision \(t_d\) lowers
the mean transverse quadratic energy by order \(dt_d\). It can therefore make
the deficient slices typical once \(dt_d\gtrsim\Delta_d\). On the other hand,
since \(z\asymp r_d\lambda\), the Gaussian window
\(\lvert z\rvert\le t_d^{-1/2}\) corresponds to raw heights of order
\((r_d\sqrt{t_d})^{-1}\); keeping these heights bounded requires
\(t_d\gtrsim r_d^{-2}\). The first condition opens the slices, while the
second ensures that the opened range covers the Gaussian window. Within this
scheme, minimizing the admissible precision amounts to balancing
\(dt_d\asymp\Delta_d\) and \(t_d\asymp r_d^{-2}\). Using
\(r_d=\Delta_d^2/d\) gives \(\Delta_d^5\asymp d^3\). Accordingly, the proof
sets \(\Delta_d:=d^{3/5}\), \(r_d=d^{1/5}\), and
\(t_d:=A r_d^{-2}\asymp d^{-2/5}\) for a sufficiently large universal
constant \(A\). This choice lies genuinely in the moderate-deviation regime
\(\sqrt d\ll\Delta_d\ll d\). At this precision, Hoeffding concentration gives
\(p_d(z)\ge1/2\) throughout the Gaussian window, and the marginal criterion
yields \(\Var(Z)\gtrsim t_d^{-1}\asymp d^{2/5}\asymp n^{2/5}\). Since the
covariance operator norm dominates this axial variance, the desired lower
bound follows.

The same calculation also explains the previous \(d^{1/3}\) lower bound. In
Bizeul's construction \cite[Proposition~44]{Bizeul2026}, later put into
isotropic position and analyzed by Kook and Vempala
\cite[Proposition~6.3]{KookVempala2026Zeroth}, the raw axial and transverse
variances are both of constant order, so isotropization introduces no
dimension-dependent dilation. Its radial cutoff also changes at constant
speed with the axial coordinate. A shift of order \(dt\) in squared radius
therefore changes the typical radius, and hence the useful axial window, by
only order \(t\sqrt d\). The resulting variance scale is
\(\min\{1+t^2d,t^{-1}\}\), whose maximum is of order \(d^{1/3}\) at
\(t\asymp d^{-1/3}\). Thus, the earlier exponent is intrinsic to that geometry
rather than an artifact of a loose estimate. Our construction changes
precisely this ingredient. Placing the central slice at a moderate-deviation
depth makes the raw axial standard deviation \(r_d^{-1}\), so isotropization
creates the growing dilation \(r_d\) that changes the balance from \(1/3\) to
\(2/5\).

\subsection{Related work}\label{sec:related-work}


\paragraph{Sampling and annealing.}
The membership-oracle approach to convex-body computation was initiated by
Dyer, Frieze, and Kannan and extended to sampling and integration of
near-logconcave functions by Applegate and Kannan
\cite{DyerFriezeKannan1991,ApplegateKannan1991}.
Conductance and localization analyses of the ball walk, followed by hit-and-run
and simulated annealing, established the framework of annealing through a
sequence of logconcave targets
\cite{LovaszSimonovits1993,KLS1995,KannanLovaszSimonovits1997,
KannanLovaszMontenegro2006,LovaszVempala2006,LovaszVempala2007}.
Affine-invariant Dikin walks and Gaussian cooling refined the geometry and led
to faster volume algorithms
\cite{KannanNarayanan2012,LaddhaLeeVempala2020,CousinsVempala2015,
CousinsVempala2018}.
Algorithmic diffusion, restricted Gaussian oracles, and the Proximal Sampler
now provide complementary routes to sampling and integration
\cite{KookVempala2024Dikin,KookVempalaZhang2024,KookVempala2025Diffusion,
LeeShenTian2021,ChenChewiSalimWibisono2022}.
Recent work addresses cold starts and membership-only access using order-$q$
R\'enyi warm starts, constrained variants of the Proximal Sampler, and
zeroth-order algorithms
\cite{KookVempala2025Cold,KookZhang2025,NarayananRajaramanSrivastava2025,
KookVempala2026Zeroth}.
These papers analyze mixing and warmness for a chosen schedule; $Q_n$ isolates
the covariance growth that governs the warmness of adjacent targets along the
radial Gaussian path.

\paragraph{Geometry of isotropic logconcave measures.}
For fixed isotropic logconcave laws, Brascamp--Lieb controls covariance in the
strongly logconcave regime, while Paouris and Adamczak et al.\ provide sharp
radial tails and moment bounds \cite{BrascampLieb1976,Pao06,ALL+14}.
Thin-shell and large-deviation estimates of Gu\'edon--Milman and Klartag's
central-limit theorem describe radial and projection fluctuations
\cite{GuedonMilman2011,Klartag2007}.
Stochastic localization connects thin-shell information to spectral-gap and
log-Sobolev inequalities \cite{Eldan2013,LeeVempala2017,LeeVempala2018}.
Letwin's quadratic-form inequality and recent slicing/KLS bounds sharpen these
estimates for individual measures \cite{Let26,Klartag2023}.

\paragraph{Perturbations and covariance stability.}
Gaussian comparison and perturbation results of Caffarelli, Harg\'e,
Cattiaux--Guillin, and Klartag--Putterman concern transport, functional
inequalities, or Gaussian convolution
\cite{Caffarelli2000,Harge2004,CattiauxGuillin2022,KlartagPutterman2023}.
Bizeul's one-constraint example is the closest precursor to our construction:
it exhibits polynomial directional variance under a quadratic tilt
\cite[Proposition~44]{Bizeul2026}.
These results concern a single measure, a Gaussian convolution, or a different
functional-inequality question.
In contrast, $Q_n$ takes a supremum over the Gaussian-tilt path: it measures the
largest directional variance created by multiplying one isotropic law by a
radial Gaussian factor.
It is therefore related to, but distinct from, KLS and thin-shell parameters,
and it is the geometric quantity that enters R\'enyi-warmness bounds for
Gaussian annealing.


\section{Notation and preliminaries}\label{sec:preliminaries}

\subsection{Notation and conventions}\label{sec:notation}

We first fix some conventions.
We use $\norm{x}$ to denote the Euclidean norm of a vector $x$, and $B_r(x)$ to denote the closed Euclidean ball with center
$x$ and radius $r$.  We write $\log_+(u)=\max\{0,\log u\}$.
For a real symmetric matrix $A$, let $\norm{A}_{\op}$ denote its operator norm and $\Tr(A)$ its
trace, and write $\norm{A}_{\mathrm{HS}}:=\sqrt{\Tr(A^2)}$.  We write $A\preceq B$ if $B-A$ is
positive semidefinite.  All matrix square roots below are the positive-semidefinite ones.

\paragraph{Probability theory.}
A probability measure on $\R^d$ is \emph{logconcave} if its density has the form $e^{-V}$ for an
extended-valued convex function $V$.  If the measure lies in a proper affine subspace, this
definition is understood relative to that subspace.

Let $\nu$ be a probability measure on $\R^d$, and let
$X=(X_1,\ldots,X_d)^{\mathsf T}$ be an $\R^d$-valued random column vector with law $\nu$.  We
write this relation as $X\sim\nu$.  For an integrable function $f:\R^d\to\R$, set
\[
  \E_\nu[f]:=\int_{\R^d}f(x)\,\dd\nu(x)=\E[f(X)].
\]
The law $\nu$, or equivalently the random vector $X$, is \emph{isotropic} if
$\E_\nu[X]=0$ and $\Cov_\nu(X)=I_d$.

For square-integrable functions $f,g:\R^d\to\R$, set
\[
  \Cov_\nu(f,g):=\E_\nu[fg]-\E_\nu[f]\,\E_\nu[g],
  \qquad
  \Var_\nu(f):=\Cov_\nu(f,f).
\]
Covariance involving a vector- or matrix-valued function is interpreted entrywise.

For the random column vector $X\sim\nu$, define
\begin{equation}\label{eq:moment-notation}
  m_\nu:=\E_\nu[X],
  \qquad
  \Sigma_\nu:=\Cov_\nu(X),
  \qquad
  M_\nu:=\E_\nu[XX^{\mathsf T}]
  =\Sigma_\nu+m_\nu m_\nu^{\mathsf T}.
\end{equation}
Here $X^{\mathsf T}$ is a row vector, so
$XX^{\mathsf T}=(X_iX_j)_{i,j=1}^d$ is the $d\times d$ outer-product matrix. $\Sigma_\nu$
is the covariance matrix, while $M_\nu$ is the uncentered second-moment matrix.
We also write $\Cov(\nu):=\Sigma_\nu$, and omit subscripts when the underlying law is clear.

For probability measures $\nu\ll\mu$, their Kullback--Leibler divergence is
\[
  \KL(\nu\|\mu)
  :=\int\log\left(\frac{\dd\nu}{\dd\mu}\right)\dd\nu.
\]
For $q>1$, their order-$q$ R\'enyi divergence is
\begin{equation}\label{eq:renyi-definition}
  \mathrm{D}_q(\nu\|\mu)
  :=\frac1{q-1}\log\int
  \left(\frac{\dd\nu}{\dd\mu}\right)^q \dd\mu.
\end{equation}
Both divergences are $+\infty$ when $\nu\not\ll\mu$, and the continuous extension at $q=1$ is
$\mathrm{D}_1(\nu\|\mu)=\KL(\nu\|\mu)$.

\subsection{Gaussian tilts and basic calculus}\label{sec:tilt-calculus}

Let $\mu_0$ be a compactly supported probability measure on $\R^n$.  Its Gaussian tilt at
precision $t\in\R$ is the probability measure $\mu_t$ defined by
\begin{equation}\label{eq:gaussian-tilt}
  Z(t):=\E_0[e^{-t\norm{X}^2}],
  \qquad
  \phi(t):=\log Z(t),
  \qquad
  \frac{\dd\mu_t}{\dd\mu_0}(x)=e^{-t\norm{x}^2-\phi(t)}.
\end{equation}
Equivalently, for every measurable set $A\subseteq\R^n$,
\[
  \mu_t(A)=\frac{1}{Z(t)}\int_A e^{-t\norm{x}^2}\,\dd\mu_0(x).
\]
We write $\E_t$, $\Cov_t$, and $\Var_t$ for expectation, covariance, and variance under $\mu_t$.
Likewise, $m_t$, $\Sigma_t$, and $M_t$ denote the quantities in \eqref{eq:moment-notation} for
$\nu=\mu_t$.

The following identities collect the elementary calculus used along the tilt path.

\begin{lemma}[Gaussian-tilt identities]\label{lem:tilt-calculus}
For every $\mu_0$-integrable function $f$ and every $t\ge0$,
\begin{align}
  \frac{\dd}{\dd t}\E_t[f]&=-\Cov_t(f,\norm{X}^2),\label{eq:flow-basic}\\
  \frac{\dd}{\dd t}\phi(t)&=-\E_t[\norm{X}^2],
  &\frac{\dd^2}{\dd t^2}\phi(t)&=\Var_t(\norm{X}^2).\label{eq:phi-derivatives}
\end{align}
In particular,
\begin{equation}\label{eq:moment-flow-basic}
  \frac{\dd}{\dd t}m_t=-\Cov_t(X,\norm{X}^2),
  \qquad
  \frac{\dd}{\dd t}M_t=-\Cov_t(XX^{\mathsf T},\norm{X}^2).
\end{equation}

For every $r>1$ and $s,t\in\R$,
\begin{equation}\label{eq:renyi-tilt-general}
  \mathrm{D}_r(\mu_s\|\mu_t)
  =\frac{\phi(t+r(s-t))-r\phi(s)+(r-1)\phi(t)}{r-1}.
\end{equation}
\end{lemma}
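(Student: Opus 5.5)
Since $\mu_0$ is compactly supported, say $\operatorname{supp}\mu_0\subseteq B_R(0)$, the function $x\mapsto e^{-t\norm{x}^2}$ is bounded above and below by positive constants on the support, locally uniformly in $t\in\R$. The same holds for $\norm{x}^{2k}e^{-t\norm{x}^2}$ and for $f(x)\norm{x}^{2k}e^{-t\norm{x}^2}$ whenever $f\in L^1(\mu_0)$. The plan is therefore to differentiate under the integral sign by dominated convergence. For $|t-t_0|\le1$, the $t$-derivative of $f(x)e^{-t\norm{x}^2}$ is dominated by $|f(x)|R^2e^{(|t_0|+1)R^2}$, which is $\mu_0$-integrable. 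Hence $Z(t)$ and $N_f(t):=\E_0[f e^{-t\norm{X}^2}]$ are $C^\infty$, with
\[
Z'(t)=-\E_0[\norm{X}^2e^{-t\norm{X}^2}],\qquad N_f'(t)=-\E_0[f\norm{X}^2e^{-t\norm{X}^2}].
\]
Moreover $Z(t)>0$, since $\mu_0$ is a probability measure and the integrand is positive.

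\textbf{Proof of \eqref{eq:flow-basic}.} Write $\E_t[f]=N_f(t)/Z(t)$ and apply the quotient rule:
\[
\frac{\dd}{\dd t}\E_t[f]=-\E_t[f\norm{X}^2]+\E_t[f]\,\E_t[\norm{X}^2]=-\Cov_t(f,\norm{X}^2).
\]

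\textbf{Proof of \eqref{eq:phi-derivatives}.} First, $\phi'(t)=Z'(t)/Z(t)=-\E_t[\norm{X}^2]$. Next, apply \eqref{eq:flow-basic} with $f=\norm{X}^2$, which is bounded on the support and hence integrable. This gives $\phi''(t)=\Cov_t(\norm{X}^2,\norm{X}^2)=\Var_t(\norm{X}^2)$.

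\textbf{Proof of \eqref{eq:moment-flow-basic}.} Apply \eqref{eq:flow-basic} entrywise with $f=X_i$ and with $f=X_iX_j$. Both are bounded on the support, so they are integrable.

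\textbf{Proof of \eqref{eq:renyi-tilt-general}.} Since $\mu_s$ and $\mu_t$ are mutually absolutely continuous, the density ratio is
\[
\frac{\dd\mu_s}{\dd\mu_t}=e^{-(s-t)\norm{x}^2-\phi(s)+\phi(t)}.
\]
Raising this to the power $r$ and integrating against $\mu_t$ gives
\[
\int\Bigl(\frac{\dd\mu_s}{\dd\mu_t}\Bigr)^r\dd\mu_t
=e^{-r\phi(s)+r\phi(t)}\,\E_0\bigl[e^{-(t+r(s-t))\norm{X}^2}\bigr]e^{-\phi(t)}
=\exp\bigl(\phi(t+r(s-t))-r\phi(s)+(r-1)\phi(t)\bigr).
\]
This uses the definition $Z(u)=e^{\phi(u)}$, which is finite for every real $u$ because the support is compact. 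Taking $\frac1{r-1}\log$ of both sides yields the stated formula.

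The only point needing care is justifying differentiation under the integral for general $\mu_0$-integrable $f$. Compact support resolves this uniformly, and it also allows the extension to all $t\in\R$, including negative $t$, which \eqref{eq:renyi-tilt-general} requires when $t+r(s-t)<0$.
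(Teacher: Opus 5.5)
Your proof is correct and follows the paper's argument essentially step for step: compact support plus dominated convergence to differentiate $Z$ and $\E_0[fe^{-t\norm{X}^2}]$, the quotient rule for \eqref{eq:flow-basic} (the paper phrases it as a product rule for $e^{-\phi(t)}\E_0[fe^{-t\norm{X}^2}]$), specialization to $f=\norm{X}^2$, $X_i$, $X_iX_j$, and the same direct computation of $\int(\dd\mu_s/\dd\mu_t)^r\dd\mu_t$ for the R\'enyi formula. One small wording slip: $\norm{x}^{2k}e^{-t\norm{x}^2}$ and $f(x)\norm{x}^{2k}e^{-t\norm{x}^2}$ are not bounded below by positive constants, and the latter need not be bounded at all; however, the argument only uses the explicit integrable dominating function $|f(x)|R^2e^{(|t_0|+1)R^2}$ you give next, and that bound is correct.
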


\begin{proof}
Choose $R<\infty$ such that $\norm{x}\le R$ for $\mu_0$-almost every $x$.  On every bounded
interval of values of $t$, both $e^{-t\norm{x}^2}$ and
$\norm{x}^2e^{-t\norm{x}^2}$ are uniformly bounded for $\norm{x}\le R$.  Thus, for every
$\mu_0$-integrable $f$, dominated convergence permits differentiation under the integral.
In particular,
\[
  \frac{\dd}{\dd t}Z(t)
  =-\E_0\bigl[\norm{X}^2e^{-t\norm{X}^2}\bigr].
\]
\begin{samepage}
Since $Z(t)>0$, the chain rule and the definition of $\mu_t$ in
\eqref{eq:gaussian-tilt} give
\begin{align*}
  \frac{\dd}{\dd t}\phi(t)
  &=\frac{1}{Z(t)}\frac{\dd}{\dd t}Z(t)\\
  &=-\frac{1}{Z(t)}
    \E_0\bigl[\norm{X}^2e^{-t\norm{X}^2}\bigr]\\
  &=-\int_{\R^n}\norm{x}^2
    \frac{e^{-t\norm{x}^2}}{Z(t)}\,\dd\mu_0(x)\\
  &=-\int_{\R^n}\norm{x}^2\,\dd\mu_t(x)\\
  &=-\E_t[\norm{X}^2].
\end{align*}
\end{samepage}
Moreover,
\[
  \E_t[f]
  =e^{-\phi(t)}\E_0\bigl[f e^{-t\norm{X}^2}\bigr].
\]
Differentiating both factors and substituting the preceding identity yields
\begin{align*}
  \frac{\dd}{\dd t}\E_t[f]
  &=-\frac{\dd}{\dd t}\phi(t)\,\E_t[f]
    -\E_t[f\norm{X}^2]\\
  &=\E_t[f]\E_t[\norm{X}^2]-\E_t[f\norm{X}^2]\\
  &=-\Cov_t(f,\norm{X}^2).
\end{align*}
Applying this identity with $f=\norm{X}^2$ gives
\[
  \frac{\dd^2}{\dd t^2}\phi(t)
  =-\frac{\dd}{\dd t}\E_t[\norm{X}^2]
  =\Var_t(\norm{X}^2).
\]
Applying it coordinatewise with $f=X$ and $f=XX^{\mathsf T}$ gives the two identities in
\eqref{eq:moment-flow-basic}.

It remains to compute the R\'enyi divergence.  The densities in \eqref{eq:gaussian-tilt} are
strictly positive $\mu_0$-almost everywhere, so $\mu_s$ and $\mu_t$ are mutually absolutely
continuous, with
\[
  \frac{\dd\mu_s}{\dd\mu_t}(x)
  =\exp\bigl(-(s-t)\norm{x}^2-\phi(s)+\phi(t)\bigr).
\]
Therefore,
\begin{align*}
  \int\left(\frac{\dd\mu_s}{\dd\mu_t}\right)^r\dd\mu_t
  &=e^{-r\phi(s)+r\phi(t)}
    \int e^{-r(s-t)\norm{x}^2}\dd\mu_t(x)\\
  &=e^{-r\phi(s)+(r-1)\phi(t)}
    \int e^{-(t+r(s-t))\norm{x}^2}\dd\mu_0(x)\\
  &=\exp\bigl(\phi(t+r(s-t))-r\phi(s)+(r-1)\phi(t)\bigr).
\end{align*}
Taking the logarithm and dividing by $r-1$ proves \eqref{eq:renyi-tilt-general}.
\end{proof}

The entropy of the endpoint relative to the initial measure will be denoted by
\begin{equation}\label{eq:entropy-definition}
  H(t):=\KL(\mu_t\|\mu_0)
  =t\phi'(t)-\phi(t).
\end{equation}
By \cref{lem:tilt-calculus},
\begin{equation}\label{eq:entropy-derivative}
  H'(t)=t\Var_t(\norm{X}^2).
\end{equation}
Thus, $\phi$ is convex, while $H$ is nonnegative and nondecreasing on $[0,\infty)$.

\subsection{Useful Inequalities}\label{sec:analytic-inputs}

We use three standard estimates.  The first is Letwin's dimension-free variance inequality for
quadratic forms.

\begin{theorem}[{\cite[Theorem 1.2]{Let26}}]\label{thm:letwin-quadratic}
Let $Y$ be an isotropic logconcave random vector in $\R^d$.  For every symmetric matrix
$A\in\R^{d\times d}$, let $f_A(y)=y^{\mathsf T}Ay$.  Here
$\nabla(Y^{\mathsf T}AY)$ is shorthand for the Euclidean gradient
$\nabla f_A(Y)=2AY$, evaluated at $Y$.  Then
\begin{equation}\label{eq:letwin-quadratic}
  \Var(Y^{\mathsf T}AY)
  \le2\E[\norm{\nabla(Y^{\mathsf T}AY)}^2]
  =8\Tr(A^2).
\end{equation}
\end{theorem}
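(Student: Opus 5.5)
First, the equality in \eqref{eq:letwin-quadratic} is elementary. Since $\nabla f_A(Y)=2AY$, we have $\norm{\nabla f_A(Y)}^2=4\,Y^{\mathsf T}A^2Y$, and isotropy gives $\E[Y^{\mathsf T}A^2Y]=\Tr(A^2\,\E[YY^{\mathsf T}])=\Tr(A^2)$. Hence $2\E\norm{\nabla f_A(Y)}^2=8\Tr(A^2)$. The substance is the Poincaré-type inequality $\Var(f_A)\le 2\E\norm{\nabla f_A}^2$ with a dimension-free constant, restricted to quadratic test functions.

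I would prove it by Eldan's stochastic localization started from the law $\nu$ of $Y$. Run the process $\dd\nu_s(x)=\nu_s(x)\langle x-a_s,\dd W_s\rangle$, so that $\nu_s$ is $\nu$ tilted by $e^{\langle c_s,x\rangle-s\norm{x}^2/2}$ and is therefore $s$-strongly logconcave. With $F_s:=\E_{\nu_s}[f_A]$, we have $\dd F_s=\langle \Cov_{\nu_s}(X,f_A),\dd W_s\rangle$. Using the martingale property of $F_s$ and of $\nu_s(f_A^2)$ gives the variance decomposition
\[
  \Var_\nu(f_A)=\E\,\Var_{\nu_S}(f_A)+\E\int_0^S\norm{\Cov_{\nu_s}(X,f_A)}^2\,\dd s .
\]
The terminal term is handled by Brascamp--Lieb: $\Var_{\nu_S}(f_A)\le S^{-1}\E_{\nu_S}\norm{\nabla f_A}^2$. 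Moreover, $\E\,\E_{\nu_S}\norm{\nabla f_A}^2=\E_\nu\norm{\nabla f_A}^2$ by the martingale property, so this term costs $S^{-1}\E\norm{\nabla f_A}^2$.

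The drift term is where quadratic structure enters. Writing $x=a_s+(x-a_s)$, we get
\[
  \Cov_{\nu_s}(X,f_A)=2\Sigma_sAa_s+\E_{\nu_s}\bigl[(X-a_s)(X-a_s)^{\mathsf T}A(X-a_s)\bigr],
\]
where $\Sigma_s=\Cov(\nu_s)$. The first piece is bounded by $\norm{\Sigma_s}_{\op}^2\,\E_{\nu_s}\norm{\nabla f_A}^2/4\cdot4$, roughly. The second is a centered third-moment term. For a unit vector $\theta$, I would bound its pairing with $\theta$ by $\sqrt{\Var_{\nu_s}\langle\theta,X\rangle\,\Var_{\nu_s}(\text{centered quadratic})}$, which feeds the unknown quantity $\Var_{\nu_s}(f_A)$ back in. This gives a Gronwall-type differential inequality for $g(s):=\sup\Var_{\nu_s}(f_A)/\E_{\nu_s}\norm{\nabla f_A}^2$ over the localized measures.

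The main obstacle is controlling $\norm{\Sigma_s}_{\op}$ along the localization for $s\in[0,S]$ with $S$ of constant order, which is needed to close the recursion with the sharp constant $2$. I would first try taking $S$ small and a constant, and using that for quadratic $f_A$ the Hessian $2A$ is deterministic. The martingale $\E_{\nu_s}\norm{\nabla f_A}^2=4\Tr(A^2 M_{\nu_s})$ then has an explicitly computable drift, which suggests that the third-moment contribution telescopes against the growth of $\Tr(A\Sigma_sA)$ rather than requiring operator-norm control. If that telescoping fails, the fallback is to accept a universal constant in place of $2$, using the known $O(1)$ bound on $\norm{\Sigma_s}_{\op}$ for $s\lesssim 1/\log n$ together with a separate short-time argument. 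For the purposes of this paper, any universal constant would suffice.
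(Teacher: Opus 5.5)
Your check of the equality $2\E\norm{\nabla f_A(Y)}^2=8\Tr(A^2)$ is correct and matches the paper. The paper proves nothing more: it imports the inequality from \cite[Theorem~1.2]{Let26}. Your attempt to prove that inequality has a genuine gap, at exactly the step you flag. The localization bookkeeping is fine: the variance decomposition, the terminal bound $\Var_{\nu_S}(f_A)\le S^{-1}\E_{\nu_S}\norm{\nabla f_A}^2$ for the $S$-strongly logconcave $\nu_S$, and the splitting of $\Cov_{\nu_s}(X,f_A)$ are all correct. But your recursion closes with a dimension-free constant only if $\norm{\Sigma_s}_{\op}=O(1)$ for all $s$ up to a \emph{constant} time $S$. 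That is KLS-strength information: bounded covariance up to a constant time is what the standard localization argument turns into a dimension-free Poincar\'e constant. Nothing in your sketch produces it. The telescoping idea is a hope rather than an argument, and as phrased it points the wrong way. $\E_{\nu_s}\norm{\nabla f_A}^2=4\Tr(A^2M_{\nu_s})$ is a martingale, so it has no drift. The drift of $\Tr(A\Sigma_sA)$ is $-\Tr(A\Sigma_s^2A)\le0$, so there is no growth for the third-moment term to telescope against.

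The fallback does not give a universal constant either. If you only know $\norm{\Sigma_s}_{\op}=O(1)$ for $s\lesssim1/\log d$, you must stop at $S\asymp1/\log d$. The terminal Brascamp--Lieb term alone then costs $S^{-1}\E\norm{\nabla f_A}^2\asymp\log d\cdot\Tr(A^2)$. The output is $\Var(f_A)\lesssim\log d\,\Tr(A^2)$, no better than applying the known $O(\log d)$ Poincar\'e bound for isotropic logconcave laws directly to $f_A$. Your unspecified ``separate short-time argument'' would have to cover $s\in[1/\log d,c]$, which is the entire difficulty. This $\log$ loss is not harmless here, so your remark that any universal constant suffices does not save the fallback. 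Fed through \cref{lem:quadratic-variance} and \cref{lem:differential-estimates}, it would degrade \cref{thm:upper-general} by a power of $\log n$ and lose the sharp $n^{2/5}$. Note also that for $A=I_d$ the statement reads $\Var(\norm{Y}^2)\le8d$, a dimension-free thin-shell (variance) bound. So a genuinely new input specific to quadratic forms is needed. That input is Letwin's theorem, and in this paper's context the right move is to cite it, as the paper does.
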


The final equality in \eqref{eq:letwin-quadratic} is immediate from isotropy:
\[
  \E[\norm{\nabla f_A(Y)}^2]
  =4\E[Y^{\mathsf T}A^2Y]
  =4\Tr(A^2).
\]
The inequality itself is the main result of \cite[Theorem 1.2]{Let26}.


The second is Paouris's norm-tail estimate and the projected positive-moment consequence that
we use.

\begin{theorem}[Paouris tail bound {\cite[Theorem 1.1 and Section 8]{Pao06}}]
\label{thm:paouris-tail}
Let $Z$ be an isotropic logconcave random vector in $\R^k$.  There is a universal constant
$C_0>0$ such that, for every $t\ge1$,
\begin{equation}\label{eq:paouris-tail}
  \Pp\bigl(\norm{Z}\ge C_0t\sqrt{k}\bigr)\le e^{-t\sqrt{k}}.
\end{equation}
\end{theorem}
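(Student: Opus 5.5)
The statement just given is Paouris's tail bound, cited from \cite{Pao06}; we use it as a black box. If a self-contained justification were wanted, the plan would run through Paouris's moment comparison. The central claim is that for an isotropic logconcave $Z$ in $\R^k$ and every $1\le p\le c\sqrt{k}$,
\[
  \bigl(\E\norm{Z}^p\bigr)^{1/p}\le C\bigl(\E\norm{Z}^2\bigr)^{1/2}=C\sqrt{k}.
\]
More precisely, for $p\ge1$ we aim for $(\E\norm{Z}^p)^{1/p}\le C(\sqrt{k}+p)$. The tail bound then follows from Markov's inequality at the moment $p\asymp t\sqrt{k}$:
\[
  \Pp\bigl(\norm{Z}\ge C_0t\sqrt{k}\bigr)
  \le\left(\frac{C(\sqrt{k}+t\sqrt{k})}{C_0t\sqrt{k}}\right)^{t\sqrt{k}}
  \le e^{-t\sqrt{k}}
\]
for $t\ge1$, once $C_0\ge 2eC$.

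To obtain the moment comparison, we would follow Paouris's route through the $L_q$-centroid bodies $Z_q(\mu)$. The first step is to record the identity
\[
  \bigl(\E\norm{Z}^q\bigr)^{1/q}\asymp\sqrt{k/q}\;\Bigl(\int_{S^{k-1}}h_{Z_q}(\theta)^q\,\dd\sigma(\theta)\Bigr)^{1/q}.
\]
This holds up to constants: we integrate $|\langle Z,\theta\rangle|^q$ over the sphere and use $\int_{S^{k-1}}|\langle x,\theta\rangle|^q\,\dd\sigma\asymp(\sqrt{q/k})^q\norm{x}^q$. Next we define $q^*=\max\{q:\ k(Z_q)\ge q\}$, where $k(\cdot)$ is the Dvoretzky dimension of the body $Z_q$. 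For $q\le q^*$, Dvoretzky--Milman concentration of the support function on the sphere turns the $L_q$ average of $h_{Z_q}$ into its mean width $w(Z_q)$. This gives $(\E\norm{Z}^q)^{1/q}\asymp\sqrt{k/q}\,w(Z_q)$.

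Two further inputs then give the comparison. First, $w(Z_q)$ is controlled through random projections: the projected measure is again isotropic logconcave, and its $L_q$-centroid body is comparable to a Euclidean ball of radius $\sqrt{q}$, by Borell's lemma. Second, isotropy yields $q^*\gtrsim\sqrt{k}$, because $Z_q\supseteq Z_2=B_2^k$ while $\mathrm{diam}(Z_q)\lesssim q$. Together these give $(\E\norm{Z}^q)^{1/q}\lesssim\sqrt{k}$ for all $q\le c\sqrt{k}$. Moments beyond $\sqrt{k}$ are handled by Borell's lemma directly, which gives $(\E\norm{Z}^p)^{1/p}\le C\frac{p}{q}(\E\norm{Z}^q)^{1/q}$ for $p\ge q$.

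The main obstacle is the Dvoretzky-dimension lower bound $q^*\gtrsim\sqrt{k}$ together with the mean-width estimate. This is the deep part of \cite{Pao06}, and it is exactly why we cite the result rather than reprove it. In the paper itself we take the statement from \cite[Theorem 1.1 and Section 8]{Pao06} and apply it to $Z=(PMP)^{-1/2}PX$ restricted to rank-$k$ projections. There, isotropy of the projected vector is the only hypothesis that needs checking.
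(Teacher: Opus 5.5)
Citing \cite{Pao06} as a black box is exactly what the paper does. Your Markov reduction from $(\E\norm{Z}^p)^{1/p}\le C(\sqrt{k}+p)$ at $p=t\sqrt{k}$ is also correct, so as a proof the proposal stands. The optional sketch of Paouris's argument, however, would not reach the $\sqrt{k}$ scale as written.

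From $Z_q\supseteq Z_2=B_2^k$ and $R(Z_q)\lesssim q$ (with $R$ the circumradius) you only get $w(Z_q)\ge1$. Hence $k_*(Z_q)\asymp k\,w(Z_q)^2/R(Z_q)^2\gtrsim k/q^2$, which is at least $q$ only when $q\lesssim k^{1/3}$. That yields $q_*\gtrsim k^{1/3}$ and a tail of order $e^{-ctk^{1/3}}$, not $e^{-t\sqrt{k}}$.

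The missing input is the lower bound $w(Z_q)\gtrsim\sqrt{q}$ for $q\le c\sqrt{k}$, which upgrades the estimate to $k_*(Z_q)\gtrsim k/q\ge q$. It follows from three facts:
\begin{itemize}
\item $(\E\norm{Z}^q)^{1/q}\ge\sqrt{k}$;
\item your identity $(\E\norm{Z}^q)^{1/q}\asymp\sqrt{k/q}\,w_q(Z_q)$, where $w_q(Z_q):=(\int_{S^{k-1}}h_{Z_q}^q\,\dd\sigma)^{1/q}$;
\item spherical concentration of the $R(Z_q)$-Lipschitz function $h_{Z_q}$, which gives $w_q(Z_q)\le w(Z_q)+Cq^{3/2}/\sqrt{k}$.
\end{itemize}

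Second, the upper bound on $w(Z_q)$ does not follow from Borell's lemma. Borell only gives $Z_q\subseteq CqB_2^k$, i.e.\ $w(Z_q)\lesssim q$ and $(\E\norm{Z}^q)^{1/q}\lesssim\sqrt{kq}$. Moreover, the $L_q$-centroid body of a $q$-dimensional isotropic logconcave vector need not be comparable to $\sqrt{q}B_2^q$: for a product of symmetric exponentials, $h_{Z_q}(e_1)\asymp q$.

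Paouris instead argues through volume. For $q\le c\,k_*(Z_q)$, a random $q$-dimensional projection satisfies $P_FZ_q\supseteq c\,w(Z_q)B_F$. Also, $P_FZ_q$ is the $L_q$-centroid body of the $q$-dimensional isotropic logconcave vector $P_FZ$. For such a vector $Y$ in $\R^q$ one has the volume bound $\mathrm{vol}_q(Z_q(Y))^{1/q}\lesssim 1/L_Y\lesssim 1$, with $L_Y$ the isotropic constant. Comparing volume radii then gives $w(Z_q)\lesssim\sqrt{q}$.

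Finally, the paper applies the tail bound only under the isotropic $\mu_0$, to $Z=PX$ regarded on $\operatorname{range}(P)$. No $(PMP)^{-1/2}$ renormalization is involved.
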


The cited theorem is stated for the uniform measure on an isotropic convex body;
\cite[Section 8]{Pao06} records its extension to arbitrary isotropic logconcave measures.

\begin{theorem}[Projected Paouris bound]\label{thm:paouris}
Let $X$ be isotropic and logconcave in $\R^n$, and let $P$ be an orthogonal projection of rank
$k$.  For every $p\ge2$,
\begin{equation}\label{eq:paouris-projection}
  \bigl(\E[\norm{PX}^p]\bigr)^{1/p}\le C(\sqrt{k}+p).
\end{equation}
\end{theorem}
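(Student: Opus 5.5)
The plan is to derive the projected bound from the Paouris tail estimate, \cref{thm:paouris-tail}, applied to the projected vector, and then integrate the tail against $p\,u^{p-1}$.

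\emph{Reduction to an isotropic vector in $\R^k$.} Let $U\in\R^{n\times k}$ have orthonormal columns spanning the range of $P$, and set $Z:=U^{\mathsf T}X\in\R^k$. Then $\norm{PX}=\norm{Z}$. The vector $Z$ is logconcave, since it is a linear image of a logconcave vector. It is isotropic, since $\E Z=0$ and $\Cov(Z)=U^{\mathsf T}U=I_k$. It therefore suffices to show $(\E\norm{Z}^p)^{1/p}\le C(\sqrt k+p)$ for an isotropic logconcave $Z\in\R^k$.

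\emph{Layer-cake integration.} Write
\[
  \E\norm{Z}^p=\int_0^\infty p\,u^{p-1}\,\Pp(\norm{Z}\ge u)\,\dd u ,
\]
and split at $u_0=C_0\sqrt k$.
\begin{itemize}
  \item On $[0,u_0]$ the probability is at most $1$, so this part contributes at most $u_0^p=(C_0\sqrt k)^p$.
  \item On $[u_0,\infty)$, substitute $u=C_0 s\sqrt k$ with $s\ge1$. By \eqref{eq:paouris-tail}, $\Pp(\norm{Z}\ge u)\le e^{-s\sqrt k}=e^{-u/C_0}$.
\end{itemize}
Consequently the second part is bounded by
\[
  \int_0^\infty p\,u^{p-1}e^{-u/C_0}\,\dd u=C_0^p\,\Gamma(p+1)\le (C_0p)^p .
\]
Combining the two parts gives
\[
  \E\norm{Z}^p\le (C_0\sqrt k)^p+(C_0p)^p\le \bigl(C_0(\sqrt k+p)\bigr)^p ,
\]
and taking $p$-th roots yields \eqref{eq:paouris-projection} with $C=C_0$.

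\emph{Where the care is needed.} The argument is short, and the only step needing care is the tail integral. One must check two things there:
\begin{itemize}
  \item The substitution turns the Paouris exponent $e^{-s\sqrt k}$ into $e^{-u/C_0}$, which is independent of $k$. This is exactly what produces the $p$ term rather than something like $p/\sqrt k$ or $p\sqrt k$.
  \item The Gamma-function bound $\Gamma(p+1)\le p^p$ holds for $p\ge1$.
\end{itemize}

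Two alternatives are worth noting.
\begin{itemize}
  \item If one prefers to avoid the Gamma-function estimate, one can bound $u^{p-1}e^{-u/(2C_0)}\le (2C_0(p-1)/e)^{p-1}$ and integrate the remaining $e^{-u/(2C_0)}$. This gives the same result up to the constant.
  \item One could instead cite the Paouris moment form $(\E\norm{Z}^p)^{1/p}\le C(\sqrt k+p)$ from \cite{Pao06,ALL+14} directly. The tail-integration route above keeps the proof self-contained given \cref{thm:paouris-tail}.
\end{itemize}
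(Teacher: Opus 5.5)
Your proposal is correct and follows the paper's proof essentially step for step. You reduce to the isotropic logconcave vector $PX$ on the $k$-dimensional range of $P$, apply the Paouris tail bound, and split the layer-cake integral at $C_0\sqrt{k}$ to get $(C_0\sqrt{k})^p+C_0^p\Gamma(p+1)$; the only cosmetic difference is that you use $\Gamma(p+1)\le p^p$ (valid for real $p\ge1$) together with $a^p+b^p\le(a+b)^p$ where the paper uses Stirling's estimate, and this affects only the value of the constant.
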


\begin{proof}
Let $F=\operatorname{range}(P)$ and regard $Z=PX$ as a random vector in the Euclidean space
$F\cong\R^k$.  First, $Z$ is logconcave.  Indeed, linear images of logconcave measures are
logconcave.  More explicitly, if $f$ is the density of $X$, then the density of $Z$ on $F$ is
\[
  g(z)=\int_{F^\perp}f(z+w)\,\dd w,
  \qquad z\in F,
\]
which is logconcave by Pr\'ekopa's theorem.

Next, $Z$ is isotropic on $F$.  Since $X$ is isotropic,
\[
  \E[Z]=P\E[X]=0.
\]
For every $u,v\in F$, orthogonality of $P$ gives $Pu=u$ and $Pv=v$, and hence
\begin{align*}
  \E[\langle Z,u\rangle\langle Z,v\rangle]
  &=\E[\langle PX,u\rangle\langle PX,v\rangle]\\
  &=\E[\langle X,Pu\rangle\langle X,Pv\rangle]\\
  &=\E[\langle X,u\rangle\langle X,v\rangle]
    =\langle u,v\rangle.
\end{align*}
Thus the covariance of $Z$ as an $F$-valued random vector is $I_F$.  Notice that its covariance
as an $\R^n$-valued random vector is $P$; restricting to $F$ is what makes it isotropic.

We may therefore apply \cref{thm:paouris-tail} in dimension $k$.  Writing $R=\norm{Z}$ and
setting $r=C_0t\sqrt{k}$ in \eqref{eq:paouris-tail}, we obtain
\[
  \Pp(R\ge r)\le e^{-r/C_0},
  \qquad r\ge C_0\sqrt{k}.
\]
Set $r_0:=C_0\sqrt{k}$.  Since $R\ge0$, the pointwise identity
\begin{equation*}
  R^p=\int_0^\infty p r^{p-1}\1_{\{R\ge r\}}\,\dd r
\end{equation*}
and Tonelli's theorem give, for $p\ge2$,
\begin{align*}
  \E[R^p]
  &=p\int_0^\infty r^{p-1}\Pp(R\ge r)\,\dd r\\
  &=p\int_0^{r_0}r^{p-1}\Pp(R\ge r)\,\dd r
    +p\int_{r_0}^\infty r^{p-1}\Pp(R\ge r)\,\dd r\\
  &\le p\int_0^{r_0}r^{p-1}\,\dd r
    +p\int_{r_0}^\infty r^{p-1}e^{-r/C_0}\,\dd r\\
  &\le r_0^p+p\int_0^\infty r^{p-1}e^{-r/C_0}\,\dd r.
\end{align*}
In the first inequality, we use $\Pp(R\ge r)\le1$ for $0\le r\le r_0$ and the
preceding tail bound for $r\ge r_0$.
The last inequality enlarges the integration interval of a nonnegative
integrand.
Substituting $u=r/C_0$ in the remaining integral gives
\begin{align*}
  p\int_0^\infty r^{p-1}e^{-r/C_0}\,\dd r
  &=pC_0^p\int_0^\infty u^{p-1}e^{-u}\,\dd u\\
  &=pC_0^p\Gamma(p)
   =C_0^p\Gamma(p+1).
\end{align*}
The second line uses the definition of the gamma function and its recurrence
$\Gamma(p+1)=p\Gamma(p)$.
Consequently,
\begin{equation*}
  \E[R^p]\le(C_0\sqrt{k})^p+C_0^p\Gamma(p+1).
\end{equation*}
First, the upper form of Stirling's estimate gives a universal constant
$C_1\ge1$ such that, for every $p\ge2$,
\begin{equation*}
  \Gamma(p+1)\le C_1\sqrt{p}\left(\frac{p}{e}\right)^p.
\end{equation*}
Therefore,
\begin{align*}
  \sqrt[p]{\Gamma(p+1)}
  &\le C_1^{1/p}p^{1/(2p)}\frac{p}{e}\\
  &\le C_2p,
\end{align*}
where $C_2$ is universal, since both $C_1^{1/p}$ and $p^{1/(2p)}$ are
uniformly bounded for $p\ge2$.  For $p\ge1$ and $a,b\ge0$, we also have
\begin{equation*}
  (a^p+b^p)^{1/p}\le a+b.
\end{equation*}
\begin{samepage}
Applying this inequality to the preceding moment bound and recalling that
$R=\norm{Z}=\norm{PX}$, we obtain
\begin{align*}
  \bigl(\E[\norm{PX}^p]\bigr)^{1/p}
  &=\bigl(\E[R^p]\bigr)^{1/p}\\
  &\le\bigl((C_0\sqrt{k})^p+C_0^p\Gamma(p+1)\bigr)^{1/p}\\
  &\le C_0\sqrt{k}+C_0\sqrt[p]{\Gamma(p+1)}\\
  &\le C_0\sqrt{k}+C_0C_2p\\
  &\le C(\sqrt{k}+p),
\end{align*}
as claimed.
\end{samepage}
\end{proof}

The final input controls the strongly Gaussian part of the tilt path.  We recall the precise
nonsmooth consequence of the Brascamp--Lieb inequality that we need.

\begin{theorem}[Brascamp--Lieb covariance bound]\label{thm:brascamp-lieb}
Let $W:\R^d\to\R\cup\{+\infty\}$ be proper and lower semicontinuous, and suppose that
\[
  0<\mathcal Z_W:=\int_{\R^d}e^{-W(x)}\,\dd x<\infty.
\]
For some $\kappa>0$, assume that $W$ is $\kappa$-strongly convex in the extended-valued sense;
that is, $x\mapsto W(x)-\frac{\kappa}{2}\norm{x}^2$ is convex.  If $\nu$ is the probability
measure
\[
  \dd\nu(x)=\mathcal Z_W^{-1}e^{-W(x)}\,\dd x,
\]
then
\begin{equation}\label{eq:BL-general}
  \Cov(\nu)\preceq\frac1\kappa I_d.
\end{equation}
Equivalently, for every $u\in\R^d$,
\[
  \Var_\nu(\langle u,X\rangle)\le\frac1\kappa\norm{u}^2.
\]
\end{theorem}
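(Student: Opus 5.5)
The statement just made is the Brascamp--Lieb covariance bound for extended-valued $\kappa$-strongly convex potentials. I would reduce it to the classical smooth Brascamp--Lieb inequality,
\[
\Var_\nu(f)\le\E_\nu\bigl[\langle(\nabla^2W)^{-1}\nabla f,\nabla f\rangle\bigr],
\]
applied with the linear test function $f(x)=\langle u,x\rangle$, by approximating the potential. With $\nabla^2W\succeq\kappa I_d$ this gives $\Var_\nu(\langle u,X\rangle)\le\norm{u}^2/\kappa$ at once, and the matrix form \eqref{eq:BL-general} is a restatement of the scalar bound for all $u$.

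\textbf{Regularization.} Write $W=\frac\kappa2\norm{x}^2+V$ with $V$ convex, proper and lower semicontinuous; lower semicontinuity of $V$ follows from that of $W$. Since $\mathcal Z_W>0$, the domain of $V$ has nonempty interior. Replace $V$ by its Moreau--Yosida envelope $V_\varepsilon(x)=\inf_y\bigl(V(y)+\norm{x-y}^2/(2\varepsilon)\bigr)$, which is finite, convex and $C^{1,1}$. Then convolve with a small Gaussian mollifier, which keeps convexity and makes the envelope $C^\infty$. The resulting $W_\varepsilon=\frac\kappa2\norm{x}^2+V_\varepsilon^{\mathrm{smooth}}$ is smooth and satisfies $\nabla^2W_\varepsilon\succeq\kappa I_d$. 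It also satisfies $\int e^{-W_\varepsilon}<\infty$, because the quadratic term forces Gaussian decay and $V_\varepsilon$ is bounded below by an affine function. So the smooth inequality applies to each $\nu_\varepsilon\propto e^{-W_\varepsilon}$, giving $\Var_{\nu_\varepsilon}(\langle u,X\rangle)\le\norm{u}^2/\kappa$.

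\textbf{Passage to the limit.} As $\varepsilon\downarrow0$, $V_\varepsilon\uparrow V$ pointwise, including the value $+\infty$ off the domain; I would send the mollification scale to zero faster so that this convergence survives. By monotone or dominated convergence, $e^{-W_\varepsilon}\to e^{-W}$ pointwise. The domination comes from the uniform bound $W_\varepsilon\ge\frac\kappa2\norm{x}^2-a-b\norm{x}$, which holds because the affine minorant of $V$ also minorizes every $V_\varepsilon$. Hence $\int e^{-W_\varepsilon}\to\mathcal Z_W$ and $\int x_i x_j e^{-W_\varepsilon}\to\int x_ix_j e^{-W}$. The first and second moments of $\nu_\varepsilon$ therefore converge to those of $\nu$, and the variance bound passes to the limit.

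\textbf{Main obstacle.} The only delicate point is the limiting argument at the boundary of the domain of $V$, where $V$ may jump to $+\infty$. This is needed, for example, for uniform measures on convex bodies tilted by a Gaussian. I would handle it with the monotonicity of the Moreau envelope together with the uniform Gaussian-type domination above. The boundary of a convex set has Lebesgue measure zero, so pointwise convergence almost everywhere suffices. Alternatively, if one prefers not to reprove the smooth case, the smooth Brascamp--Lieb inequality for linear functions can be cited directly from \cite{BrascampLieb1976}, and only the approximation step needs to be written out.
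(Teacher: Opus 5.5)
Your proposal is correct and follows essentially the same route as the paper: apply the smooth Brascamp--Lieb inequality to $\langle u,x\rangle$, then regularize $W-\frac\kappa2\norm{\cdot}^2$ by a Moreau envelope plus mollification, add back $\frac\kappa2\norm{\cdot}^2$, and pass to the limit in the moments through degree two. Your extra details, namely the uniform Gaussian-type domination and taking the mollification scale $\delta$ with $\delta^2=o(\varepsilon)$, are sound. One harmless correction: an affine minorant $\ell(y)=\alpha+\langle\beta,y\rangle$ of $V$ only gives $V_\varepsilon\ge\ell-\varepsilon\norm{\beta}^2/2$, not $V_\varepsilon\ge\ell$, which still yields the needed domination for $\varepsilon\le1$.
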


\begin{proof}
First suppose that $W$ is finite and smooth.  Strong convexity gives
$\nabla^2W(x)\succeq\kappa I_d$.  The Brascamp--Lieb variance inequality
\cite[Theorem 4.1]{BrascampLieb1976}, applied to $h_u(x)=\langle u,x\rangle$, yields
\[
  \Var_\nu(h_u)
  \le\int_{\R^d}
    \langle(\nabla^2W(x))^{-1}u,u\rangle\,\dd\nu(x)
  \le\frac1\kappa\norm{u}^2.
\]
This is equivalent to \eqref{eq:BL-general}.

For a proper lower-semicontinuous extended-valued $W$, apply Moreau-envelope regularization and
mollification to the convex function $W-\frac{\kappa}{2}\norm{\cdot}^2$, and then add back
$\frac{\kappa}{2}\norm{\cdot}^2$.  This gives smooth $\kappa$-strongly convex potentials whose
normalized measures converge to $\nu$ with their moments through degree two.  Applying the smooth
case and passing to the limit proves \eqref{eq:BL-general}.
\end{proof}

To apply the theorem, write the logconcave density of $\mu_0$ as $e^{-V}$, with $V$ convex and
possibly extended-valued.  The potential of $\mu_t$ is
\[
  W_t(x)=V(x)+t\norm{x}^2,
\]
and $W_t-\frac{2t}{2}\norm{\cdot}^2=V$ is convex.  Thus $W_t$ is $2t$-strongly convex, and
\cref{thm:brascamp-lieb} with $\kappa=2t$ gives
\begin{equation}\label{eq:BL}
  \Sigma_t=\Cov(\mu_t)\preceq\frac1{2t}I,
  \qquad t>0.
\end{equation}

\subsection{A non-centered quadratic-form estimate}\label{sec:noncentered-quadratic-form}

We record a consequence of Letwin's theorem for a logconcave vector that is not necessarily
centered or isotropic.  The estimate is stated in terms of the uncentered matrix $M$ from
\eqref{eq:moment-notation}.

\begin{lemma}[Variance of a non-centered quadratic form]\label{lem:quadratic-variance}
Let $X\in\R^d$ have a nondegenerate logconcave law, and put $M=\E[XX^{\mathsf T}]$.  For every
symmetric matrix $B$,
\begin{equation}\label{eq:quadratic-variance}
  \Var(X^{\mathsf T}BX)
  \le10\Tr\!\left((M^{1/2}BM^{1/2})^2\right)
  =10\Tr((BM)^2).
\end{equation}
\end{lemma}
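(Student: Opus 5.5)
Write $m=\E[X]$ and $\Sigma=\Cov(X)$, so $M=\Sigma+mm^{\mathsf T}$ and $\Sigma\succ0$ by nondegeneracy. The plan is to reduce to the isotropic case by an affine change of variables, apply \cref{thm:letwin-quadratic} to the quadratic part, control the linear part by the variance of a linear functional, and then compare the resulting centered quantities with $M$.

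\textbf{Decomposition.} Set $Y=\Sigma^{-1/2}(X-m)$; this is an isotropic logconcave vector. Then
\[
  X^{\mathsf T}BX = Y^{\mathsf T}\tilde BY + 2\langle \Sigma^{1/2}Bm, Y\rangle + m^{\mathsf T}Bm,
  \qquad \tilde B:=\Sigma^{1/2}B\Sigma^{1/2}.
\]
The constant term does not affect the variance. Using $\Var(U+V)\le(1+\varepsilon)\Var U+(1+\varepsilon^{-1})\Var V$, I would bound the two remaining pieces separately.

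\textbf{Bounding each piece.} By \cref{thm:letwin-quadratic},
\[
  \Var(Y^{\mathsf T}\tilde BY)\le 8\Tr(\tilde B^2)=8\Tr((B\Sigma)^2).
\]
By isotropy of $Y$,
\[
  \Var\bigl(2\langle \Sigma^{1/2}Bm,Y\rangle\bigr)=4\,m^{\mathsf T}B\Sigma Bm.
\]
Taking $\varepsilon=1/4$ gives
\[
  \Var(X^{\mathsf T}BX)\le 10\Tr((B\Sigma)^2)+20\,m^{\mathsf T}B\Sigma Bm.
\]
It then remains to dominate this right-hand side by $10\Tr((BM)^2)$.

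\textbf{Comparison with $M$.} Expanding,
\[
  \Tr((BM)^2)=\Tr((B\Sigma)^2)+2\,m^{\mathsf T}B\Sigma Bm+(m^{\mathsf T}Bm)^2.
\]
Here $m^{\mathsf T}B\Sigma Bm\ge0$ because $\Sigma\succeq0$, and $\Tr(B\Sigma B\,mm^{\mathsf T})=m^{\mathsf T}B\Sigma Bm$. Hence
\[
  10\Tr((BM)^2)\ge 10\Tr((B\Sigma)^2)+20\,m^{\mathsf T}B\Sigma Bm,
\]
which is exactly the bound obtained above. The choice $\varepsilon=1/4$ is forced by matching the constants $10=(1+\varepsilon)\cdot 8$ and $20=(1+\varepsilon^{-1})\cdot 4$.

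\textbf{Remaining equality and main obstacle.} The identity $\Tr((M^{1/2}BM^{1/2})^2)=\Tr((BM)^2)$ follows from cyclicity of the trace. The step I expect to need the most care is the constant bookkeeping: the cross term must be absorbed with a weight that matches the factor $2$ in the expansion of $\Tr((BM)^2)$, and $\varepsilon=1/4$ does this exactly, leaving no slack.
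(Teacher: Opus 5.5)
Your proposal is correct and follows the paper's proof essentially step for step. You use the same affine reduction $X=m+\Sigma^{1/2}Y$, Letwin's bound for the quadratic part, the exact variance $4m^{\mathsf T}B\Sigma Bm$ for the linear part, and the same expansion $\Tr((BM)^2)=\Tr((B\Sigma)^2)+2m^{\mathsf T}B\Sigma Bm+(m^{\mathsf T}Bm)^2$. The only cosmetic difference is how the two pieces are combined: the paper uses the $L^2$ triangle inequality followed by the elementary bound $(\sqrt{8x}+2y)^2\le 10(x+2y^2)$, which is equivalent to your Young-type inequality with $\varepsilon=1/4$, and the AM--GM step is tight there too.
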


\begin{proof}
Let $m=\E[X]$ and $\Sigma=\Cov X$.  Write $X=m+\Sigma^{1/2}Y$, where $Y$ is isotropic and
logconcave, and set
\[
  A:=\Sigma^{1/2}B\Sigma^{1/2},
  \qquad
  b:=\Sigma^{1/2}Bm.
\]
Then
\[
  X^{\mathsf T}BX
  =Y^{\mathsf T}AY+2b^{\mathsf T}Y+m^{\mathsf T}Bm.
\]
By \cref{thm:letwin-quadratic},
$\Var(Y^{\mathsf T}AY)\le8\Tr(A^2)$.  Isotropy also gives the exact identity
$\Var(2b^{\mathsf T}Y)=4\norm{b}^2$.

The triangle inequality in $L^2$ now gives
\[
  \sqrt{\Var(X^{\mathsf T}BX)}
  \le\sqrt{8\Tr(A^2)}+2\norm{b}.
\]
Since
\[
  \left(\sqrt{8\Tr(A^2)}+2\norm{b}\right)^2
  \le10\left(\Tr(A^2)+2\norm{b}^2\right),
\]
it suffices to show that
$\Tr(A^2)+2\norm{b}^2\le\Tr((BM)^2)$.

Using $M=\Sigma+mm^{\mathsf T}$, we obtain
\[
  \Tr((BM)^2)
  =\Tr(B\Sigma B\Sigma)
   +2m^{\mathsf T}B\Sigma Bm
   +(m^{\mathsf T}Bm)^2
  =\Tr(A^2)+2\norm{b}^2+(m^{\mathsf T}Bm)^2.
\]
This proves \eqref{eq:quadratic-variance}.
\end{proof}

\section{The upper bound}\label{sec:upper}

\begin{theorem}[Upper bound for logconcave measures]\label{thm:upper-general}
Let $\mu_0$ be a compactly supported isotropic logconcave probability measure on $\R^n$.  Then its
Gaussian tilts from \eqref{eq:gaussian-tilt} satisfy
\begin{equation}\label{eq:upper-logconcave}
  \norm{\Sigma_t}_{\op}\le Cn^{2/5},
  \qquad t\ge0,
\end{equation}
where $C>0$ is universal.
\end{theorem}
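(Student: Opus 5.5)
The proof combines a small-$t$ bound with Brascamp--Lieb. For $t\ge n^{-2/5}$, \eqref{eq:BL} gives $\norm{\Sigma_t}_{\op}\le \tfrac12 n^{2/5}$ at once. It therefore suffices to show
\[
A_t:=\norm{M_t}_{\op}\lesssim 1+n^2t^4 \qquad\text{for } 0\le t\le n^{-2/5},
\]
since $\Sigma_t\preceq M_t$ and $n^2t^4\le n^{2/5}$ on that range. Set $S_t:=\norm{M_t}_{\mathrm{HS}}$.

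\textbf{Local differential inequalities.} Using \eqref{eq:moment-flow-basic}, for a unit vector $u$ we have
\[
\tfrac{\dd}{\dd t}\langle u,M_tu\rangle=-\Cov_t(\langle u,X\rangle^2,\norm{X}^2).
\]
Cauchy--Schwarz bounds its absolute value by $\sqrt{\Var_t(\langle u,X\rangle^2)\Var_t(\norm{X}^2)}$. Now apply \cref{lem:quadratic-variance} to $\mu_t$, which is logconcave, once with $B=uu^{\mathsf T}$ and once with $B=I$:
\[
\Var_t(\langle u,X\rangle^2)\le 10\langle u,M_tu\rangle^2,\qquad \Var_t(\norm{X}^2)\le 10S_t^2.
\]
Taking the supremum over $u$ yields $\mathcal D^+\log A_t\le 10S_t$. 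In the same way,
\[
\tfrac{\dd}{\dd t}S_t^2=-2\Cov_t\bigl(X^{\mathsf T}M_tX,\norm{X}^2\bigr),
\]
and \cref{lem:quadratic-variance} with $B=M_t$ gives $\Var_t(X^{\mathsf T}M_tX)\le 10\Tr(M_t^4)\le 10A_t^2S_t^2$. Hence $|\tfrac{\dd}{\dd t}\log S_t|\lesssim A_t$. Finally, \eqref{eq:entropy-derivative} gives $H'(t)\le 10tS_t^2$.

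\textbf{Rényi comparison and projected moments.} Fix a target $T$ and $q\ge2$, and put $T_-=T(1-1/q)$. By \eqref{eq:renyi-tilt-general} with $s=T_-$, $t=0$, $r=q$ (note $rs=T$),
\[
\mathrm D_q(\mu_{T_-}\|\mu_0)=\frac{\phi(T)-q\phi(T_-)}{q-1}.
\]
Convexity of $\phi$ together with $H=t\phi'-\phi$ should bound this by a constant multiple of $H(T)$. That is,
\[
\phi(T)-q\phi(T_-)\le (q-1)\bigl(-\phi(T_-)+T_-\phi'(T_-)\bigr)+\ldots,
\]
and I expect to obtain $\mathrm D_q\lesssim H(T)+O(1)$ with a little care.

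Let $P$ be any rank-$k$ projection. Hölder with exponents $q$ and $q'=q/(q-1)$ gives
\[
\E_{T_-}\norm{PX}^2\le e^{\frac{q-1}{q}\mathrm D_q}\,\bigl(\E_0\norm{PX}^{2q'}\bigr)^{1/q'}.
\]
Since $q'\le 2$ this last factor is only about $k$, which is too weak to gain anything from a large $q$. So I will use the reverse arrangement: put the density in $L^{q'}$ with $q'$ close to $1$ and the moment in $L^{p}$ with $p\asymp q$. This needs a Rényi divergence of order near $1$, which is controlled by KL-type quantities. By \cref{thm:paouris}, $(\E_0\norm{PX}^{2p})^{1/p}\lesssim k+p^2$. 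Choosing $q\asymp 1+H(T)$ makes the exponential factor $O(1)$ and gives
\[
\E_{T_-}\norm{PX}^2\lesssim k+q^2 .
\]
Applying this with $k=1$ yields $A_{T_-}\lesssim q^2$. For the Hilbert--Schmidt norm, apply it to the span of the top-$k$ eigenvectors of $M_{T_-}$ for dyadic $k$. This shows the $k$-th eigenvalue is $\lesssim 1+q^2/k$, so $S_{T_-}^2\lesssim n+q^4$, which is $\lesssim n$ as long as $q\lesssim n^{1/4}$.

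\textbf{Propagation and bootstrap.} On $[T_-,T]$, an interval of length $T/q$, the inequality $\mathcal D^+\log A\lesssim S$ combined with $S\lesssim\sqrt n$ (kept stable by the $\log S$ bound, since $A\cdot T/q$ stays bounded) changes $\log A$ by at most $\sqrt n\,T/q$. This is $O(1)$ once $q\gtrsim \sqrt n\,T$, which for $T\le n^{-2/5}$ is at most $n^{1/10}$. Thus I take
\[
q\asymp 1+H(T)+\sqrt n\,T .
\]

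To control $H$, run a first-contact argument. Let $t_*$ be the first time with $H(t_*)=Lnt_*^2$ for a large constant $L$. Before $t_*$ the above gives $S_t\lesssim\sqrt n$, hence $H'(t)\lesssim nt$, which integrates to $H(t_*)\lesssim nt_*^2$ and contradicts the choice of $L$. Consequently $q\lesssim 1+nT^2$ and $A_T\lesssim(1+nT^2)^2\lesssim 1+n^2T^4$, as needed.

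\textbf{Main obstacle.} The hardest step is the Rényi/Hölder transfer. It must exploit large moments from \cref{thm:paouris} while paying only $O(H(T))$ in divergence, so the exact exponential-family identity must be combined with convexity of $\phi$ in the right direction. The cost of the order-$q$ divergence between $\mu_{T_-}$ and $\mu_0$ has to come out as roughly $H(T)/q$ per unit, so that the Hölder loss is constant.
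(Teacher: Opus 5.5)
\textbf{Verdict: genuine gap.} Your outline matches the paper's architecture: Letwin-based differential inequalities for $\norm{M_t}_{\op}$ and $S_t$, comparison at $T_-=T(1-1/q)$, projected Paouris moments with Ky Fan, short-time propagation, and a first-contact bound $H(t)\lesssim nt^2$. However, the step you flag as the main obstacle is left unproved, and the route you sketch toward it does not work.

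With $r=q$ and $s=T_-$ you get $rs=qT_-=(q-1)T$, not $T$. So \eqref{eq:renyi-tilt-general} gives $\mathrm{D}_q(\mu_{T_-}\|\mu_0)=\bigl(\phi((q-1)T)-q\phi(T_-)\bigr)/(q-1)$, which evaluates $\phi$ far beyond $T$. Take the model case $\phi(s)=-ns+\tfrac v2 s^2$ with $v=\Var_0(\norm{X}^2)$. There this divergence equals $q(1-1/q)^2H(T)$, so the hoped-for bound $\mathrm{D}_q\lesssim H(T)+O(1)$ fails once $q\asymp 1+H(T)$ is large. Expanding around $T_-$ also points the wrong way: convexity only gives $\phi(T)-\phi(T_-)\ge\phi'(T_-)(T-T_-)$, a lower bound. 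After you switch to the correct H\"older arrangement, you justify the density factor by saying ``a R\'enyi divergence of order near $1$ is controlled by KL-type quantities.'' That is not a valid general principle, since for any order above $1$ one always has $\mathrm{D}_{q'}\ge\KL$.

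The missing idea is to use order $q'=q/(q-1)$, for which the identity you wanted does hold: $T_-=T/q'$, so $q'T_-=T$. Taking $r=q'$, $s=T_-$, $t=0$ in \eqref{eq:renyi-tilt-general} gives $\mathrm{D}_{q'}(\mu_{T_-}\|\mu_0)=\bigl(\phi(T)-q'\phi(T_-)\bigr)/(q'-1)$. Now use the supporting line of the convex function $\phi$ at the later time $T$, namely $\phi(T_-)\ge\phi(T)+\phi'(T)(T_--T)$. This yields $\phi(T)-q'\phi(T_-)\le(q'-1)\bigl(T\phi'(T)-\phi(T)\bigr)=(q'-1)H(T)$. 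Hence $\norm{\dd\mu_{T_-}/\dd\mu_0}_{L^{q'}(\mu_0)}=\exp\bigl(\mathrm{D}_{q'}(\mu_{T_-}\|\mu_0)/q\bigr)\le e^{H(T)/q}\le e$ whenever $q\ge H(T)$. H\"older with $\norm{PX}^2\in L^q(\mu_0)$, together with \cref{thm:paouris} at $p=2q$, then gives $\E_{T_-}\norm{PX}^2\lesssim k+q^2$. The key point is that the order-$q'$ divergence at $T_-$ is compared with the entropy at $T$ itself, which is exactly the quantity your bootstrap controls.

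With this lemma inserted, the rest of your argument goes through as in the paper, and your integrated form of the first-contact argument is fine. Two points should be made explicit there. First, $H(t)/(nt^2)\to\Var_0(\norm{X}^2)/(2n)\le 4$ as $t\downarrow 0$, so choosing $L>4$ makes the first contact time positive. Second, the constants from the endpoint estimate do not depend on $L$ once $n$ is large.
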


Throughout this section, we fix $\mu_0$ as in \cref{thm:upper-general} and use the notation and
preliminary results from \cref{sec:preliminaries}.

\subsection{Differential estimates along the tilt}\label{sec:upper-differential}

For the upper-bound argument, set
\begin{equation}\label{eq:upper-state}
  m_t:=\E_t[X],
  \qquad
  \Sigma_t:=\Cov_t(X),
  \qquad
  M_t:=\E_t[XX^{\mathsf T}],
  \qquad
  S_t:=\norm{M_t}_{\mathrm{HS}}=\sqrt{\Tr(M_t^2)}.
\end{equation}
We use $M_t$ rather than $\Sigma_t$ because differentiation under a radial tilt closes directly on
uncentered quadratic forms.  Moreover, $\Sigma_t\preceq M_t$, so $\norm{M_t}_{\op}$ controls the
quantity in \eqref{eq:upper-logconcave}.

The law $\mu_t$ is nondegenerate, hence $M_t$ is positive definite and $S_t>0$.  By
\eqref{eq:moment-flow-basic},
\begin{equation}\label{eq:M-derivative}
  M_t'=-\Cov_t(XX^{\mathsf T},\norm{X}^2).
\end{equation}
For a real-valued function $f$, write
\begin{equation}\label{eq:dini-definition}
  \mathcal D^+f(t):=\limsup_{h\downarrow0}\frac{f(t+h)-f(t)}h
\end{equation}
for its upper right Dini derivative.

\begin{lemma}[Differential estimates]\label{lem:differential-estimates}
For every $t\ge0$,
\begin{align}
  \Var_t(\norm{X}^2)&\le10S_t^2,\label{eq:var-energy}\\
  \left|\frac{\dd}{\dd t}\log S_t\right|&\le10\norm{M_t}_{\op},\label{eq:logS-flow}\\
  \mathcal D^+\bigl(\log\norm{M_\cdot}_{\op}\bigr)(t)&\le10S_t.\label{eq:log-Mop-flow}
\end{align}
\end{lemma}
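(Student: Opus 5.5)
All three estimates come from \cref{lem:quadratic-variance}, applied to $X\sim\mu_t$ with a suitable symmetric matrix $B$, combined with Cauchy--Schwarz on the covariance formulas in \eqref{eq:M-derivative}. First we note that $\mu_t$ is logconcave and nondegenerate, since its density is $e^{-V-t\norm{x}^2}$ with $V$ convex. Hence \cref{lem:quadratic-variance} applies with $M=M_t$.

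\textbf{Step 1: the energy variance.} For \eqref{eq:var-energy}, take $B=I$. This gives $\Var_t(\norm{X}^2)\le10\Tr(M_t^2)=10S_t^2$ immediately.

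\textbf{Step 2: the Hilbert--Schmidt norm.} For \eqref{eq:logS-flow}, compute $\frac{\dd}{\dd t}S_t^2=2\Tr(M_tM_t')$. By \eqref{eq:M-derivative} this equals $-2\Cov_t(X^{\mathsf T}M_tX,\norm{X}^2)$, since $\Tr(M_tXX^{\mathsf T})=X^{\mathsf T}M_tX$. Cauchy--Schwarz bounds its absolute value by $2\sqrt{\Var_t(X^{\mathsf T}M_tX)\Var_t(\norm{X}^2)}$. Applying \cref{lem:quadratic-variance} with $B=M_t$ gives
\[
\Var_t(X^{\mathsf T}M_tX)\le10\Tr(M_t^4)\le10\norm{M_t}_{\op}^2S_t^2.
\]
Together with Step~1,
\[
\Bigl|\tfrac{\dd}{\dd t}S_t^2\Bigr|\le2\cdot10\,\norm{M_t}_{\op}S_t^2 .
\]
Dividing by $2S_t^2$ yields $|\frac{\dd}{\dd t}\log S_t|\le10\norm{M_t}_{\op}$. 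Differentiability of $S_t$ follows from the smoothness of $t\mapsto M_t$, which is given by dominated convergence as in \cref{lem:tilt-calculus}, together with $S_t>0$.

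\textbf{Step 3: the operator norm.} Inequality \eqref{eq:log-Mop-flow} is the only delicate point, because $\norm{M_t}_{\op}$ need not be differentiable. We use the variational formula $\norm{M_s}_{\op}=\max_{\norm{u}=1}u^{\mathsf T}M_su$. Fix $t$ and $h>0$, and let $u_h$ be a top unit eigenvector of $M_{t+h}$. Then
\[
\norm{M_{t+h}}_{\op}-\norm{M_t}_{\op}\le u_h^{\mathsf T}(M_{t+h}-M_t)u_h\le h\sup_{s\in[t,t+h]}\norm{M_s'}_{\op}.
\]
Since $M'$ is continuous, this gives $\mathcal D^+\norm{M_\cdot}_{\op}(t)\le\norm{M_t'}_{\op}$. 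Next, for any unit vector $u$,
\[
|u^{\mathsf T}M_t'u|=|\Cov_t(\langle u,X\rangle^2,\norm{X}^2)|\le\sqrt{\Var_t(\langle u,X\rangle^2)}\sqrt{10}\,S_t .
\]
Applying \cref{lem:quadratic-variance} with $B=uu^{\mathsf T}$ gives $\Var_t(\langle u,X\rangle^2)\le10(u^{\mathsf T}M_tu)^2\le10\norm{M_t}_{\op}^2$. Hence $\norm{M_t'}_{\op}\le10\norm{M_t}_{\op}S_t$, because $M_t'$ is symmetric and its operator norm is attained on a quadratic form. Finally, the chain rule for Dini derivatives composed with the increasing smooth function $\log$, valid since $\norm{M_t}_{\op}>0$ and $\norm{M_\cdot}_{\op}$ is continuous, converts this into $\mathcal D^+\log\norm{M_\cdot}_{\op}(t)\le10S_t$.

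The main obstacle is the nonsmoothness handled in Step~3. The trick of evaluating at the eigenvector of the later time $t+h$ avoids any need for eigenvalue perturbation theory.
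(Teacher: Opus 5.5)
Your proof is correct and follows the paper's argument: you apply \cref{lem:quadratic-variance} with $B=I$, $B=M_t$, and $B=uu^{\mathsf T}$, and combine it with Cauchy--Schwarz on the flow $M_t'=-\Cov_t(XX^{\mathsf T},\norm{X}^2)$, exactly as the paper does. The one difference is in the third estimate. The paper invokes the Danskin-type formula $\mathcal D^+\norm{M_\cdot}_{\op}(t)\le\max_{v\in E_t,\,\norm{v}=1}v^{\mathsf T}M_t'v$ over the top eigenspace, whereas your later-time-eigenvector argument gives the cruder but self-contained bound $\mathcal D^+\norm{M_\cdot}_{\op}(t)\le\norm{M_t'}_{\op}$; this loses nothing here because your bound on $|u^{\mathsf T}M_t'u|$ holds uniformly in $u$.
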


\begin{proof}
Applying \cref{lem:quadratic-variance} with $B=I_n$ gives \eqref{eq:var-energy}.

Next, the definition of $S_t$ in \eqref{eq:upper-state} gives
$S_t^2=\Tr(M_t^2)$.  Since $M_t$ is differentiable and $S_t>0$, the chain
rule and cyclicity of the trace give
\begin{align*}
  2S_tS_t'
  &=\frac{\dd}{\dd t}S_t^2\\
  &=\frac{\dd}{\dd t}\Tr(M_t^2)\\
  &=\Tr(M_t'M_t+M_tM_t')\\
  &=2\Tr(M_tM_t').
\end{align*}
For any deterministic matrix $B$, linearity of trace, expectation, and
covariance gives
\begin{align*}
  \Tr\!\left(B\Cov_t(XX^{\mathsf T},\norm{X}^2)\right)
  &=\Cov_t\bigl(\Tr(BXX^{\mathsf T}),\norm{X}^2\bigr)\\
  &=\Cov_t(X^{\mathsf T}BX,\norm{X}^2).
\end{align*}
At each fixed $t$, the matrix $M_t$ is deterministic.  Substituting $B=M_t$
and \eqref{eq:M-derivative} into the preceding calculation yields
\begin{align*}
  2S_tS_t'
  &=2\Tr(M_tM_t')\\
  &=-2\Tr\!\left(M_t\Cov_t(XX^{\mathsf T},\norm{X}^2)\right)\\
  &=-2\Cov_t(X^{\mathsf T}M_tX,\norm{X}^2).
\end{align*}
Cauchy--Schwarz and \cref{lem:quadratic-variance} imply
\begin{align*}
  2S_t|S_t'|
  &\le2\sqrt{\Var_t(X^{\mathsf T}M_tX)\Var_t(\norm{X}^2)}\\
  &\le20\sqrt{\Tr(M_t^4)S_t^2}
  \le20\norm{M_t}_{\op}S_t^2.
\end{align*}
Dividing by $2S_t^2$ proves \eqref{eq:logS-flow}.

Let $E_t$ be the top eigenspace of $M_t$.  The variational formula for the largest eigenvalue of a
differentiable symmetric matrix gives
\[
  \mathcal D^+\bigl(\norm{M_\cdot}_{\op}\bigr)(t)\le
  \max_{\substack{v\in E_t\\ \norm{v}=1}}v^{\mathsf T}M_t'v.
\]
For each unit vector $v\in E_t$, \eqref{eq:M-derivative} and
\cref{lem:quadratic-variance} give
\begin{align*}
  |v^{\mathsf T}M_t'v|
  &=\left|\Cov_t((v^{\mathsf T}X)^2,\norm{X}^2)\right|\\
  &\le\sqrt{10\norm{M_t}_{\op}^2\cdot10S_t^2}
  =10\norm{M_t}_{\op}S_t.
\end{align*}
Taking the maximum and dividing by $\norm{M_t}_{\op}$ proves \eqref{eq:log-Mop-flow}.
\end{proof}

\begin{corollary}[Integrated differential estimates]\label{cor:integrated-differential}
For every $0\le a<b$,
\begin{align}
  S_b&\le S_a\exp\left(10\int_a^b\norm{M_s}_{\op}\,\dd s\right),\label{eq:S-integrated}\\
  \norm{M_b}_{\op}&\le \norm{M_a}_{\op}
  \exp\left(10\int_a^b S_s\,\dd s\right).\label{eq:Mop-integrated}
\end{align}
\end{corollary}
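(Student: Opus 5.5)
The plan is to integrate the two differential inequalities of \cref{lem:differential-estimates}, treating the two estimates separately.

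For \eqref{eq:S-integrated}, note first that $t\mapsto M_t$ is continuously differentiable on $[0,\infty)$. Indeed, by dominated convergence, as in the proof of \cref{lem:tilt-calculus}, every entry of $M_t$ is a smooth function of $t$ because $\mu_0$ is compactly supported. Since $M_t$ is positive definite, $S_t>0$, and so $\log S_t$ is $C^1$. By \eqref{eq:logS-flow}, $\frac{\dd}{\dd s}\log S_s\le 10\norm{M_s}_{\op}$. The map $s\mapsto\norm{M_s}_{\op}$ is continuous (the operator norm is Lipschitz and $M_s$ is continuous), hence integrable on $[a,b]$. The fundamental theorem of calculus then gives $\log S_b-\log S_a\le 10\int_a^b\norm{M_s}_{\op}\,\dd s$, and exponentiating yields \eqref{eq:S-integrated}.

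For \eqref{eq:Mop-integrated}, the function $f(t):=\log\norm{M_t}_{\op}$ is only known to be continuous, and \eqref{eq:log-Mop-flow} controls just its upper right Dini derivative. I would therefore use the standard comparison fact: if $f$ is continuous on $[a,b]$ and $\mathcal D^+f(t)\le g(t)$ for all $t\in[a,b)$ with $g$ continuous, then $f(b)-f(a)\le\int_a^b g$. Apply it with $g(s)=10S_s$, which is continuous. To prove the fact, set $F(t):=f(t)-\int_a^t g-\varepsilon(t-a)$ for $\varepsilon>0$. Then $\mathcal D^+F(t)\le-\varepsilon<0$ everywhere on $[a,b)$, because the integral term is differentiable with derivative $g(t)$. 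Suppose $F(t_1)>F(a)$ for some $t_1$, and let $t^\ast:=\sup\{t\in[a,t_1]:F(t)\le F(a)\}$. By continuity, $F(t^\ast)=F(a)$ and $t^\ast<t_1$, with $F>F(a)$ on $(t^\ast,t_1]$. This forces $\mathcal D^+F(t^\ast)\ge0$, a contradiction. Hence $F(b)\le F(a)$. Letting $\varepsilon\downarrow0$ gives $f(b)-f(a)\le\int_a^b g$, and exponentiating gives \eqref{eq:Mop-integrated}.

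The only delicate point is the Dini-derivative comparison, since $\norm{M_t}_{\op}$ need not be differentiable where eigenvalues cross. The argument above uses only continuity of $f$ and the strict negativity of $\mathcal D^+F$, so it needs no further regularity. Continuity of $f$ holds because $\norm{M_t}_{\op}>0$ depends continuously on $t$.
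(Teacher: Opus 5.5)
Your proof is correct. The first inequality is handled exactly as in the paper: $\log S_t$ is $C^1$, and you integrate \eqref{eq:logS-flow}.

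For the second inequality you take a different route. The paper observes that $t\mapsto\norm{M_t}_{\op}$ is locally Lipschitz, since $M_t$ is $C^1$ and the top eigenvalue is $1$-Lipschitz in operator norm. Hence $\log\norm{M_t}_{\op}$ is absolutely continuous and differentiable almost everywhere, with derivative equal to $\mathcal D^+$ there, and the Dini bound can be integrated directly. You instead prove a self-contained comparison lemma by the $\varepsilon$-perturbed first-crossing argument. It needs only continuity of $f$, the Dini bound at every point of $[a,b)$, and a continuous majorant $g$.

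Your version avoids both the Lipschitz estimate and the Lebesgue differentiation machinery. The price is that you need the bound everywhere and $g$ continuous. The paper's version needs the Lipschitz regularity, but it would tolerate a Dini bound holding only almost everywhere with a merely integrable majorant. Both sets of hypotheses hold here, since \eqref{eq:log-Mop-flow} holds for every $t\ge0$ and $S_t$ is continuous, so either argument suffices.
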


\begin{proof}
Integrating \eqref{eq:logS-flow} proves \eqref{eq:S-integrated}.  The map
$t\mapsto\norm{M_t}_{\op}$ is locally Lipschitz because $M_t$ is continuously differentiable and
the top eigenvalue is $1$-Lipschitz in operator norm.
Thus the Dini-derivative bound \eqref{eq:log-Mop-flow} can be integrated, which gives
\eqref{eq:Mop-integrated}.
\end{proof}

\subsection{Estimates at an earlier time}\label{sec:upper-earlier-time}

We next compare a target time $T$ with a nearby earlier time.  The change of measure and the
spectral estimate are kept separate.

\begin{lemma}[R\'enyi comparison]\label{lem:renyi-comparison}
Let $T>0$ and $q>1$, and define $q':=q/(q-1)$.  Then
\begin{equation}\label{eq:renyi-entropy}
  \mathrm{D}_{q'}(\mu_{T/q'}\|\mu_0)
  =\frac{\phi(T)-q'\phi(T/q')}{q'-1}
  \le H(T).
\end{equation}
\end{lemma}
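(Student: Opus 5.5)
The identity follows from \eqref{eq:renyi-tilt-general}, and the inequality from convexity of $\phi$.

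\textbf{Step 1: the identity.} Apply \eqref{eq:renyi-tilt-general} with $r=q'$, $s=T/q'$, and $t=0$. Since $\phi(0)=\log\E_0[1]=0$ and $t+r(s-t)=q'\cdot T/q'=T$, the formula gives directly
\[
  \mathrm{D}_{q'}(\mu_{T/q'}\|\mu_0)=\frac{\phi(T)-q'\phi(T/q')}{q'-1}.
\]

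\textbf{Step 2: the inequality.} Write $s:=T/q'\in(0,T)$, so that $q'=T/s$ and $q'-1=(T-s)/s$. The claim becomes
\[
  \frac{s\,\phi(T)-T\,\phi(s)}{T-s}\le T\phi'(T)-\phi(T)=H(T).
\]
Multiply through by $T-s>0$ and collect the $\phi(T)$ terms. The inequality is then equivalent to
\[
  T\phi(T)-T\phi(s)\le T(T-s)\phi'(T).
\]
Dividing by $T>0$, this reads $\phi(T)-\phi(s)\le(T-s)\phi'(T)$.

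\textbf{Step 3: convexity.} By \eqref{eq:phi-derivatives}, $\phi''=\Var_t(\norm{X}^2)\ge0$, so $\phi$ is convex and differentiable. The tangent line at $T$ therefore lies below $\phi$:
\[
  \phi(s)\ge\phi(T)+(s-T)\phi'(T).
\]
Rearranging gives exactly $\phi(T)-\phi(s)\le(T-s)\phi'(T)$, which completes the proof.

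The only place needing care is the algebraic rearrangement in Step 2, specifically the signs when multiplying by $T-s$ and by $s$. Every step involved is an equivalence, so the argument has no real obstacle.
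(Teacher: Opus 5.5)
Your proof is correct and follows essentially the same route as the paper. The identity comes from evaluating the tilt formula at $t=0$ with $\phi(0)=0$ (the paper recomputes the integral directly instead of citing \eqref{eq:renyi-tilt-general}, which is the same computation), and the inequality is the tangent-line inequality $\phi(s)\ge\phi(T)+\phi'(T)(s-T)$ for the convex function $\phi$ at $T$, which the paper multiplies by $-q'$ directly while you reach it through equivalent rearrangements.
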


\begin{proof}
Set $s:=T/q'$.  By the definition of the Gaussian tilt and the fact that $\phi(0)=0$,
the density of $\mu_s$ with respect to $\mu_0$ is
\[
  \frac{\dd\mu_s}{\dd\mu_0}(x)
  =\exp\bigl(-s\norm{x}^2-\phi(s)\bigr).
\]
Consequently,
\begin{align*}
  \int\left(\frac{\dd\mu_s}{\dd\mu_0}\right)^{q'}\dd\mu_0
  &=e^{-q'\phi(s)}\E_0\bigl[e^{-q's\norm{X}^2}\bigr]\\
  &=\exp\bigl(\phi(q's)-q'\phi(s)\bigr)\\
  &=\exp\bigl(\phi(T)-q'\phi(T/q')\bigr),
\end{align*}
where the last equality uses $q's=T$.  Taking the logarithm and dividing by $q'-1$ as in
\eqref{eq:renyi-definition} proves the identity in \eqref{eq:renyi-entropy}.

It remains to prove the inequality.  Since $s<T$ and $\phi$ is convex, its supporting line at
$T$ lies below its graph:
\[
  \phi(s)\ge \phi(T)+\phi'(T)(s-T).
\]
Multiplying by $-q'$ and using $s=T/q'$ gives
\begin{align*}
  \phi(T)-q'\phi(s)
  &\le \phi(T)-q'\phi(T)-q'\phi'(T)(s-T)\\
  &=(q'-1)\bigl(T\phi'(T)-\phi(T)\bigr)\\
  &=(q'-1)H(T).
\end{align*}
Dividing by $q'-1>0$ completes the proof.
\end{proof}

\begin{lemma}[Projected second-moment estimate]\label{lem:projected-second-moment}
Let $T>0$, $q\ge\max\{2,H(T)\}$, and $T_-=T(1-1/q)$.  For every orthogonal projection $P$ of
rank $k$,
\begin{equation}\label{eq:projected-second-moment}
  \Tr(PM_{T_-})=\E_{T_-}[\norm{PX}^2]\le C(k+q^2).
\end{equation}
\end{lemma}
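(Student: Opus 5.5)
The plan is to bound $\E_{T_-}[\norm{PX}^2]$ by a change of measure from $\mu_{T_-}$ to $\mu_0$, using Hölder's inequality. The Rényi control comes from \cref{lem:renyi-comparison}, and the moment bound under $\mu_0$ from \cref{thm:paouris}.

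\textbf{Matching the times.} With $q'=q/(q-1)$ we have $T/q'=T(1-1/q)=T_-$. So \cref{lem:renyi-comparison} gives
\[
\mathrm{D}_{q'}(\mu_{T_-}\|\mu_0)\le H(T).
\]
Writing $g=\dd\mu_{T_-}/\dd\mu_0$, this says
\[
\Bigl(\int g^{q'}\,\dd\mu_0\Bigr)^{1/q'}=\exp\Bigl(\tfrac{q'-1}{q'}\mathrm{D}_{q'}\Bigr)\le \exp\bigl(H(T)/q\bigr),
\]
because $(q'-1)/q'=1/q$.

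\textbf{Hölder step.} Apply Hölder with the conjugate pair $(q',q)$:
\[
\E_{T_-}[\norm{PX}^2]=\E_0[g\norm{PX}^2]\le \norm{g}_{L^{q'}(\mu_0)}\bigl(\E_0\norm{PX}^{2q}\bigr)^{1/q}.
\]
Since $q\ge H(T)$, the first factor is at most $e^{H(T)/q}\le e$. For the second factor, $\mu_0$ is isotropic logconcave and $p=2q\ge 4\ge 2$, so \cref{thm:paouris} gives
\[
\bigl(\E_0\norm{PX}^{2q}\bigr)^{1/q}=\Bigl(\bigl(\E_0\norm{PX}^{2q}\bigr)^{1/(2q)}\Bigr)^2\le C^2(\sqrt{k}+2q)^2\le 2C^2(k+4q^2).
\]
Combining the two factors yields $\E_{T_-}[\norm{PX}^2]\le C'(k+q^2)$. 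The identity $\Tr(PM_{T_-})=\E_{T_-}[X^{\mathsf T}PX]=\E_{T_-}\norm{PX}^2$ holds because $P$ is a symmetric idempotent, so $X^{\mathsf T}PX=\norm{PX}^2$.

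\textbf{Main point.} The essential observation is that the Rényi order must be the dual exponent $q'$, close to $1$, so that the Hölder moment exponent $q$ on $\mu_0$ is large. The hypothesis $q\ge H(T)$ then keeps the density factor bounded by a constant, while the Paouris moment grows only like $q$, giving the $q^2$ term. The rank-$0$ case is trivial, and $q\ge2$ ensures $q'\in(1,2]$, so \cref{lem:renyi-comparison} applies.
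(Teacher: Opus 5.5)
Your proposal is correct and follows the paper's proof essentially step for step. You apply H\"older with the conjugate pair $(q',q)$, use the R\'enyi comparison at $T_-=T/q'$ to bound the density factor by $e^{H(T)/q}\le e$, and finish with the projected Paouris bound at $p=2q$.
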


\begin{proof}
Set $q'=q/(q-1)$.  H\"older's inequality and \cref{lem:renyi-comparison} give
\begin{align*}
  \E_{T_-}[\norm{PX}^2]
  &\le
  \left\|\frac{\dd\mu_{T_-}}{\dd\mu_0}\right\|_{L^{q'}(\mu_0)}
  \left(\E_0[\norm{PX}^{2q}]\right)^{1/q}\\
  &=\exp\left(\frac{\mathrm{D}_{q'}(\mu_{T_-}\|\mu_0)}q\right)
  \left(\E_0[\norm{PX}^{2q}]\right)^{1/q}\\
  &\le e\left(\E_0[\norm{PX}^{2q}]\right)^{1/q}.
\end{align*}
Applying \cref{thm:paouris} with $p=2q$ bounds the last line by
$C(\sqrt{k}+q)^2\le C(k+q^2)$.
\end{proof}

\begin{corollary}[Spectral profile at the earlier time]\label{cor:spectral-profile}
Under the assumptions of \cref{lem:projected-second-moment}, let
\[
  a_1(M_{T_-})\ge\cdots\ge a_n(M_{T_-})
\]
be the eigenvalues of $M_{T_-}$.  Then
\begin{equation}\label{eq:spectral-profile}
  a_j(M_{T_-})\le C\left(1+\frac{q^2}{j}\right),
  \qquad 1\le j\le n.
\end{equation}
Consequently,
\begin{equation}\label{eq:earlier-norms}
  \norm{M_{T_-}}_{\op}\le Cq^2,
  \qquad
  S_{T_-}\le C\sqrt{n+q^4}.
\end{equation}
\end{corollary}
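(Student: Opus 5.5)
The plan is to apply \cref{lem:projected-second-moment} to projections onto the leading eigenspaces of $M_{T_-}$. Let $v_1,\dots,v_n$ be orthonormal eigenvectors of $M_{T_-}$ with eigenvalues $a_1\ge\cdots\ge a_n$. For each $j$, let $P_j$ be the orthogonal projection onto $\operatorname{span}\{v_1,\dots,v_j\}$, which has rank $j$. Then $\Tr(P_jM_{T_-})=\sum_{i\le j}a_i$. Since the eigenvalues are ordered, this sum is at least $j\,a_j$. Combining with \eqref{eq:projected-second-moment} gives $j a_j\le C(j+q^2)$, and dividing by $j$ yields \eqref{eq:spectral-profile}.

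For the consequences, take $j=1$ to get $\norm{M_{T_-}}_{\op}=a_1\le C(1+q^2)\le C'q^2$, using $q\ge2$. For the Hilbert--Schmidt norm, square the profile bound and sum:
\[
  S_{T_-}^2=\sum_{j=1}^n a_j^2\le 2C^2\sum_{j=1}^n\left(1+\frac{q^4}{j^2}\right)\le 2C^2\left(n+\frac{\pi^2}{6}q^4\right).
\]
Taking square roots gives $S_{T_-}\le C\sqrt{n+q^4}$ after adjusting the constant.

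No step is a real obstacle. The only point needing care is to use the ordered-sum lower bound $\sum_{i\le j}a_i\ge j a_j$, rather than bounding each $a_i$ individually. This is what makes the $q^4$ contribution to $S_{T_-}^2$ summable and independent of $n$.
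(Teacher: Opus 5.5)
Your proof is correct and follows the paper's argument. Like the paper, you project onto the top $j$ eigenvectors of $M_{T_-}$, bound $j a_j \le \Tr(P_j M_{T_-}) \le C(j+q^2)$, take $j=1$ (using $q\ge 2$) for the operator norm, and square and sum with $\sum_j j^{-2}<\infty$ for $S_{T_-}$; your explicit constants via $(x+y)^2\le 2(x^2+y^2)$ just spell out what the paper absorbs into $C$.
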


\begin{proof}
Let $P$ project onto the span of the top $j$ eigenvectors of $M_{T_-}$.  Ky Fan's principle and
\cref{lem:projected-second-moment} yield
\[
  ja_j(M_{T_-})
  \le\sum_{i=1}^j a_i(M_{T_-})
  =\Tr(PM_{T_-})
  \le C(j+q^2),
\]
which proves \eqref{eq:spectral-profile}.

Taking $j=1$ gives the operator-norm bound.  Squaring \eqref{eq:spectral-profile} and summing gives
\[
  S_{T_-}^2
  \le C\sum_{j=1}^n\left(1+\frac{q^2}{j}\right)^2
  \le C\left(n+q^4\sum_{j=1}^{\infty}j^{-2}\right)
  \le C(n+q^4).
\]
\end{proof}

\subsection{Short-time propagation}\label{sec:upper-propagation}

The following stability statement depends only on the differential estimates for
$\norm{M_t}_{\op}$ and $S_t$.

\begin{lemma}[Short-time stability]\label{lem:short-time-stability}
Suppose that $0\le a<b$ and that $\alpha,\beta>0$ satisfy
\begin{equation}\label{eq:stability-initial}
  \norm{M_a}_{\op}\le\alpha,
  \qquad
  S_a\le\beta.
\end{equation}
If $20\alpha(b-a)<\log2$ and $20\beta(b-a)<\log2$, then, for every $s\in[a,b]$,
\begin{equation}\label{eq:stability-conclusion}
  \norm{M_s}_{\op}\le2\alpha,
  \qquad
  S_s\le2\beta.
\end{equation}
\end{lemma}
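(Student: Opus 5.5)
The plan is a continuity (first-exit) argument built on \cref{cor:integrated-differential}. Define
\[
  s^*:=\sup\bigl\{s\in[a,b]:\ \norm{M_r}_{\op}\le2\alpha\text{ and }S_r\le2\beta\text{ for all }r\in[a,s]\bigr\}.
\]
The set is nonempty because it contains $a$, by \eqref{eq:stability-initial}. Both $r\mapsto\norm{M_r}_{\op}$ and $r\mapsto S_r$ are continuous: $M_r$ is continuously differentiable, the top eigenvalue is $1$-Lipschitz in operator norm, and the Hilbert--Schmidt norm is continuous. Hence the defining conditions are closed, and the bounds $\norm{M_r}_{\op}\le2\alpha$ and $S_r\le2\beta$ hold on all of $[a,s^*]$.

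Next, I improve these bounds strictly on $[a,s^*]$. For any $s\in[a,s^*]$, apply \eqref{eq:S-integrated} on $[a,s]$ and use $\norm{M_r}_{\op}\le2\alpha$:
\[
  S_s\le\beta\exp\bigl(10\cdot2\alpha(s-a)\bigr)\le\beta e^{20\alpha(b-a)}<2\beta .
\]
The last inequality is the hypothesis $20\alpha(b-a)<\log2$. Symmetrically, \eqref{eq:Mop-integrated} with $S_r\le2\beta$ gives
\[
  \norm{M_s}_{\op}\le\alpha e^{20\beta(b-a)}<2\alpha .
\]
So on the compact interval $[a,s^*]$ both quantities stay strictly below their thresholds.

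Now suppose $s^*<b$. At $s^*$ both inequalities are strict, so by continuity they persist on $[s^*,s^*+\epsilon]$ for some $\epsilon>0$. This contradicts the maximality of $s^*$. Therefore $s^*=b$, which is \eqref{eq:stability-conclusion}.

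There is no real obstacle here. The only point needing care is integrating the Dini bound \eqref{eq:log-Mop-flow}, and that is already packaged in \cref{cor:integrated-differential}. The continuity facts above ensure the bootstrap set is closed, so the argument goes through.
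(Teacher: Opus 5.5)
Your proposal is correct and follows essentially the paper's argument. The paper phrases the bootstrap through the first time $\sigma$ at which $\norm{M_s}_{\op}=2\alpha$ or $S_s=2\beta$, whereas you take the supremum of the good set. In both versions, the a priori bounds feed into \cref{cor:integrated-differential}, the hypotheses $20\alpha(b-a)<\log2$ and $20\beta(b-a)<\log2$ give strict improvement, and continuity rules out an exit before $b$.
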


\begin{proof}
Let
\[
  \sigma:=\inf\{s\in[a,b]:\norm{M_s}_{\op}=2\alpha\ \text{or}\ S_s=2\beta\},
\]
with $\inf\varnothing=+\infty$.  If $\sigma=+\infty$, the conclusion follows by continuity.

Otherwise, $\sigma\in(a,b]$, and $\norm{M_s}_{\op}\le2\alpha$ and $S_s\le2\beta$ on
$[a,\sigma]$.  By \cref{cor:integrated-differential},
\[
  \log\frac{S_\sigma}{S_a}
  \le20\alpha(\sigma-a)
  <\log2
\]
and
\[
  \log\frac{\norm{M_\sigma}_{\op}}{\norm{M_a}_{\op}}
  \le20\beta(\sigma-a)
  <\log2.
\]
Either equality at $\sigma$ would make the corresponding logarithmic ratio at least $\log2$,
because \eqref{eq:stability-initial} holds.  This is a contradiction.  Thus no exit occurs,
proving \eqref{eq:stability-conclusion}.
\end{proof}

\begin{lemma}[Endpoint estimate]\label{lem:endpoint-estimate}
There are universal constants $\varepsilon_*,C_*,C_{\mathrm{sp}}>0$ with the following property.
If $T>0$ and $q$ satisfy
\begin{equation}\label{eq:endpoint-hypotheses}
  q\ge C_{\mathrm{sp}}\max\{1,H(T),T\sqrt n\},
  \qquad
  q^4\le n,
  \qquad
  Tq\le\varepsilon_*,
\end{equation}
then
\begin{equation}\label{eq:endpoint-conclusion}
  \norm{M_T}_{\op}\le C_*q^2,
  \qquad
  S_T\le C_*\sqrt n.
\end{equation}
\end{lemma}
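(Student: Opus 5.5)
The plan is to combine \cref{cor:spectral-profile} at the earlier time $T_-=T(1-1/q)$ with \cref{lem:short-time-stability} on the interval $[T_-,T]$, whose length is $T-T_-=T/q$.

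\textbf{Earlier-time bounds.} Since $C_{\mathrm{sp}}\ge1$ can be taken large, the hypothesis $q\ge C_{\mathrm{sp}}\max\{1,H(T)\}$ gives $q\ge\max\{2,H(T)\}$. So \cref{lem:projected-second-moment} and \cref{cor:spectral-profile} apply. They give
\[
  \norm{M_{T_-}}_{\op}\le C_1q^2,
  \qquad
  S_{T_-}\le C_1\sqrt{n+q^4}\le \sqrt2\,C_1\sqrt n,
\]
where the last step uses $q^4\le n$. Set $\alpha:=C_1q^2$ and $\beta:=\sqrt2C_1\sqrt n$.

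\textbf{Checking the smallness conditions.} \cref{lem:short-time-stability} with $a=T_-$ and $b=T$ needs $20\alpha T/q<\log2$ and $20\beta T/q<\log2$. The first reads $20C_1Tq<\log 2$. It follows from $Tq\le\varepsilon_*$ once we choose $\varepsilon_*<\log2/(20C_1)$. The second reads $20\sqrt2C_1T\sqrt n/q<\log2$. Since $q\ge C_{\mathrm{sp}}T\sqrt n$, we have $T\sqrt n/q\le 1/C_{\mathrm{sp}}$, so it suffices to take $C_{\mathrm{sp}}>20\sqrt2C_1/\log2$. This is precisely why the hypothesis $q\gtrsim T\sqrt n$ appears: the Hilbert--Schmidt norm $S\asymp\sqrt n$ drives the growth of $\norm{M}_{\op}$, so the window $T/q$ must be shorter than $1/\sqrt n$.

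\textbf{Conclusion.} \cref{lem:short-time-stability} then yields $\norm{M_T}_{\op}\le2C_1q^2$ and $S_T\le2\sqrt2C_1\sqrt n$. This gives \eqref{eq:endpoint-conclusion} with $C_*=2\sqrt2C_1$.

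There is no real obstacle here. The only care needed is the order of choosing constants: first fix $C_1$ from \cref{cor:spectral-profile}, then choose $C_{\mathrm{sp}}$ and $\varepsilon_*$ from it. We must also confirm that the $H(T)$ part of the hypothesis on $q$ is exactly what \cref{lem:projected-second-moment} requires, which holds since $C_{\mathrm{sp}}\ge1$.
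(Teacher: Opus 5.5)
Your proposal is correct and matches the paper's proof: it bounds $M_{T_-}$ via \cref{cor:spectral-profile} with $q^4\le n$, then applies \cref{lem:short-time-stability} on $[T_-,T]$ of length $T/q$, where $Tq\le\varepsilon_*$ and $q\ge C_{\mathrm{sp}}T\sqrt n$ give the two smallness conditions. One small point: securing $q\ge2$ needs $C_{\mathrm{sp}}\ge2$ rather than $C_{\mathrm{sp}}\ge1$, but your choice of a large $C_{\mathrm{sp}}$ already covers this.
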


\begin{proof}
Set $T_-=T(1-1/q)$.  Choose $C_{\mathrm{sp}}\ge2$.  Then
\cref{cor:spectral-profile} and $q^4\le n$ give, for a universal $C_0$,
\[
  \norm{M_{T_-}}_{\op}\le C_0q^2,
  \qquad
  S_{T_-}\le2C_0\sqrt n.
\]
The remaining interval has length $T-T_-=T/q$.

Apply \cref{lem:short-time-stability} with
\[
  \alpha=C_0q^2,
  \qquad
  \beta=2C_0\sqrt n,
  \qquad
  a=T_-,
  \qquad
  b=T.
\]
The first smallness condition follows from
\[
  \alpha(b-a)=C_0Tq\le C_0\varepsilon_*.
\]
The second follows from
\[
  \beta(b-a)
  =2C_0\sqrt n\,\frac Tq
  \le\frac{2C_0}{C_{\mathrm{sp}}}.
\]
Choose $\varepsilon_*$ small enough that $20C_0\varepsilon_*<\log2$, and
$C_{\mathrm{sp}}$ large enough that $40C_0/C_{\mathrm{sp}}<\log2$.  Thus the smallness
assumptions of \cref{lem:short-time-stability} hold, and its conclusion proves
\eqref{eq:endpoint-conclusion}, after enlarging $C_*$.
\end{proof}

\subsection{Entropy control}\label{sec:upper-entropy}

The parameter $q$ in \cref{lem:endpoint-estimate} depends on $H(T)$, while $H'(T)$ is controlled by
$S_T^2$.  A first-contact argument closes this feedback loop.

\begin{lemma}[Entropy barrier]\label{lem:entropy-barrier}
There are universal constants $c,C_H>0$ such that, for all sufficiently large $n$ and every
$0\le t\le c n^{-3/8}$,
\begin{equation}\label{eq:entropy-barrier}
  H(t)\le C_Hnt^2.
\end{equation}
\end{lemma}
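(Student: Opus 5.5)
We argue by a first-contact (barrier) argument that feeds \cref{lem:endpoint-estimate} into the entropy identity \eqref{eq:entropy-derivative}. Fix a large constant $C_H$, to be chosen below. Since $H(0)=0$, $H'(0)=0$ and $H''(0)=\Var_0(\norm{X}^2)\le 10S_0^2=10n$ by \eqref{eq:var-energy}, the inequality $H(t)\le C_Hnt^2$ holds on a short initial interval $(0,\delta]$ once $C_H>5$. This needs a little care because both sides vanish at $0$. We handle it by a direct Taylor bound: by \cref{lem:short-time-stability} with $a=0$, $\alpha=1$ and $\beta=\sqrt n$, we have $S_s\le 2\sqrt n$ for $s\le c_0 n^{-1/2}$. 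Hence $H'(s)\le 40ns$ there, and integrating gives $H(t)\le 20nt^2$ on $[0,c_0n^{-1/2}]$. We take $C_H\ge 40$.

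Now let
\[
  \tau:=\inf\{t\in[0,cn^{-3/8}]:H(t)>C_Hnt^2\},
\]
and suppose for contradiction that $\tau<cn^{-3/8}$. By continuity $H(\tau)=C_Hn\tau^2$, $\tau\ge c_0n^{-1/2}$, and $H(t)\le C_Hnt^2$ on $[0,\tau]$. We apply \cref{lem:endpoint-estimate} at every $T\in[c_0n^{-1/2}/2,\tau]$ (or at $T=\tau$ only, as needed) with
\[
  q:=C_{\mathrm{sp}}\max\{1,C_HnT^2,T\sqrt n\}.
\]
We check the hypotheses \eqref{eq:endpoint-hypotheses}. First, $q\ge C_{\mathrm{sp}}H(T)$ by the barrier on $[0,\tau]$. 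Second, since $T\le cn^{-3/8}$, we have $nT^2\le c^2n^{1/4}$ and $T\sqrt n\le cn^{1/8}$, so $q\lesssim C_Hc^2n^{1/4}$ and $q^4\le n$ provided $c$ is small relative to $C_H,C_{\mathrm{sp}}$. Third, $Tq\lesssim C_HnT^3+T+T^2\sqrt n\lesssim C_Hc^3n^{-1/8}+o(1)\le\varepsilon_*$ for large $n$. This is exactly where the exponent $3/8$ comes from: we need both $(nT^2)^4\le n$ and $nT^3\ll1$, and $T\asymp n^{-3/8}$ makes the first tight. The lemma then yields $S_T\le C_*\sqrt n$.

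From $S_T\le C_*\sqrt n$ on $[0,\tau]$ (combining the initial interval with the endpoint estimate), \eqref{eq:entropy-derivative} and \eqref{eq:var-energy} give $H'(s)\le 10C_*^2ns$. Integrating,
\[
  H(\tau)\le 5C_*^2n\tau^2<C_Hn\tau^2
\]
once $C_H>5C_*^2$. This contradicts $H(\tau)=C_Hn\tau^2$ only weakly, since first contact requires $H$ to exceed the barrier just after $\tau$. To make the contradiction rigorous, we either apply the same bound on $[0,\tau+h]$ for small $h$, or note that the integrated bound holds with strict margin, $H\le\tfrac12C_Hnt^2$, on all of $[0,\tau]$. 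The latter contradicts equality at $\tau$ directly.

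The main obstacle is the circularity: the endpoint lemma needs $q\ge C_{\mathrm{sp}}H(T)$ at each $T$ where we want $S_T\lesssim\sqrt n$. We need that bound on the whole interval $[0,\tau]$, not just at $\tau$, which is why the argument runs over all $T\le\tau$, where the barrier hypothesis is already available. We must also make sure that the choice of $c$ (small) is compatible with $C_H$ (large) and the universal constants $C_{\mathrm{sp}},\varepsilon_*,C_*$. This works because $C_H$ is fixed first from $C_*$, and $c$ is then chosen depending on $C_H$; the constant $C_*$ does not depend on $C_H$.
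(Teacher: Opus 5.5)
Your proof is correct and follows the paper's first-contact argument: the same choice $q=C_{\mathrm{sp}}\max\{1,H(T),T\sqrt n\}$, the same check of $q^4\le n$ and $Tq\le\varepsilon_*$ from $T\le cn^{-3/8}$, and the same closing step through $H'(t)=t\Var_t(\norm{X}^2)\le 10tS_t^2$ with $C_H>5C_*^2$. The differences are minor. The paper gets the barrier near $0$ from the limit $H(t)/(nt^2)\to\Var_0(\norm{X}^2)/(2n)\le 4$ (Letwin with $A=I_n$) instead of short-time stability. It also reaches the contradiction by comparing $H'(T)\ge 2C_HnT$ with $H'(T)\le 10C_*^2nT$ at the contact point alone, rather than integrating over $[0,\tau]$. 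Your worry about a ``weak'' contradiction is unnecessary: the strict bound $H(\tau)\le 5C_*^2n\tau^2<C_Hn\tau^2$ already contradicts $H(\tau)=C_Hn\tau^2$, since $\tau>0$.
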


\begin{proof}
Since $\mu_0$ is isotropic, \cref{thm:letwin-quadratic} with $A=I_n$ gives
$\Var_0(\norm{X}^2)\le8n$.  By \eqref{eq:entropy-derivative} and continuity of the variance,
\begin{equation}\label{eq:entropy-initial}
  \lim_{t\downarrow0}\frac{H(t)}{nt^2}
  =\frac{\Var_0(\norm{X}^2)}{2n}
  \le4.
\end{equation}

Fix the constants in \cref{lem:endpoint-estimate}.  Choose
$\kappa>\max\{5,5C_*^2\}$ and set $F(t):=H(t)-\kappa nt^2$.  By
\eqref{eq:entropy-initial}, $F$ is negative on a punctured right neighborhood of zero.

Suppose that $F$ first vanishes at some $T\in(0,c n^{-3/8}]$.  Then
$H(T)=\kappa nT^2$ and $F'(T)\ge0$.  Set
\begin{equation}\label{eq:q-barrier}
  q:=C_{\mathrm{sp}}\max\{1,H(T),T\sqrt n\}.
\end{equation}
We verify that $q^4\le n$ and $Tq\le\varepsilon_*$.  Since
$T\le c n^{-3/8}$ and $H(T)=\kappa nT^2$,
\begin{equation}\label{eq:contact-term-bounds}
  H(T)\le\kappa c^2n^{1/4},
  \qquad
  T\sqrt n\le c n^{1/8}.
\end{equation}
Hence
\[
  q\le C_{\mathrm{sp}}
  \left(1+\kappa c^2n^{1/4}+c n^{1/8}\right).
\]
Choose $c$ small enough and then $n$ large enough that the right-hand side is at most
$n^{1/4}$.  This proves $q^4\le n$.

Moreover,
\begin{align*}
  Tq
  &\le C_{\mathrm{sp}}\left(T+\kappa nT^3+T^2\sqrt n\right)\\
  &\le C_{\mathrm{sp}}\left(
    c n^{-3/8}
    +\kappa c^3n^{-1/8}
    +c^2n^{-1/4}
  \right).
\end{align*}
The right-hand side is at most $\varepsilon_*$ after increasing the lower bound on $n$.

These bounds verify the remaining hypotheses of \cref{lem:endpoint-estimate}, so
$S_T\le C_*\sqrt n$.
By \eqref{eq:entropy-derivative} and \eqref{eq:var-energy},
\begin{equation}\label{eq:H-derivative-contact}
  H'(T)
  =T\Var_T(\norm{X}^2)
  \le10TS_T^2
  \le10C_*^2nT.
\end{equation}
On the other hand, $F'(T)\ge0$ implies $H'(T)\ge2\kappa nT$.  This contradicts
\eqref{eq:H-derivative-contact} because $\kappa>5C_*^2$.  Thus no first contact exists, and
\eqref{eq:entropy-barrier} follows with $C_H=\kappa$.
\end{proof}

\begin{proposition}[Small-precision covariance bound]\label{prop:small-t-upper}
There is a universal constant $c>0$ such that, for all sufficiently large $n$ and all
$0\le t\le c n^{-3/8}$,
\begin{equation}\label{eq:small-t-upper}
  \norm{\Sigma_t}_{\op}\le\norm{M_t}_{\op}\le C(1+n^2t^4).
\end{equation}
\end{proposition}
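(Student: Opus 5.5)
The plan is to combine the entropy barrier (\cref{lem:entropy-barrier}) with the endpoint estimate (\cref{lem:endpoint-estimate}), applied at the target time $T=t$ itself. The first inequality $\norm{\Sigma_t}_{\op}\le\norm{M_t}_{\op}$ is immediate from $\Sigma_t=M_t-m_tm_t^{\mathsf T}\preceq M_t$, so only the bound on $\norm{M_t}_{\op}$ needs work.

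First dispose of $t=0$: there $M_0=I_n$ by isotropy, so the bound holds with any $C\ge1$. For $0<t\le c n^{-3/8}$, with $c$ no larger than the constant in \cref{lem:entropy-barrier}, that lemma gives $H(t)\le C_Hnt^2$. Set
\[
  q:=C_{\mathrm{sp}}\max\{1,C_Hnt^2,t\sqrt n\}.
\]
Then $q\ge C_{\mathrm{sp}}\max\{1,H(t),t\sqrt n\}$, which is the first hypothesis in \eqref{eq:endpoint-hypotheses}.

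The remaining two hypotheses are checked exactly as in the proof of \cref{lem:entropy-barrier}. From $t\le cn^{-3/8}$ we get $nt^2\le c^2n^{1/4}$ and $t\sqrt n\le cn^{1/8}$. Hence $q\le C_{\mathrm{sp}}(1+C_Hc^2n^{1/4}+cn^{1/8})\le n^{1/4}$ once $c$ is small and $n$ is large, which gives $q^4\le n$. Similarly,
\[
  tq\le C_{\mathrm{sp}}\bigl(cn^{-3/8}+C_Hc^3n^{-1/8}+c^2n^{-1/4}\bigr)\le\varepsilon_*
\]
for $n$ large. This step requires shrinking $c$ relative to the constant of \cref{lem:entropy-barrier} and enlarging the threshold on $n$. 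That is harmless, since both lemmas remain valid under these adjustments.

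\cref{lem:endpoint-estimate} then yields $\norm{M_t}_{\op}\le C_*q^2$. To finish, note that
\[
  q^2\lesssim 1+n^2t^4+nt^2,
\]
and $nt^2\le\tfrac12(1+n^2t^4)$ by AM--GM. Therefore $\norm{M_t}_{\op}\le C(1+n^2t^4)$.

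There is no real obstacle in this argument; all the analytic content sits in the lemmas already established. The only point needing care is the bookkeeping of constants. The constant $c$ must be fixed after $C_{\mathrm{sp}},\varepsilon_*,C_H$ (which are universal), and "sufficiently large $n$" is then chosen last, so that the hypotheses of both \cref{lem:entropy-barrier} and \cref{lem:endpoint-estimate} hold simultaneously.
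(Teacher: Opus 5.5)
Your proposal is correct and follows the paper's proof essentially verbatim. You apply \cref{lem:entropy-barrier} at the target time and pick $q$ of order $1+nt^2$; the paper takes $q=C_1(1+nt^2)$ and absorbs $t\sqrt n\le(1+nt^2)/2$, while you use an explicit maximum. You then verify $q^4\le n$ and $tq\le\varepsilon_*$ by shrinking $c$ and enlarging $n$, and conclude from \cref{lem:endpoint-estimate} that $\norm{M_t}_{\op}\le C_*q^2\lesssim 1+n^2t^4$, with the same ordering of constants as the paper.
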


\begin{proof}
Let $c$ be no larger than the constant in \cref{lem:entropy-barrier}, decreasing it below if
necessary.  Then that lemma applies throughout $0\le t\le c n^{-3/8}$.
At $t=0$, isotropy gives $\Sigma_0=M_0=I_n$, so the claim is immediate.  Fix
$0<t\le c n^{-3/8}$ and choose
\[
  q:=C_1(1+nt^2),
\]
where $C_1\ge C_{\mathrm{sp}}\max\{1,C_H\}$ is a sufficiently large universal constant.

By \cref{lem:entropy-barrier}, $H(t)\le C_Hnt^2$.  The inequality
$t\sqrt n\le(1+nt^2)/2$ also gives
\[
  q\ge C_{\mathrm{sp}}\max\{1,H(t),t\sqrt n\}.
\]
For $t\le c n^{-3/8}$,
\[
  q\le C_1(1+c^2n^{1/4}),
  \qquad
  tq\le C_1(t+nt^3).
\]
After decreasing $c$ if needed and taking $n$ sufficiently large, these estimates imply
$q^4\le n$ and $tq\le\varepsilon_*$.  Thus \cref{lem:endpoint-estimate} applies.

It follows that
\[
  \norm{M_t}_{\op}\le C_*q^2
  \le C(1+nt^2)^2
  \le C(1+n^2t^4).
\]
Finally, $M_t-\Sigma_t=m_tm_t^{\mathsf T}\succeq0$, so
$\norm{\Sigma_t}_{\op}\le\norm{M_t}_{\op}$.
\end{proof}

\subsection{Completion of the proof}\label{sec:upper-completion}

\begin{proof}[Proof of \cref{thm:upper-general}]
It suffices to prove the result for all sufficiently large $n$.  Indeed,
$\E_t[\norm{X}^2]$ is nonincreasing by \eqref{eq:flow-basic}, and hence
\[
  \norm{\Sigma_t}_{\op}
  \le\Tr(\Sigma_t)
  \le\E_t[\norm{X}^2]
  \le\E_0[\norm{X}^2]
  =n.
\]
Enlarging the universal constant therefore covers the finitely many remaining dimensions.  Let
$c$ be the constant in \cref{prop:small-t-upper}.

Set
\begin{equation}\label{eq:tstar}
  t_*:=n^{-2/5}.
\end{equation}
Since $t_*/(c n^{-3/8})=c^{-1}n^{-1/40}$, we have $t_*\le c n^{-3/8}$ for all sufficiently
large $n$.

If $0\le t\le t_*$, \cref{prop:small-t-upper} gives
\[
  \norm{\Sigma_t}_{\op}
  \le C(1+n^2t_*^4)
  =C(1+n^{2/5})
  \le Cn^{2/5}.
\]
If $t\ge t_*$, the Brascamp--Lieb estimate \eqref{eq:BL} gives
\[
  \norm{\Sigma_t}_{\op}
  \le\frac1{2t}
  \le\frac1{2t_*}
  =\frac12n^{2/5}.
\]
This proves \eqref{eq:upper-logconcave}.
\end{proof}

Taking $\mu_0$ to be uniform on an isotropic convex body yields $Q_n\le Cn^{2/5}$.

\section{The lower bound}\label{sec:lower}

We construct an unconditional isotropic body and a Gaussian tilt with axial variance of order
\(n^{2/5}\).  The raw body couples one distinguished coordinate to the total quadratic energy of
the other coordinates.  Moderate deviations determine its raw axial scale; after diagonal
isotropization, a tilt at the matching precision preserves a constant fraction of every transverse
slice in a Gaussian-scale axial window.

A set \(E\subset\R^m\) is called \emph{unconditional} if, for every
\(x=(x_1,\ldots,x_m)\in E\) and \(\varepsilon=(\varepsilon_1,\ldots,\varepsilon_m)\in\{-1,1\}^m\),
one has \((\varepsilon_1x_1,\ldots,\varepsilon_mx_m)\in E\).  Thus each coordinate sign can be
changed independently.  Throughout this section, we set \(d:=n-1\).  We use raw coordinates
\((x,\lambda)\in\R^d\times\R\) and write \((y,z)\) for the corresponding coordinates after
isotropization.  For positive quantities, we write \(f\asymp g\) when the ratio is bounded above
and below by positive constants independent of \(d\); dependence on a fixed parameter such as
\(s_0\) is allowed.  The symbols \(\lesssim\) and \(\gtrsim\) denote the corresponding one-sided
bounds.

\begin{theorem}[Lower bound for covariance inflation]\label{thm:lower-bound}
There are universal constants \(c>0\) and \(n_0\) such that, for every \(n\ge n_0\), one can choose an
unconditional isotropic convex body \(K_n\subset\R^n\) and a precision \(t_n>0\) satisfying
\[
  \norm{\Cov(\mu_{K_n,t_n})}_{\op}\ge c n^{2/5},
\]
where
\[
  \dd\mu_{K,t}(x)
  \propto
  e^{-t\norm{x}^2}\1_K(x)\,\dd x.
\]
\end{theorem}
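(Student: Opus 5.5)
We follow the construction sketched in the technique overview. Set \(d=n-1\), \(\Delta_d:=d^{3/5}\), \(r_d:=\Delta_d^2/d=d^{1/5}\), and take the raw body
\[
  K_d^{(0)}=\left\{(x,\lambda)\in[-\sqrt3,\sqrt3]^d\times\R:\ \norm{x}^2+2\Delta_d\lvert\lambda\rvert\le d-2\Delta_d\right\}.
\]
It is convex, since it is the intersection of a cube with a sublevel set of the convex function \(\norm{x}^2+2\Delta_d\lvert\lambda\rvert\), and it is unconditional. Its uniform measure is centered, with diagonal covariance \(\diag(a,\dots,a,b)\) by symmetry. Thus the diagonal map \((x,\lambda)\mapsto(a^{-1/2}x,b^{-1/2}\lambda)\) produces an unconditional isotropic body \(K_n\). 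The plan has three steps: (1) show \(b\asymp r_d^{-2}\) and \(a\asymp1\); (2) compute the \(z\)-marginal of the tilt at \(t_d=Ar_d^{-2}\); (3) show this marginal has variance \(\gtrsim t_d^{-1}\).

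\textbf{Step 1 (raw moments).} Let \(V_i\) be i.i.d.\ uniform on \([-\sqrt3,\sqrt3]\), so \(\E V_i^2=1\). Define
\[
  G_d(s):=\Pp\Bigl(\sum_{i\le d}(V_i^2-1)\le-2\Delta_d(1+s)\Bigr).
\]
The \(\lambda\)-marginal of \(\mu_{K^{(0)}}\) has density proportional to \(G_d(\lvert\lambda\rvert)\). The moderate-deviation (Cram\'er--Petrov) expansion gives
\[
  \log G_d(s)=-\frac{\Delta_d^2(1+s)^2}{2\sigma^2d}\,(1+o(1)),\qquad \sigma^2=\Var(V^2),
\]
uniformly for bounded \(s\), with the error term controlled because \(\Delta_d\ll d\). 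We also need the relative statement
\[
  \frac{G_d(s)}{G_d(0)}=\exp\bigl(-c\,r_d s(1+O(s))+o(1)\bigr),
\]
which yields exponential decay on the scale \(r_d^{-1}\). We do not need the decay to be sharp for large \(s\); it suffices that beyond \(s\ge s_0\) the ratio is at most \(e^{-c r_d s}\). This follows from monotonicity of \(G_d\) combined with the exponential tilting (Chernoff) bound at the optimizing parameter for \(s=0\). Hence \(\int\lambda^2G_d/\int G_d\asymp r_d^{-2}\), i.e.\ \(b\asymp r_d^{-2}\).

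For \(a\), note that conditioned on the rare event the \(V_i\) are exponentially tilted with parameter \(\theta\asymp\Delta_d/d\to0\). Their law is therefore close to uniform, and a conditional-limit argument shows \(\E[V_1^2\mid\text{event}]=1-O(\Delta_d/d)\), so \(a\asymp1\). We could also avoid the conditional limit entirely: the cube constraint gives \(a\le3\), and for the lower bound we only need \(a\) bounded below, which follows from the same tilting comparison.

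\textbf{Step 2 (tilted marginal).} In isotropic coordinates the tilt is \(e^{-t(\norm{y}^2+z^2)}\), with \(y=a^{-1/2}x\) and \(z=b^{-1/2}\lambda\). Integrating out \(y\) shows that the \(z\)-marginal has density proportional to \(e^{-tz^2}p_d(z)\). Here \(p_d(z)\) is the probability, under the product measure on the cube with density proportional to \(e^{-(t/a)x_i^2}\), that
\[
  \norm{x}^2\le d-2\Delta_d\bigl(1+b^{1/2}\lvert z\rvert\bigr).
\]
Under this product measure \(\E x_i^2=1-c't/a+O(t^2)\), so the mean energy drops by \(\asymp d\,t_d=A d^{3/5}=A\Delta_d\). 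Take \(\lvert z\rvert\le t_d^{-1/2}\). Then \(b^{1/2}\lvert z\rvert\lesssim r_d^{-1}\cdot r_dA^{-1/2}=A^{-1/2}\) is bounded, so the required deficit is \(\le 3\Delta_d\). Choosing \(A\) large, Hoeffding (fluctuations of order \(\sqrt d\ll\Delta_d\)) gives \(p_d(z)\ge1/2\) on this window. Everywhere else we use only \(p_d\le1\).

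\textbf{Step 3 (variance).} The marginal density \(\propto e^{-tz^2}p_d(z)\) is symmetric. Its mass satisfies
\[
  \int e^{-tz^2}p_d\ \le\ \sqrt{\pi/t}.
\]
Its second moment is at least
\[
  \frac12\int_{t^{-1/2}/2\le\lvert z\rvert\le t^{-1/2}}z^2e^{-tz^2}\,\dd z\ \gtrsim\ t^{-3/2}.
\]
Hence \(\Var(Z)\gtrsim t_d^{-1}\asymp d^{2/5}\asymp n^{2/5}\), and the operator norm of the covariance dominates this.

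The main obstacle is Step 1: making the moderate-deviation asymptotics of \(G_d\) quantitative enough, specifically a relative ratio \(G_d(s)/G_d(0)\) valid uniformly in \(s\) with tails, to conclude \(b\asymp r_d^{-2}\) in both directions. I would handle this directly with exponential tilting at parameter \(\theta_d\asymp\Delta_d/d\) together with a local CLT under the tilted law, rather than citing the Petrov expansion as a black box.
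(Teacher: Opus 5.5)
Your proposal is correct and follows essentially the same route as the paper. It uses the same raw body with $\Delta_d=d^{3/5}$, a Cram\'er--Petrov comparison of $G_d(s)/G_d(0)$ on the scale $r_d^{-1}$ giving $b\asymp r_d^{-2}$, an energy deficit of order $dt_d=A\Delta_d$ plus Hoeffding giving $p_d(z)\ge\frac12$ on $\lvert z\rvert\le t_d^{-1/2}$, and the Gaussian-window variance bound.

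The differences are cosmetic. For $s\ge s_0$ you use a Chernoff bound at the $s=0$ tilting parameter, where the paper uses Hoeffding with $s_0$ chosen so that $\frac89(1+s_0)^2>2/\sigma^2$. For $a\gtrsim1$ you use a conditioning argument, where the paper uses a Hoeffding bound on $\{\norm{V}^2\le d/2\}$. Also, the exponent in $\log G_d(s)$ should be $2r_d(1+s)^2/\sigma^2$ rather than $r_d(1+s)^2/(2\sigma^2)$; this is a harmless constant slip.
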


In particular, the construction gives \(Q_n\ge c n^{2/5}\) for all sufficiently large \(n\).

\subsection{Construction and slice estimates}
\label{sec:lower-slices}

We begin with the raw body.  The transverse cube is \([-\sqrt3,\sqrt3]^d\); the choice of
\(\sqrt3\) makes the quadratic energy of a uniform point have mean \(d\).  Write
\(\Delta_d:=d^{3/5}\) for the deviation scale and
\(r_d:=\Delta_d^2/d=d^{1/5}\) for the corresponding moderate-deviation rate.

The relations \(\Delta_d/\sqrt d=d^{1/10}\to\infty\) and
\(\Delta_d/d=d^{-2/5}\to0\) place the construction in the moderate-deviation regime.  We use the
following raw body.
\begin{equation}\label{eq:lower-raw-body}
  K_d^{(0)}
  :=
  \left\{
    (x,\lambda)\in[-\sqrt3,\sqrt3]^d\times\R:
    \norm{x}^2+2\Delta_d|\lambda|
    \le d-2\Delta_d
  \right\}.
\end{equation}

The following elementary observation records the geometry and symmetries of the raw body.

\begin{lemma}[Geometry and symmetry of the raw body]\label{lem:lower-geometry}
For all sufficiently large \(d\), \(K_d^{(0)}\) is a compact, full-dimensional, unconditional
convex body.  Its uniform law is centered.  There exist \(a,b>0\) such that the covariance has the
form:
\begin{equation*}
  \Cov\bigl(\operatorname{Unif}(K_d^{(0)})\bigr)
  =\diag(aI_d,b)
\end{equation*}
\end{lemma}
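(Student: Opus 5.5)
The plan is to verify each property directly from the defining inequality \eqref{eq:lower-raw-body}. I would write $K_d^{(0)}$ as the intersection of the cube-slab $[-\sqrt3,\sqrt3]^d\times\R$ with the sublevel set $\{g\le d-2\Delta_d\}$, where $g(x,\lambda):=\norm{x}^2+2\Delta_d|\lambda|$.

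\emph{Convexity and closedness.} $g$ is convex as a sum of convex functions, so its sublevel set is convex and closed. Intersecting with the convex closed slab preserves both properties.

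\emph{Boundedness.} Every point satisfies $|\lambda|\le (d-2\Delta_d)/(2\Delta_d)$ and $x\in[-\sqrt3,\sqrt3]^d$, so the body is compact.

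\emph{Full-dimensionality.} For large $d$ we have $2\Delta_d=2d^{3/5}<d$, so $d-2\Delta_d>0$. A small neighborhood of the origin then satisfies the strict inequalities, which gives nonempty interior. Hence $K_d^{(0)}$ is a convex body.

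\emph{Unconditionality.} $g$ depends only on $x_i^2$ and $|\lambda|$, and the cube is sign-symmetric. So flipping the sign of any coordinate (including $\lambda$) maps $K_d^{(0)}$ to itself.

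\emph{Centering and the covariance structure.} I use the invariance of the uniform law $\mu$ under each reflection $R_i$ that negates coordinate $i$. This gives $\E[X_i]=-\E[X_i]=0$, so the law is centered. For $i\neq j$, applying $R_i$ gives $\E[X_iX_j]=-\E[X_iX_j]=0$, so the covariance is diagonal.

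The body is also invariant under permutations of the $x$-coordinates, since both the cube and $\norm{x}^2$ are symmetric. Therefore $\E[X_i^2]$ takes a common value $a$ for $1\le i\le d$. Set $b:=\E[\lambda^2]$.

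Positivity of $a$ and $b$ holds because a full-dimensional body has a nondegenerate uniform law. Concretely, each coordinate is nonzero on a set of positive measure.

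There is no real obstacle here. The only point needing care is the requirement "sufficiently large $d$", which is needed only so that $d-2\Delta_d>0$ and the body is nonempty and full-dimensional. The quantitative sizes of $a$ and $b$ are deferred to the later moderate-deviation estimates.
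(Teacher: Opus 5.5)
Your proposal is correct and follows essentially the same route as the paper: convexity of the sublevel set intersected with the cube, an interior neighborhood of the origin once $d-2\Delta_d>0$, boundedness from the cube and the $|\lambda|$ constraint, and the coordinate sign and transverse permutation symmetries giving centering, a diagonal covariance, and a common transverse variance. You also justify $a,b>0$ explicitly, using the fact that coordinate hyperplanes have measure zero; the paper leaves this step implicit.
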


\begin{proof}
The function \((x,\lambda)\mapsto\norm{x}^2+2\Delta_d|\lambda|\) is convex, so its sublevel set
intersected with the cube is convex.  Since
\(d-2\Delta_d>0\) for large \(d\), the origin is an interior point.  The cube bounds \(x\), and
the constraint bounds \(\lambda\), so the set is compact and full-dimensional.  The defining
condition is invariant under every coordinate sign change and under permutations of the
transverse coordinates.  These symmetries imply centering, vanishing cross-covariances, and a
common variance in the transverse coordinates.
\end{proof}

For \(s\ge0\), the transverse slice of \(K_d^{(0)}\) at
\(\lvert\lambda\rvert=s\) is determined by
\(\norm{x}^2\le d-2\Delta_d(1+s)\).  Let
\(V_1,\ldots,V_d\) be independent and uniform on \([-\sqrt3,\sqrt3]\), and write
\(W_i:=V_i^2\).  Then \(\E W_i=1\) and \(\sigma^2:=\Var(W_i)=4/5\).  Since the uniform
measure on the cube is normalized Lebesgue measure, the relative volume of this slice is
\begin{equation}\label{eq:lower-slice-volume}
  G_d(s):=
  \Pp\left\{
    \sum_{i=1}^d(W_i-1)
    \le -2\Delta_d(1+s)
  \right\},
  \qquad s\ge0.
\end{equation}
The Euclidean volume of the slice is therefore \((2\sqrt3)^dG_d(s)\).  It is zero for
\(s>L_d\), where
\(L_d:=d/(2\Delta_d)-1=\frac12d^{2/5}-1\le\frac12d^{2/5}\) for large \(d\).

We next estimate \(G_d\) locally and in the tail; integrating these estimates gives the zeroth
and second axial moments.

For the local estimate, we use the following consequence of Petrov's Cram\'er-series expansion.
We state it explicitly to fix the normalization.

\begin{theorem}[Cram\'er--Petrov moderate deviations {\cite[Chapter~VIII, Theorem~1, p.~218]{Petrov75}}]\label{thm:lower-cramer-petrov}
Let \(Y_1,Y_2,\ldots\) be i.i.d. random variables with \(\E Y_1=0\),
\(\Var(Y_1)=\sigma^2>0\), and \(\E e^{\theta Y_1}<\infty\) for all \(\theta\) in a neighborhood of zero.
Let \(\Phi\) denote the standard normal distribution function.  For every nonnegative sequence
\(u_d=o(d^{1/6})\),
\[
    \sup_{0\le z\le u_d}
  \left|
    \frac{
      \Pp\left\{\sum_{i=1}^d Y_i\le-\sigma\sqrt d\,z\right\}
    }{1-\Phi(z)}
    -1
  \right|
  \longrightarrow 0.
\]
\end{theorem}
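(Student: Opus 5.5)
We reduce to the upper tail by applying the statement to $-Y_i$. Then we prove it by the classical Cram\'er exponential-tilting argument, keeping every error term uniform over $0\le z\le u_d$.

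\textbf{Setup.} Put $S_d=\sum_{i\le d}Y_i$ and $\Lambda(h)=\log\E e^{hY_1}$, which is analytic near $0$ with $\Lambda(0)=\Lambda'(0)=0$ and $\Lambda''(0)=\sigma^2$. For $x:=\sigma z/\sqrt d$, which tends to $0$ uniformly because $u_d=o(d^{1/6})$, let $h=h(x)$ solve $\Lambda'(h)=x$. Let $\Pp_h$ be the tilted law $\dd\Pp_h\propto e^{hY}\dd\Pp$ applied coordinatewise, and set $\sigma_h^2:=\Lambda''(h)$. Then
\[
  \Pp\{S_d\ge dx\}
  =e^{-d(hx-\Lambda(h))}\,
  \E_h\!\left[e^{-h(S_d-dx)}\1_{\{S_d\ge dx\}}\right].
\]

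\textbf{Step 1 (exponent).} Expanding analytically gives $h=x/\sigma^2+O(x^2)$ and $hx-\Lambda(h)=x^2/(2\sigma^2)+O(x^3)$. Hence
\[
  d\bigl(hx-\Lambda(h)\bigr)=\frac{z^2}{2}+O\!\left(\frac{z^3}{\sqrt d}\right)=\frac{z^2}{2}+o(1)
\]
uniformly in the range. This is exactly where the restriction $u_d=o(d^{1/6})$ enters: the Cram\'er-series correction $z^3/\sqrt d$ is negligible.

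\textbf{Step 2 (tilted expectation).} Under $\Pp_h$ the normalized sum $U:=(S_d-dx)/(\sigma_h\sqrt d)$ is centered with unit variance. The tilted third absolute moments are bounded uniformly for $|h|$ small, so Berry--Esseen gives $\sup_u|\Pp_h(U\le u)-\Phi(u)|\le C/\sqrt d$ uniformly in $h$. Put $\theta:=h\sigma_h\sqrt d$. Integrating by parts in $\E_h[e^{-\theta U}\1_{\{U\ge0\}}]$ against the distribution function yields
\[
  \E_h\!\left[e^{-\theta U}\1_{\{U\ge0\}}\right]=\psi(\theta)+O(d^{-1/2}),
  \qquad
  \psi(\theta):=\int_0^\infty e^{-\theta u}\varphi(u)\,\dd u ,
\]
where $\varphi=\Phi'$ is the standard normal density. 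We have $\psi(\theta)\asymp(1+\theta)^{-1}$, so the relative error is $O((1+z)/\sqrt d)=o(1)$. Moreover $\theta=z(1+O(x))$, so $|\theta-z|=O(z^2/\sqrt d)$. Since $|\psi'|\lesssim\psi/(1+\theta)$, this gives $\psi(\theta)/\psi(z)=1+O(z^2/\sqrt d)=1+o(1)$.

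\textbf{Step 3 (comparison).} Use the exact identity $1-\Phi(z)=\int_0^\infty\varphi(z+u)\,\dd u=e^{-z^2/2}\psi(z)$. Combining it with Steps 1 and 2 shows that the ratio in the statement is $e^{o(1)}(1+o(1))$ uniformly on $0\le z\le u_d$, which proves the claim.

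The main obstacle is uniformity. We need Berry--Esseen uniformly over the family of tilted laws, and we need relative rather than absolute error bounds when $z$ is large, since the main term decays like $e^{-z^2/2}/z$. I would handle the first through continuity in $h$ of the tilted moments near $0$. For the second, I would compare the Berry--Esseen error $O(d^{-1/2})$ with $\psi(\theta)\gtrsim 1/(1+z)$, which is the step that requires $z=o(\sqrt d)$; this is weaker than the $o(d^{1/6})$ needed in Step 1.
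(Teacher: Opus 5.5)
Your proof is correct. The paper gives no argument of its own for this statement. It imports it from Petrov's Chapter~VIII, Theorem~1, which is Cram\'er's full expansion $\Pp\{S_d\le-\sigma\sqrt d\,z\}/(1-\Phi(z))=\exp\{-\tfrac{z^3}{\sqrt d}\lambda(-\tfrac{z}{\sqrt d})\}\bigl(1+O(\tfrac{1+z}{\sqrt d})\bigr)$ for $0\le z=o(\sqrt d)$, where $\lambda$ is the Cram\'er series. The displayed form follows because $\lambda$ is bounded near $0$ and $z^3/\sqrt d\to0$ uniformly on $[0,u_d]$.

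Your argument is the conjugate-distribution proof of that theorem: Esscher tilt to mean $x$, Berry--Esseen uniformly over the tilted laws, integration by parts against $e^{-\theta u}$, and the Mills-ratio identity $1-\Phi(z)=e^{-z^2/2}\psi(z)$. The difference is that you truncate at first order. Instead of expanding the Legendre transform $hx-\Lambda(h)$ into the full Cram\'er series, you keep only its quadratic term with an $O(x^3)$ remainder, which is exactly what the range $z=o(d^{1/6})$ permits. This buys a self-contained proof with no series, at the price of covering only $z=o(d^{1/6})$ instead of $z=o(\sqrt d)$. That smaller range is all the paper needs, since in \cref{lem:lower-local-moderate} the relevant arguments satisfy $z_{d,s}\asymp d^{1/10}$.

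A minor point: because $|\psi'/\psi|\lesssim(1+\theta)^{-1}$, your comparison of $\psi(\theta)$ with $\psi(z)$ actually has relative error $O(z/\sqrt d)$. Your $O(z^2/\sqrt d)$ already suffices, though.
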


\begin{lemma}[Local moderate-deviation estimate]\label{lem:lower-local-moderate}
For every fixed \(s_0>0\), uniformly for \(0\le s\le s_0\),
\begin{equation}\label{eq:lower-local-moderate}
  G_d(s)
  \asymp
  r_d^{-1/2}
  \exp\left\{-\frac{2}{\sigma^2}r_d(1+s)^2\right\}.
\end{equation}
\end{lemma}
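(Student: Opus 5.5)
We apply \cref{thm:lower-cramer-petrov} to \(Y_i:=W_i-1\). These are i.i.d., centered, bounded (so they have exponential moments of all orders), and have variance \(\sigma^2=4/5\). Put
\[
  z_d(s):=\frac{2\Delta_d(1+s)}{\sigma\sqrt d}.
\]
Then \(G_d(s)=\Pp\{\sum_i Y_i\le-\sigma\sqrt d\,z_d(s)\}\). Since \(\Delta_d/\sqrt d=d^{1/10}\), we have \(z_d(s)=(2/\sigma)(1+s)d^{1/10}\).

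The plan has three steps. First, for fixed \(s_0\), set \(u_d:=(2/\sigma)(1+s_0)d^{1/10}\). Because \(1/10<1/6\), this gives \(u_d=o(d^{1/6})\), and \(z_d(s)\in[0,u_d]\) for all \(0\le s\le s_0\). The theorem therefore yields
\[
  G_d(s)=\bigl(1+o(1)\bigr)\bigl(1-\Phi(z_d(s))\bigr)
\]
uniformly in \(s\in[0,s_0]\). In particular \(G_d(s)\asymp 1-\Phi(z_d(s))\) for all large \(d\), with constants \(1/2\) and \(2\).

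Second, use the standard Mills-ratio bounds
\[
  \frac{z}{1+z^2}\,\frac{e^{-z^2/2}}{\sqrt{2\pi}}\le 1-\Phi(z)\le\frac{1}{z}\,\frac{e^{-z^2/2}}{\sqrt{2\pi}},\qquad z>0.
\]
For \(z\ge1\), these give \(1-\Phi(z)\asymp z^{-1}e^{-z^2/2}\) with absolute constants. Here \(z_d(s)\ge(2/\sigma)d^{1/10}\to\infty\), so this regime applies.

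Third, evaluate both factors at \(z_d(s)\):
\[
  \frac{z_d(s)^2}{2}=\frac{2\Delta_d^2(1+s)^2}{\sigma^2 d}=\frac{2}{\sigma^2}\,r_d(1+s)^2,
\]
and \(z_d(s)=(2/\sigma)(1+s)r_d^{1/2}\), since \(r_d^{1/2}=d^{1/10}\). Because \(1\le 1+s\le 1+s_0\), we get \(z_d(s)^{-1}\asymp r_d^{-1/2}\), with constants depending only on \(s_0\), as the convention on \(\asymp\) permits. Combining the three steps gives \eqref{eq:lower-local-moderate}.

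There is no serious obstacle. The only delicate point is that Petrov's theorem is stated for a fixed range \(u_d\) and a supremum over \(z\le u_d\); we need uniformity in \(s\), which holds because the whole family \(\{z_d(s):s\le s_0\}\) lies inside \([0,u_d]\). One must also keep the prefactor \(z^{-1}\) rather than absorb it: the exact exponent is retained, and only the polynomial prefactor is determined up to constants.
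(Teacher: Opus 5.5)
Your proposal is correct and follows essentially the same route as the paper. Both apply Cram\'er--Petrov uniformly on $[0,u_d]$ with $u_d\asymp d^{1/10}=o(d^{1/6})$, then use the Mills-ratio bounds and the identities $z_d(s)^2/2=(2/\sigma^2)r_d(1+s)^2$ and $z_d(s)\asymp r_d^{1/2}$ for $s\in[0,s_0]$.
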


\begin{proof}
Set \(Y_i:=W_i-1\).  Then \(\E Y_i=0\), \(\Var(Y_i)=\sigma^2\), and \(Y_i\in[-1,2]\), so the
moment-generating function is finite for every real argument.  For
\(z_{d,s}:=2\Delta_d(1+s)/(\sigma\sqrt d)\), let
\(z_d^\star:=\sup_{0\le s\le s_0}z_{d,s}\).  Since
\(z_d^\star\asymp d^{1/10}=o(d^{1/6})\), \cref{thm:lower-cramer-petrov} gives
\[
  G_d(s)=\bigl(1-\Phi(z_{d,s})\bigr)(1+o(1))
\]
uniformly for \(0\le s\le s_0\).  Writing
\(\varphi(z):=(2\pi)^{-1/2}e^{-z^2/2}\), the standard Gaussian tail bounds give
\[
  \frac{z}{1+z^2}\varphi(z)
  \le 1-\Phi(z)
  \le \frac{\varphi(z)}{z},
  \qquad z>0.
\]
Since \(z_{d,s}\to\infty\) uniformly on \([0,s_0]\), the preceding relation and these inequalities give
\[
  G_d(s)\asymp z_{d,s}^{-1}e^{-z_{d,s}^2/2}
\]
uniformly for \(0\le s\le s_0\).  Since \(z_{d,s}^{-1}\asymp r_d^{-1/2}\) and
\(z_{d,s}^2/2=(2/\sigma^2)r_d(1+s)^2\), uniformly on the same interval,
\eqref{eq:lower-local-moderate} follows.
\end{proof}

The local estimate is uniform only for bounded \(s\).  The following global bound controls the
remaining tail and makes its contribution to the moment integrals negligible.

\begin{lemma}[Global slice tail]\label{lem:lower-slice-tail}
For every \(s\ge0\),
\begin{equation}\label{eq:lower-slice-tail}
  G_d(s)
  \le
  \exp\left\{-\frac89r_d(1+s)^2\right\}.
\end{equation}
\end{lemma}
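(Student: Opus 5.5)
The bound follows from Hoeffding's inequality applied to the bounded, centered summands \(Y_i:=W_i-1\in[-1,2]\), each with range of length \(3\). For any deviation level \(u\ge0\), Hoeffding gives
\[
  \Pp\Bigl\{\sum_{i=1}^d Y_i\le -u\Bigr\}
  \le \exp\Bigl\{-\frac{2u^2}{\sum_{i=1}^d(2-(-1))^2}\Bigr\}
  =\exp\Bigl\{-\frac{2u^2}{9d}\Bigr\}.
\]
The plan is to substitute \(u=2\Delta_d(1+s)\), which is exactly the threshold in the definition \eqref{eq:lower-slice-volume} of \(G_d(s)\).

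This substitution yields the exponent
\[
  \frac{2\cdot 4\Delta_d^2(1+s)^2}{9d}=\frac89\,\frac{\Delta_d^2}{d}(1+s)^2=\frac89\,r_d(1+s)^2,
\]
since \(r_d=\Delta_d^2/d\). This is precisely \eqref{eq:lower-slice-tail}. The estimate holds for every \(s\ge0\) and every \(d\), with no asymptotics needed.

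For \(s>L_d\) there is nothing extra to check: the event in \eqref{eq:lower-slice-volume} is then empty, since \(\sum_i(W_i-1)\ge -d\) while \(-2\Delta_d(1+s)<-d\). So \(G_d(s)=0\), and the bound holds trivially, consistently with the general estimate.

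There is no real obstacle. The only step needing care is getting the Hoeffding constant from the correct range length \(b_i-a_i=3\), rather than using a variance-based (Bernstein) bound. That constant is what produces the \(8/9\), which is weaker than the sharp local rate \(2/\sigma^2=5/2\) from \cref{lem:lower-local-moderate}. The weaker constant is harmless, because \eqref{eq:lower-slice-tail} is used only to make tail contributions negligible relative to the local estimate once \(s\) exceeds a large fixed \(s_0\).
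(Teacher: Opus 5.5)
Your proposal is correct and matches the paper's proof: Hoeffding's inequality for summands with range length $3$, evaluated at $u=2\Delta_d(1+s)$, gives the exponent $\frac{8}{9}\,\Delta_d^2(1+s)^2/d=\frac89 r_d(1+s)^2$. Your remarks that $G_d(s)=0$ for $s>L_d$ and that $\frac89<2/\sigma^2=\frac52$ are accurate but not needed for the bound.
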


\begin{proof}
Recall that \(W_i=V_i^2\in[0,3]\).  Hoeffding's inequality for independent variables in intervals
of length \(3\) gives
\(\Pp\{\sum_{i=1}^d(W_i-1)\le-u\}\le\exp\{-2u^2/(9d)\}\) for \(u\ge0\).  Substituting
\(u=2\Delta_d(1+s)\) and using \(\Delta_d^2/d=r_d\) proves the claim.
\end{proof}

The local and global bounds now determine the two axial moments.  The case \(\ell=0\) gives the
total slice mass, while \(\ell=2\) gives its second moment.

\begin{lemma}[Slice moment integrals]\label{lem:lower-slice-integrals}
For \(\ell\in\{0,2\}\) and all sufficiently large \(d\),
\begin{equation}\label{eq:lower-slice-integrals}
  \int_0^\infty s^\ell G_d(s)\,\dd s
  \asymp
  G_d(0)r_d^{-(\ell+1)}.
\end{equation}
\end{lemma}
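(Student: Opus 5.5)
The plan is to split the integral at a fixed cutoff \(s_0\) and handle the two ranges with different tools. On \([0,s_0]\) I will use \cref{lem:lower-local-moderate}, and on \([s_0,\infty)\) I will use \cref{lem:lower-slice-tail}.

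\textbf{Local range, both directions.} By \cref{lem:lower-local-moderate} at \(s=0\), \(G_d(0)\asymp r_d^{-1/2}e^{-2r_d/\sigma^2}\). Applying the lemma again on \([0,s_0]\) gives, uniformly,
\[
  \frac{G_d(s)}{G_d(0)}\asymp \exp\Bigl\{-\frac{2r_d}{\sigma^2}\bigl((1+s)^2-1\bigr)\Bigr\}=\exp\Bigl\{-\frac{2r_d}{\sigma^2}(2s+s^2)\Bigr\}.
\]
For \(s\le s_0\) we have \(2s\le 2s+s^2\le(2+s_0)s\). So this ratio is sandwiched between \(e^{-c_1r_ds}\) and \(e^{-c_2r_ds}\), with constants depending only on \(s_0\). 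Integrating \(s^\ell\) against these exponentials and substituting \(u=r_ds\) gives
\[
  \int_0^{s_0}s^\ell e^{-c\,r_ds}\,\dd s=r_d^{-(\ell+1)}\int_0^{s_0r_d}u^\ell e^{-cu}\,\dd u\asymp r_d^{-(\ell+1)},
\]
since \(s_0r_d\to\infty\). This settles the lower bound directly, because the full integral dominates the piece over \([0,s_0]\). It also gives the upper bound on that piece.

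\textbf{Tail range, upper bound only.} Here the task is to show
\[
  \int_{s_0}^\infty s^\ell G_d(s)\,\dd s=o\bigl(G_d(0)r_d^{-3}\bigr).
\]
By \cref{lem:lower-slice-tail}, \(G_d(s)\le e^{-\frac89 r_d(1+s)^2}\le e^{-\frac89 r_d(1+s_0)^2}e^{-\frac89 r_d(s-s_0)^2}\). The second factor integrates against \(s^\ell\) to at most polynomially many powers of \((1+s_0)\). Alternatively, the integrand vanishes for \(s>L_d\lesssim d^{2/5}=r_d^2\), so \(\int_{s_0}^{L_d}s^\ell\,\dd s\le r_d^{6}\) crudely. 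Meanwhile \(G_d(0)r_d^{-3}\ge e^{-(2/\sigma^2)r_d}r_d^{-4}\), where \(2/\sigma^2=5/2\).

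The main point is therefore the exponent comparison: we need \(\frac89(1+s_0)^2>\frac52\). This holds once \((1+s_0)^2>45/16\), e.g.\ \(s_0=1\), which gives \(\frac{32}{9}>\frac52\). The tail is then bounded by \(\mathrm{poly}(r_d)\,e^{-\frac{32}{9}r_d}\). Since \(r_d=d^{1/5}\to\infty\), this is \(o\bigl(e^{-\frac52 r_d}r_d^{-4}\bigr)\), which is negligible relative to the local contribution.

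Combining the two ranges yields \eqref{eq:lower-slice-integrals} for all sufficiently large \(d\), with \(s_0=1\) fixed so that the constants are universal. The only delicate step is checking that Hoeffding's rate \(8/9\) at \(s_0=1\) beats the Cramér rate \(5/2\) at \(s=0\). Everything else is routine substitution.
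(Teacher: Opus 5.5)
Your proposal is correct and follows essentially the same route as the paper. You split at a fixed $s_0$ with $\frac89(1+s_0)^2>\frac{2}{\sigma^2}=\frac52$, sandwich $G_d(s)/G_d(0)$ between $e^{-Cr_ds}$ and $e^{-cr_ds}$ on $[0,s_0]$ via the local moderate-deviation estimate, and discard the tail via the Hoeffding bound. The only cosmetic difference is that you bound the tail integral crudely by $\mathrm{poly}(r_d)\,e^{-\frac89 r_d(1+s_0)^2}$ (using the support bound $L_d\lesssim r_d^2$ or the Gaussian factor) instead of by integration by parts, which works equally well because the strict exponent gap absorbs any polynomial factor.
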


\begin{proof}
Choose a fixed \(s_0>0\) such that
\(\frac89(1+s_0)^2>\frac2{\sigma^2}\).  For \(0\le s\le s_0\), comparing
\eqref{eq:lower-local-moderate} at \(s\) and at \(0\) gives
\[
  G_d(s)\asymp G_d(0)
  \exp\left\{-\frac2{\sigma^2}r_d(2s+s^2)\right\}.
\]
On this interval, \(2s\le2s+s^2\le(2+s_0)s\).  Hence, for suitable positive constants
\(c_0,C_0\) depending only on \(s_0\),
\(G_d(0)e^{-C_0r_ds}\le G_d(s)\le G_d(0)e^{-c_0r_ds}\).
For large \(d\), \(r_d^{-1}\le s_0\).  Integrating the lower bound over \([0,r_d^{-1}]\) and
the upper bound over \([0,\infty)\) gives the same order, \(G_d(0)r_d^{-(\ell+1)}\).  Thus
\begin{equation*}
  \int_0^{s_0}s^\ell G_d(s)\,\dd s
  \asymp G_d(0)r_d^{-(\ell+1)}.
\end{equation*}

For the remaining range, the global tail bound gives, for \(\ell\in\{0,2\}\),
\begin{equation*}
\begin{aligned}
  \int_{s_0}^\infty s^\ell G_d(s)\,\dd s
  &\le \int_{s_0}^\infty s^\ell e^{-\frac89r_d(1+s)^2}\,\dd s \\
  &=O\left(r_d^{-1}e^{-\frac89r_d(1+s_0)^2}\right).
\end{aligned}
\end{equation*}
The last bound follows by one integration by parts.  The local estimate gives
\(G_d(0)\asymp r_d^{-1/2}e^{-(2/\sigma^2)r_d}\).  Define
\(\eta:=\frac89(1+s_0)^2-\frac2{\sigma^2}>0\).  Then the ratio of the tail integral to
\(G_d(0)r_d^{-(\ell+1)}\) is
\(O(r_d^{\ell+1/2}e^{-\eta r_d})=o(1)\).  Combining the two ranges proves the claim.
\end{proof}

\subsection{Raw covariance and isotropization}\label{sec:lower-isotropization}

The preceding subsection provides the two slice moments needed for the raw covariance.  We first
read off the axial variance from their ratio, then show that the transverse variance stays of
constant order, and finally apply the diagonal scaling that makes the body isotropic.
Let \((X^{(0)},\lambda^{(0)})\sim\operatorname{Unif}(K_d^{(0)})\).  By the sign and
permutation symmetries from \cref{lem:lower-geometry}, this law is centered.  Define
\(a:=\Var(X^{(0)}_1)\) and \(b:=\Var(\lambda^{(0)})\).  Because the law is centered, these are
the corresponding second moments, and \(\Cov\bigl((X^{(0)},\lambda^{(0)})\bigr)=\diag(aI_d,b)\).

The slice ratio gives the narrow axial scale.  It remains to verify that the transverse variance
does not collapse.
\begin{lemma}[Raw covariance scales]\label{lem:lower-raw-scales}
For all sufficiently large \(d\), \(\frac14\le a\le3\).  Moreover,
\begin{equation}\label{eq:lower-raw-axial}
  b
  =
  \frac{\displaystyle\int_0^\infty s^2G_d(s)\,\dd s}
       {\displaystyle\int_0^\infty G_d(s)\,\dd s}
  \asymp r_d^{-2}=d^{-2/5}.
\end{equation}
\end{lemma}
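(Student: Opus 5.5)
The axial formula comes from integrating the uniform density over transverse slices. For each $\lambda$, the slice of $K_d^{(0)}$ at height $\lambda$ has Euclidean volume $(2\sqrt3)^dG_d(|\lambda|)$, and this vanishes for $|\lambda|>L_d$. Fubini therefore gives, for $\ell\in\{0,2\}$,
\[
\int_{K_d^{(0)}}|\lambda|^\ell\,\dd x\,\dd\lambda=2(2\sqrt3)^d\int_0^\infty s^\ell G_d(s)\,\dd s .
\]
Since the law is centered, $b=\E[(\lambda^{(0)})^2]$. Dividing the $\ell=2$ identity by the $\ell=0$ identity, the factors $2(2\sqrt3)^d$ cancel and we obtain the displayed ratio. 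Applying \cref{lem:lower-slice-integrals} to the numerator and to the denominator, the factor $G_d(0)$ cancels and we are left with $b\asymp r_d^{-3}/r_d^{-1}=r_d^{-2}=d^{-2/5}$.

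For the transverse variance, the upper bound $a\le3$ is immediate, since every point satisfies $|x_1|\le\sqrt3$. For the lower bound we use permutation symmetry: $a=\E[\norm{X^{(0)}}^2]/d$. Conditioning on the slice $|\lambda^{(0)}|=s$, the vector $X^{(0)}$ is uniform on the cube intersected with $\{\norm{x}^2\le d-2\Delta_d(1+s)\}$. Thus it has the law of $V=(V_1,\dots,V_d)$ conditioned on the event $E_s:=\{\sum_i(W_i-1)\le-2\Delta_d(1+s)\}$. The conditional mean of $\norm{V}^2$ on $E_s$ is at most $d-2\Delta_d(1+s)$ and we need it to be $\ge d/4$ (say). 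It therefore suffices to bound, for every $s\in[0,L_d]$,
\[
\Pp\{\textstyle\sum_iW_i\le d/2\mid E_s\}\le\tfrac12 .
\]
Given this, the conditional mean is at least $d/4$, and averaging over $s$ gives $a\ge1/4$.

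The main obstacle is this conditional estimate. The difficulty is that $E_s$ is itself a rare event, so a plain Hoeffding bound on $\{\sum W_i\le d/2\}$ cannot simply be divided by $\Pp(E_s)$ when $s$ is close to $L_d$, where $E_s$ is nearly as rare. I will handle it by splitting the range of $s$.

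On the part of the range where $d-2\Delta_d(1+s)\ge d/2$, I would compare Hoeffding for the target event,
\[
\Pp\{\textstyle\sum W_i\le d/2\}\le e^{-d/18},
\]
with a lower bound on $\Pp(E_s)$. For $s\le s_0$ (more generally $s\lesssim d^{1/15}$), \cref{lem:lower-local-moderate} gives $\Pp(E_s)\ge e^{-Cr_d}=e^{-Cd^{1/5}}$. For larger $s$ one can use a crude product bound, e.g.\ forcing each $|V_i|\le\sqrt{1-\theta}$ for a suitable $\theta$, which gives $\Pp(E_s)\ge e^{-C'd\,\theta}$ with $\theta\asymp\Delta_d(1+s)/d$.

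A cleaner route avoids this estimate altogether. I would use a monotone coupling/tilting argument: conditioned on $\norm{V}^2\le R$ with $R=d-2\Delta_d(1+s)$, the conditional law should concentrate near the sphere $\norm{V}^2\approx R$. This follows from an exponential-tilt (Cramér) representation: the conditional law is close to the product law tilted by $e^{-\theta W_i}$, whose mean energy equals $R$. In particular $\E[\norm{V}^2\mid E_s]\ge R-O(\sqrt d)$.

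The slices with $R<d/2$ need no separate treatment. They carry a fraction of the total mass that is negligible by \cref{lem:lower-slice-tail}: it is bounded by $e^{-c\,r_d d^{4/5}}$, which is tiny relative to $G_d(0)r_d^{-1}$. These slices can therefore be discarded in the averaging, with a loss of only $o(1)$ in $a$. Combining the two pieces gives $\tfrac14\le a\le3$ for all large $d$.
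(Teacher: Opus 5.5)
\paragraph{Gap in the lower bound $a\ge\frac14$.}
Your derivation of the ratio formula for $b$, the estimate $b\asymp r_d^{-2}$, and the bound $a\le3$ all coincide with the paper's. The gap is in $a\ge\frac14$, for slices with $R=d-2\Delta_d(1+s)\ge d/2$ but $s$ not bounded.

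Your first route does not work in that range, for two reasons.
\begin{enumerate}
\item Forcing every $|V_i|\le\sqrt{1-\theta}$ has probability $\bigl((1-\theta)/3\bigr)^{d/2}\le 3^{-d/2}$. This is exponentially \emph{smaller} than $e^{-d/18}$; it is not $e^{-C'd\theta}$.
\item The sufficient condition $\Pp\{\sum_iW_i\le d/2\mid E_s\}\le\frac12$ is false when $R$ is at or just above $d/2$. At $R=d/2$ the event $E_s$ coincides with the target event, so the conditional probability equals $1$. By continuity it stays close to $1$ for $R$ slightly larger. No lower bound on $\Pp(E_s)$ can repair this.
\end{enumerate}
Your second route, $\E[\norm{V}^2\mid E_s]\ge R-O(\sqrt d)$ uniformly in $s$, is a genuine conditional large-deviation estimate that you assert but do not prove. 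There is also a smaller issue. If the retained slices only have conditional mean at least $d/4$, discarding an $o(1)$ fraction of mass gives $a\ge\frac14(1-o(1))$, not $a\ge\frac14$, so you need some slack.

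\paragraph{The missing observation.}
You only need per-slice control for bounded $s$. The proof of \cref{lem:lower-slice-integrals} already shows $\int_{s_0}^\infty G_d(s)\,\dd s=o\bigl(G_d(0)r_d^{-1}\bigr)$. So all slices with $s>s_0$ can be discarded, exactly as you discard those with $R<d/2$. For $s\le s_0$ your first route works: $\Pp(E_s)\gtrsim r_d^{-1/2}e^{-Cr_d}$ and $e^{-d/18+Cr_d}\to0$. This gives conditional mean $\frac d2(1-o(1))$, which supplies the needed slack.

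The paper is more direct still and never conditions on slices. It bounds the volume of $\{(x,\lambda)\in K_d^{(0)}:\norm{x}^2\le d/2\}$ by $(2\sqrt3)^d r_d^2e^{-d/18}$: Hoeffding's bound times the maximal admissible $\lambda$-length $d/\Delta_d=r_d^2$. It then divides by the total volume $\asymp(2\sqrt3)^dG_d(0)r_d^{-1}$. The resulting fraction is $O\bigl(r_d^{7/2}e^{-d/18+(2/\sigma^2)r_d}\bigr)=o(1)$, so $\E\norm{X^{(0)}}^2\ge\frac d2(1-o(1))\ge\frac d4$.

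Dividing by the total volume, which is dominated by small $s$, rather than by $\Pp(E_s)$ is exactly what removes the obstacle you identified.
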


\begin{proof}
For the axial variance, the slices at \(\lambda^{(0)}=s\) and \(\lambda^{(0)}=-s\) have the same
volume.  The two signs contribute a factor \(2\), and the transverse cube contributes
\((2\sqrt3)^d\); both factors occur in the numerator and denominator and cancel.  Then
\cref{lem:lower-slice-integrals} with \(\ell=0\) and \(\ell=2\) gives the displayed estimate.

For the transverse variance, the upper bound is immediate: \(X^{(0)}_1\in[-\sqrt3,\sqrt3]\) and
the law is centered, so \(a=\E[(X^{(0)}_1)^2]\le3\).  For the lower bound, let
\(V=(V_1,\ldots,V_d)\) be uniform on the transverse cube.  For fixed \(x\), write the length of
the admissible \(\lambda\)-interval as \(\mathcal L_d(x):=\bigl(d-2\Delta_d-\norm{x}^2\bigr)_+/\Delta_d\),
where \(u_+:=\max\{u,0\}\).  Fubini's theorem and
\cref{lem:lower-slice-integrals} give
\begin{equation}\label{eq:lower-volume}
  \frac{\operatorname{vol}(K_d^{(0)})}{(2\sqrt3)^d}
  =
  \E\mathcal L_d(V)
  =
  2\int_0^\infty G_d(s)\,\dd s
  \asymp G_d(0)r_d^{-1}.
\end{equation}
Since \(\norm{V}^2=\sum_{i=1}^dW_i\) with \(W_i=V_i^2\in[0,3]\), Hoeffding's inequality gives
\(\Pp\{\norm{V}^2\le d/2\}\le e^{-d/18}\).  Also,
\(\mathcal L_d(x)\le d/\Delta_d=r_d^2\).  Therefore the volume of the portion with
\(\norm{x}^2\le d/2\) satisfies
\begin{equation*}
  \frac{\operatorname{vol}\bigl(\{(x,\lambda)\in K_d^{(0)}:\norm{x}^2\le d/2\}\bigr)}{(2\sqrt3)^d}
  =\E\!\left[\mathcal L_d(V)\1_{\{\norm{V}^2\le d/2\}}\right]
  \le r_d^2 e^{-d/18}.
\end{equation*}
Dividing by \eqref{eq:lower-volume}, the fraction of the whole body in this region is at most
\begin{equation*}
  O\!\left(r_d^{7/2}
  \exp\left\{-\frac d{18}+\frac{2}{\sigma^2}r_d\right\}\right)
  =o(1),
\end{equation*}
where we used \eqref{eq:lower-local-moderate} at \(s=0\); the last relation follows from
\(r_d=d^{1/5}\).  Consequently,
\[
  \E\norm{X^{(0)}}^2\ge \frac d2(1-o(1))\ge\frac d4.
\]
Finally, permutation symmetry gives \(\E\norm{X^{(0)}}^2=da\), so the preceding inequality implies
\(a\ge1/4\) for all sufficiently large \(d\).
\end{proof}

The preceding lemma shows that the transverse standard deviation is \(\sqrt a\asymp1\), whereas
the axial standard deviation is \(\sqrt b\asymp r_d^{-1}\).  Divide each coordinate by its own
standard deviation by setting \(T_d:=\diag(a^{-1/2}I_d,b^{-1/2})\) and
\(K_d:=T_dK_d^{(0)}\).

\begin{corollary}[Diagonal isotropization]\label{cor:lower-isotropization}
The body \(K_d\) is an unconditional isotropic convex body, and \(b^{-1/2}\asymp r_d\).  If
\((x,\lambda)\) denotes raw coordinates and \((y,z)\) the corresponding coordinates in \(K_d\),
then \((y,z)=(a^{-1/2}x,b^{-1/2}\lambda)\), equivalently
\((x,\lambda)=(\sqrt a\,y,\sqrt b\,z)\).
\end{corollary}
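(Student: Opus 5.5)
The corollary follows from \cref{lem:lower-geometry} and \cref{lem:lower-raw-scales}; the plan is to check each clause in turn.

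\textbf{Convexity and unconditionality.} By \cref{lem:lower-geometry}, $K_d^{(0)}$ is a compact, full-dimensional, unconditional convex body. The map $T_d=\diag(a^{-1/2}I_d,b^{-1/2})$ is an invertible linear map, since $a,b>0$. Hence $K_d=T_dK_d^{(0)}$ is again a compact convex body with nonempty interior. Because $T_d$ is diagonal, it commutes with every coordinate sign change $\diag(\varepsilon_1,\ldots,\varepsilon_n)$. Therefore unconditionality passes from $K_d^{(0)}$ to $K_d$.

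\textbf{Isotropy.} The linear image of a uniform law under an invertible map is the uniform law on the image body, because the Jacobian of $T_d$ is constant. So if $(X^{(0)},\lambda^{(0)})\sim\operatorname{Unif}(K_d^{(0)})$, then $T_d(X^{(0)},\lambda^{(0)})\sim\operatorname{Unif}(K_d)$.

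This image vector is centered, since the raw law is centered by \cref{lem:lower-geometry}. Its covariance is
\[
  T_d\,\diag(aI_d,b)\,T_d^{\mathsf T}=I_n,
\]
using that $T_d$ is symmetric. Hence $K_d$ is isotropic.

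\textbf{Coordinates and scale.} The coordinate relations $(y,z)=(a^{-1/2}x,\,b^{-1/2}\lambda)$ are just the definition of $T_d$, and inverting them gives $(x,\lambda)=(\sqrt a\,y,\sqrt b\,z)$. The estimate $b\asymp r_d^{-2}$ from \cref{lem:lower-raw-scales} gives $b^{-1/2}\asymp r_d$ after taking inverse square roots, which preserves two-sided bounds with constants.

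There is no real obstacle. The one point needing care is that the uniform law is preserved under the linear map, which holds because the Jacobian is constant.
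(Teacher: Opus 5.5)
Your proposal is correct and follows the paper's proof: both read off the raw covariance $\diag(aI_d,b)$ with $a\asymp1$ and $b\asymp r_d^{-2}$ from the two preceding lemmas, compute $T_d\diag(aI_d,b)T_d^{\mathsf T}=I_{d+1}$, and note that a positive diagonal scaling preserves convexity, centering, and the coordinate sign symmetries. Your remark that the constant Jacobian makes the pushforward equal to the uniform law on $K_d$ is a detail the paper leaves implicit.
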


\begin{proof}
By \cref{lem:lower-geometry,lem:lower-raw-scales}, the raw covariance is
\(\diag(aI_d,b)\), with \(a\asymp1\) and \(b\asymp r_d^{-2}\).  Hence
\[
  \Cov\bigl(T_d(X^{(0)},\lambda^{(0)})\bigr)
  =T_d\diag(aI_d,b)T_d^{\mathsf T}=I_{d+1}.
\]
The transformed law remains centered, and positive diagonal scaling preserves convexity and all
coordinate sign symmetries.  Thus \(K_d\) is unconditional and isotropic.
\end{proof}

\subsection{The tilted axial marginal}\label{sec:lower-tilted-marginal}

By the preceding subsection, \(K_d\) is isotropic and \(b^{-1/2}\asymp r_d\).  Let \(A\ge1\) be a
constant to be chosen below and set \(t_d:=A r_d^{-2}\).  Then \(d t_d=A\Delta_d\) and
\(\sqrt b\,t_d^{-1/2}\asymp A^{-1/2}\).  The key point is that the transverse tilt lowers the
mean quadratic energy by order \(\Delta_d\).  We use this to control the slice mass throughout
the window \(|z|\le t_d^{-1/2}\), and then integrate the axial marginal.

For \(t>0\), consider the tilted law \(\mu_{K_d,t}\).  Under the change of variables
\(x=\sqrt a\,y\) and \(\lambda=\sqrt b\,z\), its quadratic exponent separates as
\begin{equation}
  t\norm{(y,z)}^2=t\left(\frac{\norm{x}^2}{a}+z^2\right).
\end{equation}
Thus the transverse tilt is a product measure before imposing the slice constraint.  At fixed
\(z\), that constraint is
\begin{equation}\label{eq:lower-slice-constraint}
  \norm{x}^2\le d-2\Delta_d\bigl(1+\sqrt b\,|z|\bigr).
\end{equation}

For \(\tau\ge0\), let \(U_\tau\) have density proportional to
\(e^{-\tau u^2}\) on \([-\sqrt3,\sqrt3]\).  Set \(\tau_d:=t_d/a\).  The bounds \(1/4\le a\le3\) imply
\(t_d/3\le\tau_d\le1\) for all sufficiently large \(d\).  Let
\(U_1,\ldots,U_d\) be independent copies of \(U_{\tau_d}\).  Since
\(\sqrt b\asymp r_d^{-1}\), fix a universal \(B>0\) with \(\sqrt b\le B r_d^{-1}\), and define
\begin{equation}
  p_d(z):=
  \Pp\left\{
    \sum_{i=1}^dU_i^2
    \le d-2\Delta_d\bigl(1+\sqrt b\,|z|\bigr)
  \right\}.
\end{equation}
Thus \(p_d(z)\) is the probability that an independent tilted transverse sample satisfies the
slice constraint at height \(z\).

\begin{lemma}[Uniform tilted slice mass]\label{lem:lower-slice-acceptance}
There is a universal constant \(A_0>0\) such that, whenever \(A\ge A_0\) and \(d\) is sufficiently
large,
\begin{equation}\label{eq:lower-slice-acceptance}
  \inf_{|z|\le t_d^{-1/2}}p_d(z)
  \ge 1-e^{-2r_d/9}
  \ge \frac12.
\end{equation}
\end{lemma}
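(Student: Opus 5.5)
We prove the bound with a one-sided Hoeffding inequality, once we know that the tilted transverse energy has mean smaller than $d$ by an amount of order $A\Delta_d$.

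\textbf{Step 1: the energy drop under the tilt.} Let $e(\tau):=\E[U_\tau^2]$. At $\tau=0$ this is the uniform law on $[-\sqrt3,\sqrt3]$, so $e(0)=1$. The identity \eqref{eq:flow-basic} in one dimension gives $e'(\tau)=-\Var(U_\tau^2)$. For $\tau\in[0,1]$, the density of $U_\tau$ relative to the uniform law lies between $e^{-3}$ and $e^{3}$. Hence $\Var(U_\tau^2)\ge c_1$ for a universal $c_1>0$; for instance, compare the masses of $U_\tau^2$ near $0$ and near $3$. Integrating,
\[
  e(\tau_d)\le 1-c_1\tau_d\le 1-\frac{c_1t_d}{3}.
\]
Therefore
\[
  \E\sum_{i=1}^dU_i^2\le d-\frac{c_1}{3}dt_d=d-\frac{c_1A}{3}\Delta_d,
\]
using $dt_d=A\Delta_d$.

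\textbf{Step 2: the worst-case constraint in the window.} For $|z|\le t_d^{-1/2}$ we have
\[
  \sqrt b\,|z|\le Br_d^{-1}A^{-1/2}r_d=BA^{-1/2}.
\]
So the right-hand side of the slice constraint is at least $d-2\Delta_d(1+BA^{-1/2})$. Fix $A_0\ge1$ so large that $\frac{c_1A}{3}\ge 2(1+B)+1$ for all $A\ge A_0$. Since $A\ge1$, this gives
\[
  \frac{c_1A}{3}-2\bigl(1+BA^{-1/2}\bigr)\ge1.
\]
Consequently the threshold exceeds the mean by at least $\Delta_d$, uniformly on the window.

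\textbf{Step 3: Hoeffding.} The variables $U_i^2\in[0,3]$ are independent. Hoeffding's inequality, as in \cref{lem:lower-slice-tail}, gives
\[
  \Pp\Bigl\{\sum_i U_i^2-\E\sum_i U_i^2\ge\Delta_d\Bigr\}\le\exp\Bigl\{-\frac{2\Delta_d^2}{9d}\Bigr\}=e^{-2r_d/9}.
\]
By Step 2, the event $\{\sum_iU_i^2\le d-2\Delta_d(1+\sqrt b\,|z|)\}$ contains the complement of this deviation event. Hence $p_d(z)\ge1-e^{-2r_d/9}$ for every $|z|\le t_d^{-1/2}$. This is at least $1/2$ once $d$ is large, because $r_d\to\infty$.

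\textbf{Main obstacle.} The only nontrivial input is the uniform lower bound on $\Var(U_\tau^2)$ in Step 1. It is needed because the mean energy must drop by order $A\Delta_d$, and not merely be nonincreasing. The comparison of densities on the bounded range $\tau\le1$ settles it, or one can compute $e(\tau)$ explicitly. The bound $\tau_d\le1$ follows from $t_d\to0$ and $a\ge1/4$.
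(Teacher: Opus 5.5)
Your proposal is correct and follows the paper's proof essentially step for step: the energy drop from $e'(\tau)=-\Var(U_\tau^2)$ with a uniform variance lower bound on $[0,1]$, the window estimate $\sqrt b\,|z|\le B/\sqrt A$ leaving a gap of at least $\Delta_d$ above the tilted mean, and one-sided Hoeffding for variables in $[0,3]$. The only difference is how the uniform variance bound is justified. You compare densities with the uniform law, for instance via $\Var_\nu(f)=\inf_c\E_\nu[(f-c)^2]\ge e^{-3}\Var_0(f)$, whereas the paper cites continuity and positivity of the variance on the compact interval; both are valid.
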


\begin{proof}
Let \(m(\tau):=\E[U_\tau^2]\).  At \(\tau=0\), \(U_0\) is uniform on
\([-\sqrt3,\sqrt3]\), so \(m(0)=1\).  Differentiation under the integral gives
\(m'(\tau)=-\Var(U_\tau^2)\).  The variance is continuous and positive on \([0,1]\); hence
\(m(\tau)\le1-v_0\tau\) on this interval for a universal \(v_0>0\).  Since
\(\tau_d\ge t_d/3\),
\begin{equation*}
  \E\left[\sum_{i=1}^dU_i^2\right]
  \le d-\frac{v_0}{3}d t_d.
\end{equation*}
Since \(d t_d=A\Delta_d\), the deficit from \(d\) is at least
\(v_0A\Delta_d/3\).

For \(|z|\le t_d^{-1/2}\), the choice of \(B\) gives
\(\sqrt b\,|z|\le B/\sqrt A\).  The distance from the slice threshold to the tilted mean is at least
\[
  \bigl[d-2\Delta_d(1+\sqrt b\,|z|)\bigr]
  -\E\left[\sum_{i=1}^dU_i^2\right]
  \ge
  \left(\frac{v_0A}{3}-2-\frac{2B}{\sqrt A}\right)\Delta_d.
\]
Choose \(A_0\) so that the coefficient is at least \(1\) for every \(A\ge A_0\).  Since
\(U_i^2\in[0,3]\), Hoeffding's inequality for independent variables in intervals of length \(3\)
then yields
\(1-p_d(z)\le\exp\{-2\Delta_d^2/(9d)\}=e^{-2r_d/9}\), uniformly on the stated interval.  This
proves the lemma.
\end{proof}

Fix a universal \(A\ge A_0\).  We now integrate out the transverse coordinates.

\begin{corollary}[Axial variance under the tilt]\label{cor:lower-axial-variance}
Let \(Z\) denote the \(z\)-coordinate under \(\mu_{K_d,t_d}\).  The \(z\)-marginal has density
proportional to \(e^{-t_dz^2}p_d(z)\), and there is a universal constant \(c>0\) such that
\begin{equation}\label{eq:lower-axial-variance}
  \Var_{\mu_{K_d,t_d}}(Z)\ge\frac{c}{t_d}\ge c d^{2/5}.
\end{equation}
\end{corollary}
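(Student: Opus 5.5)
First I would derive the marginal formula. In coordinates $(y,z)$, the tilted law $\mu_{K_d,t_d}$ has density proportional to $e^{-t_d(\norm{y}^2+z^2)}\1_{K_d}(y,z)$. Substituting $x=\sqrt a\,y$ (a linear change of variables with constant Jacobian) and using \eqref{eq:lower-slice-constraint}, the membership condition at fixed $z$ is $x\in[-\sqrt3,\sqrt3]^d$ together with $\norm{x}^2\le d-2\Delta_d(1+\sqrt b\,|z|)$. The weight $e^{-t_d\norm{x}^2/a}=\prod_i e^{-\tau_d x_i^2}$ factorizes over coordinates. Hence, by Fubini, the $z$-marginal density is proportional to
\[
  e^{-t_dz^2}\,\mathcal Z_{\tau_d}^d\,p_d(z),
\]
where $\mathcal Z_{\tau_d}$ is the one-dimensional normalizer of $U_{\tau_d}$. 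That normalizer does not depend on $z$, so it is absorbed into the constant, which gives the claimed density $\propto e^{-t_dz^2}p_d(z)$.

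Next comes the variance lower bound. The body is unconditional, so the marginal is symmetric: $\E Z=0$ and $\Var(Z)=\E Z^2$. I write $\Var(Z)=N/D$ with $N=\int z^2e^{-t_dz^2}p_d(z)\,\dd z$ and $D=\int e^{-t_dz^2}p_d(z)\,\dd z$.

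For the denominator, I only use $p_d\le1$, which gives $D\le\sqrt{\pi/t_d}$.

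For the numerator, I restrict to the window $t_d^{-1/2}/2\le|z|\le t_d^{-1/2}$. There \cref{lem:lower-slice-acceptance} gives $p_d(z)\ge1/2$, and $e^{-t_dz^2}\ge e^{-1}$, so
\[
  N\ge 2\cdot\tfrac12 e^{-1}\cdot\tfrac14 t_d^{-1}\cdot\tfrac12 t_d^{-1/2}.
\]
Taking the ratio yields $\Var(Z)\ge c/t_d$ with an explicit universal $c$.

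Finally, $t_d=Ar_d^{-2}=Ad^{-2/5}$ with $A$ a fixed universal constant, so $c/t_d=(c/A)d^{2/5}$. After renaming the constant, this proves \eqref{eq:lower-axial-variance}.

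The only delicate point is the Fubini factorization. It requires that the slice of $K_d$ at height $z$ be exactly the cube intersected with the energy constraint, and that $p_d$ uses the same threshold. Both follow directly from the definition \eqref{eq:lower-raw-body}, so I expect no real obstacle here. The substantive work was already done in \cref{lem:lower-slice-acceptance}.
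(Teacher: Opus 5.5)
Your proof is correct and follows the paper's argument. Fubini with the product structure of the transverse tilt gives the marginal density $e^{-t_dz^2}p_d(z)$, evenness gives $\E Z=0$, and then $p_d\ge 1/2$ on $|z|\le t_d^{-1/2}$ (from \cref{lem:lower-slice-acceptance}) bounds the numerator while $p_d\le 1$ bounds the denominator. The only difference is cosmetic: you bound the numerator pointwise on the annulus $\tfrac12 t_d^{-1/2}\le|z|\le t_d^{-1/2}$, which yields the explicit constant $1/(8e\sqrt\pi)$, whereas the paper substitutes $w=\sqrt{t_d}\,z$ and keeps the Gaussian integral ratio as the constant.
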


\begin{proof}
The Jacobian and the transverse normalizing factor are independent of \(z\), which gives the stated
marginal density.  Since \(p_d\) is even, the marginal is even and \(\E Z=0\).  On
\(|z|\le t_d^{-1/2}\), the preceding lemma gives \(p_d(z)\ge1/2\), while \(p_d(z)\le1\) everywhere.
Consequently,
\[
  \Var(Z)
  \ge
  \frac{\frac12\int_{|z|\le t_d^{-1/2}}z^2e^{-t_dz^2}\,\dd z}
       {\int_{\mathbb R}e^{-t_dz^2}\,\dd z}
  =
  \frac{1}{2t_d}
  \frac{\int_{-1}^{1}w^2e^{-w^2}\,\dd w}
       {\int_{\mathbb R}e^{-w^2}\,\dd w}
  \ge\frac{c}{t_d},
\]
where \(w=\sqrt{t_d}\,z\).  The integral ratio is a positive numerical constant.  Since \(A\) is
fixed and \(t_d=A d^{-2/5}\), after adjusting the universal constant this gives the second
inequality in \eqref{eq:lower-axial-variance}.
\end{proof}

\begin{proof}[Proof of \cref{thm:lower-bound}]
For a given \(n\), set \(d=n-1\), \(K_n:=K_d\), and \(t_n:=t_d\).  By
\cref{cor:lower-isotropization}, \(K_n\) is unconditional and isotropic.  Let \(e_n\) denote the
axial unit vector, and let \(X\sim\mu_{K_n,t_n}\).  The covariance matrix is positive semidefinite,
so its operator norm dominates every unit-vector quadratic form.  In particular,
\[
  \norm{\Cov(\mu_{K_n,t_n})}_{\op}
  \ge e_n^{\mathsf T}\Cov(\mu_{K_n,t_n})e_n
  =\Var_{\mu_{K_n,t_n}}(\langle e_n,X\rangle)
  \ge c d^{2/5}
  \ge c\,2^{-2/5} n^{2/5},
\]
where we used \(d=n-1\ge n/2\) for \(n\ge2\).  Absorbing the fixed factor \(2^{-2/5}\) into
the universal constant completes the proof.
\end{proof}

The scale \(2/5\) in this construction results from balancing the two requirements above.  If
\(\Delta=d^\alpha\) and \(r=\Delta^2/d\), then retaining nontrivial slice mass requires
\(dt\gtrsim\Delta\), while a tilt precision producing an axial window of Gaussian scale requires
\(t\gtrsim r^{-2}\).  Balancing the two conditions gives \(\Delta^5\asymp d^3\), hence
\(\Delta\asymp d^{3/5}\), \(t\asymp d^{-2/5}\), and \(t^{-1}\asymp d^{2/5}\).

\section*{AI-disclosure}
We used ChatGPT 5.6 Pro and GPT 5.6 Sol to assist with brainstorming ideas and exploring proof
strategies, and writing proofs. The tool materially affected Sections 2 to 4. Portions of the manuscript text were redrafted or modified with AI assistance across all
sections. The authors verified the correctness and originality of all content including references.

\bibliographystyle{alphaurl}
\bibliography{main.bib}

\end{document}